\theoremstyle{plain}
  \newtheorem{theorem}{Theorem}[section]
  \newtheorem{proposition}[theorem]{Proposition}
  \newtheorem{lemma}[theorem]{Lemma}
  \newtheorem{corollary}[theorem]{Corollary}
\theoremstyle{definition}
  \newtheorem{example}[theorem]{Example}
  \newtheorem{hypothesis}[theorem]{Hypothesis}
\theoremstyle{remark}
\numberwithin{equation}{section}
\def\tempbaselines
\def\diagram#1{\null\,\vcenter{\tempbaselines
\mathsurround=0pt
    \ialign{\hfil$##$\hfil&&\quad\hfil$##$\hfil\crcr
      \mathstrut\crcr\noalign{\kern-\baselineskip}
  #1\crcr\mathstrut\crcr\noalign{\kern-\baselineskip}}}\,}
\def\clap#1{\hbox to 0pt{\hss$#1$\hss}}
\def\calC{{\mathcal C}}
\def\calI{{\mathcal I}}
\def\calQ{{\mathcal Q}}
\def\calP{{\mathcal P}}
\def\calS{{\mathcal S}}
\def \End{\mathop{\rm End}\nolimits}
\def \Hom{\mathop{\rm Hom}\nolimits}
\def \Rad{\mathop{\rm Rad}\nolimits}
\def \Soc{\mathop{\rm Soc}\nolimits}
\def\ZZ{{\Bbb Z}}
\begin{document}

\title[The Bounded Derived Category of a Poset]
{The Bounded Derived Category of a Poset}

\author{Kosmas Diveris}\email{diveris@stolaf.edu}\address{Department of Mathematics\\
St. Olaf College\\ Northfield, MN 55057, USA} 
\author{Marju Purin}\email{purin@stolaf.edu}\address{Department of Mathematics\\
St. Olaf College\\ Northfield, MN 55057, USA}
\author{Peter Webb}
\email{webb@math.umn.edu}
\address{School of Mathematics\\
University of Minnesota\\
Minneapolis, MN 55455, USA}

\subjclass[2010]{Primary: 16G70; Secondary: 16G20, 18E30}

\keywords{Auslander-Reiten quiver, poset incidence algebra, derived category, Calabi-Yau}

\begin{abstract}
We introduce a new combinatorial condition on a subinterval of a poset $\calP$ (a \textit{clamped} subinterval) that allows us to relate the Auslander-Reiten quiver of the bounded derived category of $\calP$ to that of the subinterval. Applications include the determination of when a poset is fractionally Calabi-Yau and the computation of the Auslander-Reiten quivers of both the bounded derived category and of the module category.
\end{abstract}

\maketitle

\section{Introduction}

We study representations of a finite poset $\calP$ over a field $k$. By this we mean representations of the incidence algebra $k\calP$; they may also be identified as functors from $\calP$ to the category of vector spaces over $k$. 

Such representations have been an object of study since at least the pioneering work of Mitchell \cite{Mit1, Mit2}. A notable result was the determination of the posets of finite representation type by Loupias~\cite{Lou}. More recent work has centered around the abstract properties of these representations, including the question of when the bounded derived category of an incidence algebra is fractionally Calabi-Yau \cite{Cha, HI}. Examples show that such questions do not seem to have a particularly straightforward answer in terms of the combinatorics of the poset. Our main goal in this paper is to introduce a technique that allows us to describe the representation theory of a wide class of posets exactly by such combinatorial means.

We develop a new method, which we call \textit{clamping}, to describe the Auslander-Reiten quiver components of $k\calP$ in both the module category and the derived category. We show that the presence of a subposet of $\calP$ of a kind that we call a \textit{clamped interval} allows us to copy parts of the Auslander-Reiten quiver of the derived category of the clamped interval directly into the Auslander-Reiten quiver for $k\calP$. When the poset is built up entirely from clamped intervals it allows us to compute the shape of the Auslander-Reiten quiver of $D^b(k\calP)$ very quickly, simply from the combinatorics of the poset. In this situation, it turns out that $k\calP$ is derived equivalent to a hereditary algebra (i.e. $k\calP$ is \textit{piecewise hereditary}). In other cases, when we might not have such complete information, the method provides a way to show that certain bounded derived categories are not fractionally Calabi-Yau. 

In the context of representations of finite dimensional algebras, we explore the connection between Auslander-Reiten triangles in the bounded derived category and Auslander-Reiten sequences in the module  category, developing the fundamentals of this relationship. Applying this to posets built up out of clamped intervals, we use the approach to compute the shape of the Auslander-Reiten quiver of the module category. This allows us to construct posets with various properties: for each tree that is a barycentric subdivision, we are able to construct a lattice that is derived equivalent to the path algebra of any quiver that is an orientation of the given tree.  In another example, we exhibit a lattice of finite representation type that is derived equivalent to a hereditary algebra of wild representation type; and we indicate other examples along these lines. The method provides a criterion for a poset constructed out of clamped intervals to have finite representation type, and we use this to list all such posets of finite representation type.

We present the definition and basic properties of clamping in Section 2. The main features are that restriction, induction, and coinduction between the clamped interval and the whole poset behave particularly well, allowing us to copy resolutions from the clamped interval to the whole poset, and also to copy regions of the Auslander-Reiten quiver in a similar way.

In Section 3 we use this in an approach that shows that many posets have bounded derived categories that are not fractionally Calabi-Yau. 

In Section 4 we define the family of posets obtained iteratively out of clamping, denoting the class of posets obtained in this way $\mathcal{IC}$.  We use the results from Section 2 to describe the Auslander-Reiten quiver of the derived category for such posets, using the combinatorics of the poset. We prove that such algebras $k\calP$ are piecewise hereditary and characterize the trees that arise as their ordinary quivers. 

We start Section 5 by establishing a fundamental relationship between Auslander-Reiten triangles in the derived category and  Auslander-Reiten sequences in the module category, in the context of representations of a finite-dimensional algebra. Although the results are straightforward, we have not so far found them in the literature, and they have broader application than this paper.
This, along with clamping theory, allows us to compute  Auslander-Reiten quiver components of the module categories of $\mathcal{IC}$ poset algebras.

We conclude in Section 6 by determining the Auslander-Reiten quivers of $\mathcal{IC}$ posets of finite representation type. 

We assume familiarity with the methods of Auslander-Reiten theory, including the construction of Auslander-Reiten sequences and triangles. For the module category we refer to the books \cite{ASS, ARS} for this, and for triangulated categories we refer to \cite{Hap}. For general aspects of representations of categories, and posets in particular, we refer to \cite{Web1} for background.

\section{Clamped intervals}

We consider representations of a finite poset $\calP$ over a field $k$ and introduce the notation we will use. 
By a representation of $\calP$ over $k$ we  mean a functor from $\calP$ to the category of $k$-vector spaces, regarding $\calP$ as a category. Such representations may be identified as modules for the \textit{category algebra} (see~\cite{Web1}) or \textit{incidence algebra} of $\calP$, which we denote by $k\calP$.
Whenever $\calQ$ is a subposet of $\calP$ that is \textit{convex} (meaning that $a,b\in \calQ$ and $a\le x\le b$ imply $x\in \calQ$) we let $k_\calQ$ be the representation of $\calP$ that takes the value $k$ on every element of $\calQ$ and that is 0 elsewhere, with identity maps corresponding to each comparison in $\calQ$.
If $M$ is a representation of $\calP$ the \textit{support} of $M$ is the set of elements $x\in\calP$ for which $M(x)\ne 0$; thus the support of $k_\calQ$ is $\calQ$.
The fact that $k_\calQ$ is a representation is part of the general observation that whenever $U$ is a $k\calQ$-module we may regard it also as a $k\calP$-module with support in $\calQ$ by letting morphisms of $\calP$ that are not in $\calQ$ act as zero. 
Thus $k_\calQ$ is the \textit{constant functor} on $\calQ$, regarded as a $k\calP$-module in this way. 

For each element $x\in\calP$ there is an indecomposable projective representation of $\calP$ generated by its value at $x$, namely the representable functor at $x$. We will write it as $P_x$, or as $P_x^\calP$ if we wish to emphasize the poset of which this is a representation. 
The corresponding indecomposable injective representation with socle at $x$ is denoted $I_x$ or $I_x^\calP$. Thus $P_x=k_{[x,\infty)}$ and $I_x=k_{(-\infty,x]}$.

For each subposet $\calQ$ of $\calP$ we define functors
$$
\uparrow_\calQ^\calP:k\calQ\hbox{-mod}\to k\calP\hbox{-mod}
\quad\hbox{and}\quad
\Uparrow_\calQ^\calP:k\calQ\hbox{-mod}\to k\calP\hbox{-mod}
$$
by $U\uparrow_\calQ^\calP=k\calP\otimes_{k\calQ}U$ and $U\Uparrow_\calQ^\calP=\Hom_{k\calQ}(k\calP,U)$. These are the \textit{induction} and \textit{coinduction} to $\calP$ of representations of $\calQ$. We will also make use of the corresponding functors between bounded derived categories
$$
\uparrow_\calQ^\calP:D^b(k\calQ)\to D^b(k\calP)
\quad\hbox{and}\quad
\Uparrow_\calQ^\calP:D^b(k\calQ)\to D^b(k\calP)
$$
On an object $M$ of $D^b(k\calQ)$, namely a complex of $k\calQ$-modules with nonzero homology in only finitely many degrees, 
we define $\uparrow_\calQ^\calP$ as the left derived functor
$$
M\uparrow_\calQ^\calP=k\calP\otimes_{k\calQ}^LM
$$
and $\Uparrow_\calQ^\calP$ as the right derived functor
$$
M\Uparrow_\calQ^\calP=R\Hom_{k\calQ}(k\calP,M).
$$
The way these functors are computed on a complex $M$ is that we take a complex of projective modules isomorphic to $M$ (a projective resolution) and apply $\uparrow_\calQ^\calP$ term by term to get $M\uparrow_\calQ^\calP$. To get $M\Uparrow_\calQ^\calP$ we take a complex of injective modules isomorphic to $M$ (an injective resolution) and apply $\Uparrow_\calQ^\calP$ term by term.
In both the module and derived category cases, $\uparrow_\calQ^\calP$ is the left adjoint of the restriction $\downarrow_\calQ^\calP$, and $\Uparrow_\calQ^\calP$ is the right adjoint of $\downarrow_\calQ^\calP$. 
We say that a module (resp. complex) $M$ is \textit{induced} from $\calQ$ if it has the form $N\uparrow_\calQ^\calP$ for some module (resp. complex) $N\in D^b(k\calQ)$ and \textit{coinduced} from $\calQ$ if it has the form $N\Uparrow_\calQ^\calP$ for some module (resp. complex) $N\in D^b(k\calQ)$.

We now come to an important new definition that specifies the way an interval may be positioned in a poset.
Given elements $a\le b$ in a finite poset $\calP$, we say that the closed interval $[a,b]$ is \textit{clamped} in $\calP$ if $a\le x$ implies either $x\le b$ or $b\le x$, and $y\le b$ implies either $a\le y$ or $y\le a$. For example, in the following posets the interval $[a,b]$ is clamped in the first poset, but no interval apart from single points or the whole poset is clamped in the second one. 
\begin{center}
\begin{minipage}[b]{0.3\linewidth}
\begin{tikzpicture}[xscale=.5,yscale=.5]
\draw (2,0)--(0,1)--(0,3)--(0,5)--(2,6)--(4,4)--(5,3)--(4,2)--(2,0);
\draw (2,0)--(1,1)--(1,3)--(1,5)--(2,6);
\draw (1,1)--(4,2)--(3,3)--(4,4)--(1,5);
\draw (1,1)--(0,3)--(1,5);
\draw (0,1)--(1,3)--(0,5);
\draw[fill] (2,0) circle [radius=0.08];
\draw[fill] (0,1) circle [radius=0.08];
\draw[fill] (1,1) circle [radius=0.08];
\draw[fill] (4,2) circle [radius=0.08];
\draw[fill] (0,3) circle [radius=0.08];
\draw[fill] (1,3) circle [radius=0.08];
\draw[fill] (3,3) circle [radius=0.08];
\draw[fill] (5,3) circle [radius=0.08];
\draw[fill] (4,4) circle [radius=0.08];
\draw[fill] (0,5) circle [radius=0.08];
\draw[fill] (1,5) circle [radius=0.08];
\draw[fill] (2,6) circle [radius=0.08];

\node [ right] at  (4,4)  {$a$};
\node [ right] at  (4,2)   {$b$};
\end{tikzpicture}
\end{minipage}
\begin{minipage}[b]{0.3\linewidth}
\begin{tikzpicture}[xscale=.5,yscale=.5]
\draw (4,0)--(2.5,1.5)--(4,3)--(5.5,1.5)--(4,0);
\draw[fill] (4,0) circle [radius=0.08];
\draw[fill] (2.5,1.5) circle [radius=0.08];
\draw[fill] (2.5,4.5) circle [radius=0.08];
\draw[fill] (4,3) circle [radius=0.08];
\draw[fill] (5.5,1.5) circle [radius=0.08];

\draw (4,3)--(2.5,4.5)--(1,3)--(2.5,1.5);
\draw[fill] (1,3) circle [radius=0.08];
\draw[fill] (2.5,1.5) circle [radius=0.08];
\end{tikzpicture}
\end{minipage}
\end{center}

Our convention when drawing Hasse diagrams of posets is that smaller elements are positioned above larger elements, which is the opposite of the usual convention. We do this because when drawing diagrams of representations it is usual to put elements that generate a module at the top of the module. 

We will show that the representation theory of a clamped interval reappears in the representation theory of $\calP$ in a rather transparent way. We will start by considering the implications of this condition for the derived category, because in this setting induction and coinduction surprisingly coincide on the interior of a clamped interval. We write half-open intervals in $\calP$ as $[a,b)$ and $(a,b]$. We mention that some parts of the next proposition follow from Section IX.9 of Mitchell's text~\cite{Mit1}, since clamped intervals are normal subsets in Mitchell's terminology.

\begin{proposition}\label{clamping-adjoint-properties}
Let the interval $[a,b]$ be clamped in the poset $\calP$. Let $M$ be a complex of $k\calP$-modules with homology supported on $[a,b)$ and let $N$ be a complex of $k\calP$-modules with homology supported on $(a,b]$.
\begin{enumerate}
\item The minimal projective resolution $\Pi_M$ of $M$ over $k\calP$ only has terms $P_x$ with $x\in[a,b]$, and is induced from $[a,b]$.
\item The adjunction counit $M\downarrow_{[a,b]}\uparrow^{\calP}\to M$ is an isomorphism, so that $M$ is induced from $[a,b]$ in $D^b(k\calP)$.
\item The minimal injective resolution $I_N$ of $N$ over $k\calP$ only has terms $I_x$ with $x\in[a,b]$, and is coinduced from $[a,b]$.
\item The adjunction unit $N\to N\downarrow_{[a,b]}\Uparrow^{\calP}$ is an isomorphism, so that $N$ is coinduced from $[a,b]$ in $D^b(k\calP)$.
\item Induction and coinduction of complexes with homology supported on $(a,b)$ are naturally isomorphic.
\end{enumerate}
\end{proposition}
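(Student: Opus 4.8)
The plan is to derive (5) formally from parts (1)--(4) together with one new ingredient: a support computation showing that induction does not spread homology outside $[a,b]$. I read the statement as: if $L\in D^b(k[a,b])$ has homology supported on the open interval $(a,b)$, then there is a natural isomorphism $L\uparrow^\calP\cong L\Uparrow^\calP$ in $D^b(k\calP)$. First I would record that $\uparrow^\calP_{[a,b]}$ is fully faithful; this uses no clamping hypothesis and follows from $P_x^{[a,b]}\uparrow^\calP\cong P_x^\calP$ and $P_x^\calP\downarrow_{[a,b]}\cong P_x^{[a,b]}$ for $x\in[a,b]$, compatibly with morphism spaces, so that applying $\downarrow_{[a,b]}\circ\uparrow^\calP$ term by term to a bounded complex of projective $k[a,b]$-modules recovers it and the adjunction unit $\eta_L\colon L\to(L\uparrow^\calP)\downarrow_{[a,b]}$ is an isomorphism. (One could instead cite Mitchell's Section IX.9.)

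The heart of the matter is to prove that $L\uparrow^\calP$ again has homology supported on $(a,b)$. I would take a bounded projective resolution $\Pi$ of $L$ over $k[a,b]$ (it exists since $k[a,b]$ has finite global dimension), whose terms are necessarily of the form $\bigoplus_i P^{[a,b]}_{x_i}$ with $x_i\in[a,b]$; then $L\uparrow^\calP\simeq\Pi\uparrow^\calP$ by the construction of the derived functor recalled above, a bounded complex with terms $\bigoplus_i P^\calP_{x_i}$. I then evaluate $\Pi\uparrow^\calP$ at a point $z\in\calP$. Since $\eta_L$ is an isomorphism, $(L\uparrow^\calP)\downarrow_{[a,b]}\cong L$ has homology on $(a,b)$, so $H^\ast(L\uparrow^\calP)$ vanishes at $a$ and at $b$ and is supported in $(a,b)$ over $[a,b]$. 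For $z\notin[a,b]$ there are two cases. If $a\not\le z$, then $P^\calP_x(z)=0$ for every $x\in[a,b]$, so $(\Pi\uparrow^\calP)(z)=0$. If $a\le z$, then the clamping condition forces $z\le b$ or $b\le z$; the former would put $z$ in $[a,b]$, so $b\le z$, and the structure map $(\Pi\uparrow^\calP)(b)\to(\Pi\uparrow^\calP)(z)$ is an isomorphism of complexes, since on each summand $P^\calP_x$ with $x\in[a,b]$ it is the map $P^\calP_x(b)\to P^\calP_x(z)$, an isomorphism $k\to k$ because $x\le b\le z$. Hence $H^\ast(L\uparrow^\calP)(z)\cong H^\ast(L\uparrow^\calP)(b)=0$, and $L\uparrow^\calP$ has homology supported on $(a,b)\subseteq(a,b]$.

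Given this, the conclusion is formal. Applying part (4) to $N:=L\uparrow^\calP$, whose homology lies in $(a,b]$, the unit $L\uparrow^\calP\to\bigl((L\uparrow^\calP)\downarrow_{[a,b]}\bigr)\Uparrow^\calP$ of the adjunction $\downarrow_{[a,b]}\dashv\Uparrow^\calP$ is an isomorphism; composing it with $\Uparrow^\calP$ applied to the isomorphism $\eta_L^{-1}\colon(L\uparrow^\calP)\downarrow_{[a,b]}\xrightarrow{\ \sim\ }L$ gives the desired isomorphism $L\uparrow^\calP\xrightarrow{\ \sim\ }L\Uparrow^\calP$, which is natural in $L$ because it is a composite of natural transformations. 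Symmetrically, one may instead begin with the minimal injective resolution of $L$, show in the same way that $L\Uparrow^\calP$ has homology on $(a,b)\subseteq[a,b)$, and finish using part (2) in place of (4).

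I expect the support computation to be the only real obstacle, and within it the case $a\le z$, $z\notin[a,b]$, where clamping is essential: clamping forces such $z$ to lie above $b$, and each induced projective $P^\calP_x$ with $x\in[a,b]$ is constant along $[b,\infty)$, so the induced complex is merely copied up from its stalk at $b$, where $L$ was already acyclic; this is precisely why induction cannot create homology outside $[a,b]$, and dually for coinduction. Everything else is routine bookkeeping with adjunction units and counits and the description of derived induction and coinduction via projective and injective resolutions.
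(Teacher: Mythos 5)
There is a genuine gap: the statement you are asked to prove consists of parts (1)--(5), but your proposal proves only (5), and it does so by \emph{invoking} parts (2) and (4) as known facts (``Applying part (4) to $N:=L\uparrow^\calP$\dots''). Parts (1)--(4) are where the clamping hypothesis does its real work, and they are not standard: without clamping, a complex with homology supported on $[a,b)$ need not be induced from $[a,b]$, since the syzygies of a module supported on $[a,b)$ can acquire support outside the interval, so the minimal resolution can involve projectives $P_x$ with $x\notin[a,b]$. So as a proof of the proposition your argument is circular in its essential step. To your credit, the ``support computation'' you isolate (induce a projective resolution from $[a,b]$; each $P^\calP_x$ with $x\in[a,b]$ is constant along $[b,\infty)$ and vanishes off $\{z: z\ge a\}$, so acyclicity at $b$ propagates to all $z\ge b$) is precisely the engine of the paper's proof of (1)--(2). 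What is missing is to run it in the form the proposition requires: start from a complex $M$ of $k\calP$-modules with homology on $[a,b)$, resolve $M\downarrow_{[a,b]}$ minimally over $k[a,b]$, induce, and then show the adjunction counit $\Pi_M^{[a,b]}\uparrow_{[a,b]}^\calP\to M$ is a quasi-isomorphism: on $[a,b]$ this is the (always valid) unit/counit isomorphism for restriction to a convex subposet, and off $[a,b]$ both sides have vanishing homology -- the induced complex by your clamping computation, and $M$ by hypothesis. Minimality of the induced resolution and the statement that its terms are $P_x$, $x\in[a,b]$, then give (1), and (3)--(4) follow dually with injective resolutions. None of this is in your write-up.

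Granting (1)--(4), your derivation of (5) is fine and is a mild variant of the paper's: you show induction preserves the property of having homology in $(a,b)$ and then compose the unit of $\downarrow\dashv\Uparrow$ with $\Uparrow$ applied to $\eta_L^{-1}$, whereas the paper simply regards a $k[a,b]$-complex with homology on $(a,b)$ as a $k\calP$-complex by extension by zero and applies (2) and (4) to it simultaneously. But since your support computation is essentially the unproven content of (1)--(2), the honest fix is to promote it to a full proof of (1)--(2) (and dualize for (3)--(4)) rather than treat it as an add-on to results you have assumed.
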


Note that for any convex subposet $\calQ$ of $\calP$ it is always the case that the unit $L\to L\uparrow^\calP\downarrow_\calQ $ and the counit $L\Uparrow^\calP\downarrow_ \calQ\to L$ are isomorphisms.

\begin{proof}
For the description and properties of projective $k\calP$-modules that we use here see \cite{Web1}, especially Proposition 4.4 in that exposition.
Let $\Pi_M^{[a,b]} \to M\downarrow_{[a,b]}^\calP$ be a minimal projective resolution of $M\downarrow_{[a,b]}^\calP$ over $k[a,b]$. The projective modules that appear in $\Pi_M^{[a,b]}$ have the form $P_x^{[a,b]}$ with $x\in [a,b]$. In the derived category,
$$
M\downarrow_{[a,b]}^\calP\uparrow_{[a,b]}^\calP\cong \Pi_M^{[a,b]}\uparrow_{[a,b]}^\calP=k\calP\otimes_{k[a,b]}\Pi_M^{[a,b]}
$$
is isomorphic to a complex of projectives with summands of the form $P_x^\calP$  with $x\in [a,b]$, as representations of $\calP$, because $P_x^{[a,b]}\uparrow_{[a,b]}^\calP\cong P_x^\calP$.
Furthermore $P_x^{[a,b]}(z)\cong P_x^\calP(z)$ for all $z\in{[a,b]}$. Hence on evaluation at $b$ we have that  $\Pi_M^{[a,b]}\uparrow_{[a,b]}^\calP(b) = \Pi_M^{[a,b]}(b)$ is acyclic, because $H_*(M)$ is zero at $b$. Also, $\Pi_M^{[a,b]}\uparrow_{[a,b]}^\calP$ is only non-zero on elements $y\ge a$, in which case either $y\le b$, in which case  $\Pi_M^{[a,b]}\uparrow_{[a,b]}^\calP(y) = \Pi_M^{[a,b]}(y)$,or $y\ge b$ because $[a,b]$ is clamped. In the latter case the unique morphism $b\to y$ induces an isomorphism
$$
\Pi_M^{[a,b]}\uparrow_{[a,b]}^\calP(b)\to \Pi_M^{[a,b]}\uparrow_{[a,b]}^\calP(y),
$$
because the same is true for each $P_x^\calP$ with $x\in[a,b]$, and thus
$$
\Pi_M^{[a,b]}\uparrow_{[a,b]}^\calP(y)
$$
is acyclic.
We conclude from all this that $\Pi_M^{[a,b]}\uparrow_{[a,b]}^\calP$ is a projective resolution of $M$ over $k\calP$, and the counit of the adjunction
$$
\Pi_M^{[a,b]}\uparrow_{[a,b]}^\calP\to M
$$
provides the isomorphism. The resolution is minimal because $\Pi_M^{[a,b]}$ is. This proves (1) and (2). The proof of (3) and (4) is dual, and (5) follows by combining these results.
\end{proof}

We examine the effect of clamping on Auslander-Reiten triangles in the derived category. Since the incidence algebras of finite posets have finite global dimension, by \cite[IX.10.3]{Mit1} for example, 
it follows that Auslander-Reiten triangles always exist in the bounded derived category, by \cite{Hap}, and they are constructed using the Serre functor $\nu$ on $D^b(k\calP)$ which is the left derived functor of the Nakayama functor, also written $\nu$.

\begin{proposition} \label{clamping-proposition}
Let the interval $[a,b]$ be clamped in the poset $\calP$ and let $M$ be a complex of $k\calP$-modules. Let $\nu$ be the Serre functor on $D^b(k\calP)$. Then  $M$ has homology supported on $[a,b)$ if and only if  $\nu M$ has homology supported on $(a,b]$.  Furthermore, if this is the case and $M$ is assumed to have homology supported on $[a,b)$, then $(\nu M)\downarrow_{[a,b]}^\calP \cong \nu (M\downarrow_{[a,b]}^\calP)$. Dually, if $N$ is a complex whose homology is supported on $(a,b]$, then $(\nu^{-1} N)\downarrow_{[a,b]}^\calP \cong \nu^{-1} (N\downarrow_{[a,b]}^\calP)$.
\end{proposition}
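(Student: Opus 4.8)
The plan is to reduce everything to Proposition~\ref{clamping-adjoint-properties} together with an explicit description of how the Nakayama functor acts on the indecomposable projectives and injectives whose support meets $[a,b]$. Recall that $\nu$ is computed on a complex by applying the ordinary Nakayama functor termwise to a projective resolution, that $\nu P_x^\calP\cong I_x^\calP$, and dually that $\nu^{-1}$ is computed from an injective resolution with $\nu^{-1}I_x^\calP\cong P_x^\calP$.

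I would first prove the forward half of the ``if and only if.'' Assume $M$ has homology supported on $[a,b)$. By Proposition~\ref{clamping-adjoint-properties}(1) the minimal projective resolution of $M$ over $k\calP$ is $\Pi_M=\Pi_M^{[a,b]}\uparrow_{[a,b]}^\calP$, a bounded complex all of whose terms are of the form $P_x^\calP$ with $x\in[a,b]$; hence $\nu M$ is represented by a bounded complex $\nu\Pi_M$ of injectives $I_x^\calP$ with $x\in[a,b]$. Since $I_x^\calP=k_{(-\infty,x]}$ and $x\le b$, the homology of $\nu M$ is supported on $(-\infty,b]$, and as clampedness forces each $z\le b$ to satisfy $z\ge a$ or $z\le a$, it is supported on $[a,b]\cup\{z : z\le a\}$. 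For $z\le a$ every structure map $I_x^\calP(z)\to I_x^\calP(a)$ with $x\ge a$ is the identity, so $\nu\Pi_M(z)\to\nu\Pi_M(a)$ is an isomorphism of complexes; hence it is enough to show $(\nu M)(a)\simeq 0$.

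The crux is to identify $(\nu M)(a)$ with $M(b)$. For $x,y\in[a,b]$ with $y\le x$, the space $\Hom_{k\calP}(P_x^\calP,P_y^\calP)$ is one-dimensional, spanned by the canonical map $\theta_{x,y}$ which is the identity on the support $[x,\infty)$ of $P_x^\calP$; likewise $\Hom_{k\calP}(I_x^\calP,I_y^\calP)$ is spanned by the canonical map $\psi_{x,y}$ which is the identity on the support $(-\infty,y]$ of $I_y^\calP$, and $\nu\theta_{x,y}=u_{x,y}\psi_{x,y}$ for a nonzero scalar $u_{x,y}$. The maps $\theta$ (and the maps $\psi$) compose without scalars, so functoriality of $\nu$ gives $u_{x,w}=u_{y,w}u_{x,y}$ whenever $w\le y\le x$ in $[a,b]$; since $[a,b]$ has least element $a$ this cocycle is a coboundary, and after rescaling the identifications $\nu P_x^\calP\cong I_x^\calP$ for $x\in[a,b]$ we may assume every $u_{x,y}=1$. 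With this normalisation, evaluating $\nu\theta_{x,y}=\psi_{x,y}$ at $a$ and $\theta_{x,y}$ at $b$ both yield $\mathrm{id}_k$ (because $a\le y$ and $b\ge x$), so evaluating $\nu\Pi_M$ at $a$ produces the same complex of vector spaces as evaluating $\Pi_M$ at $b$; the latter is $\Pi_M(b)\simeq M(b)$, which is acyclic. This gives one implication. The reverse implication is its formal dual: if $\nu M$ has homology on $(a,b]$, then by Proposition~\ref{clamping-adjoint-properties}(3) its minimal injective resolution is built from $I_x^\calP$ with $x\in[a,b]$, so $M\cong\nu^{-1}(\nu M)$ is represented by a bounded complex of projectives $P_x^\calP$ with $x\in[a,b]$, whose homology therefore lies on $[a,b]\cup\{z : z\ge b\}$; evaluating that complex at $b$ reproduces $(\nu M)(a)$, which is acyclic, forcing the homology onto $[a,b)$.

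For the remaining assertions, assume $M$ has homology on $[a,b)$ and compute $(\nu M)\downarrow_{[a,b]}^\calP$ by restricting $\nu\Pi_M=\nu(\Pi_M^{[a,b]}\uparrow_{[a,b]}^\calP)$ termwise, which is legitimate because restriction to a convex subposet is exact. For $x\in[a,b]$ one has $I_x^\calP\downarrow_{[a,b]}^\calP=k_{(-\infty,x]\cap[a,b]}=k_{[a,x]}=I_x^{[a,b]}$, so the resulting complex has the same terms as $\nu(\Pi_M^{[a,b]})$, which represents $\nu(M\downarrow_{[a,b]}^\calP)$; the differentials agree by the same bookkeeping, using that $P_x^{[a,b]}\uparrow_{[a,b]}^\calP\cong P_x^\calP$ and that convexity of $[a,b]$ identifies $\Hom_{k[a,b]}(P_x^{[a,b]},P_y^{[a,b]})$ with $\Hom_{k\calP}(P_x^\calP,P_y^\calP)$ and $\Hom_{k[a,b]}(I_x^{[a,b]},I_y^{[a,b]})$ with $\Hom_{k\calP}(I_x^\calP,I_y^\calP)$ for $x,y\in[a,b]$, together with the normalisation above. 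The statement for $\nu^{-1}$ is the formal dual, starting from the minimal injective resolution of $N$ and Proposition~\ref{clamping-adjoint-properties}(3),(4). The step I expect to be the main obstacle is exactly this bookkeeping with the Nakayama functor on the canonical maps: a priori $\nu$ is only defined up to a scalar on each indecomposable, and one must check both that evaluating it at $a$ (respectively $b$) genuinely reproduces evaluation at the opposite endpoint and that restriction intertwines $\nu^\calP$ with $\nu^{[a,b]}$. What makes this work is that a clamped interval is a genuine closed interval, with a unique least and a unique greatest element, and that is precisely what trivialises the relevant scalar cocycle.
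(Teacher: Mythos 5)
Your proposal is correct and follows essentially the same route as the paper: use Proposition~\ref{clamping-adjoint-properties} to get the induced minimal projective resolution with terms $P_x$, $x\in[a,b]$, apply the Nakayama functor termwise, identify $(\nu\Pi_M)(a)$ with $\Pi_M(b)$, use clampedness to push acyclicity to all $z\le a$ (dually $z\ge b$), and deduce the restriction statement from the fact that the resolution is induced with the same labels over $k[a,b]$. The only difference is that you make explicit the scalar bookkeeping (the cocycle normalisation of the identifications $\nu P_x\cong I_x$) which the paper dismisses with the phrase that the maps ``are the same when transported via this isomorphism''; this is a harmless and accurate elaboration, not a change of method.
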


\begin{proof} Suppose that $M$ has non-zero homology only at elements of $[a,b)$. The Serre functor $\nu$ is obtained by applying the Nakayama functor term by term to the projective resolution $\Pi_M$ of $M$, replacing each indecomposable projective by the indecomposable injective with the same socle. Assume that the resolution is minimal, so that by Proposition~\ref{clamping-adjoint-properties} the projective modules that appear have the form $P_x$ with $x\in[a,b]$. We obtain the isomorphism $(\nu \Pi_M)(a) \cong \Pi_M(b)$ because these projectives have the property $(\nu P_x)(a)=I_x(a)\cong k\cong P_x(b)$,
and the maps that appear in the two complexes are the same when transported via this isomorphism. Furthermore, all indecomposable injectives that appear in $\nu \Pi_M$ have the form $I_x$ with $x\in[a,b]$. Since $\Pi_M(b)$ is acyclic, so is $\nu(\Pi_M)(a)$ and hence so is $\nu(\Pi_M)(y)$ if $y\le a$, because the morphism $y\to a$ gives an isomorphism $\nu(\Pi_M)(y)\to \nu(\Pi_M)(a)$. This shows that $\nu(\Pi_M)$ has non-zero homology only at elements of $(a,b]$. The converse statement, assuming that $\nu(\Pi_M)$ has non-zero homology only at elements of $(a,b]$, is proved dually.

By Proposition~\ref{clamping-adjoint-properties} the projectives that appear in the minimal projective resolution of $M$ as a $k\calP$-module have exactly the same labels as those that appear in the minimal projective resolution over $k[a,b]$, because $\Pi_M$ is induced, and so applying the Serre functor commutes with restriction. The dual argument for $N$ is similar.
\end{proof}

We write Auslander-Reiten triangles in the bounded derived category as $\tau N\to M \to N\to \tau N[1]$, where $\tau=\nu[-1]$ is the Auslander-Reiten translate.

\begin{corollary}
Let the interval $[a,b]$ be clamped in the poset $\calP$ and let $M$ be a complex of $k\calP$-modules with homology supported on $[a,b)$. Let $\nu$ be the Serre functor on $D^b(k\calP)$.
\begin{enumerate}
\item For any $N\in D^b(k\calP)$ we have 
$$
\Hom_{D^b(k\calP)}(M,N)\cong \Hom_{D^b(k[a,b])}(M\downarrow_{[a,b]}^\calP,N\downarrow_{[a,b]}^\calP)
$$
\item There is a ring isomorphism
$$
\End_{D^b(k\calP)}(M)\cong \End_{D^b(k[a,b])}(M \downarrow_{[a,b]}^\calP).$$
Identifying ring actions via this isomorphism we have
$$
\Hom_{D^b(k\calP)}(M,\nu M)\cong \Hom_{D^b(k[a,b])}(M \downarrow_{[a,b]}^\calP,\nu M \downarrow_{[a,b]}^\calP)
$$
as $\End_{D^b(k\calP)}(M)$-modules. 
\item On $M$, the Auslander-Reiten translate commutes with restriction: $\tau(M\downarrow_{[a,b]}^\calP)\cong (\tau M)\downarrow_{[a,b]}^\calP$.
\item\label{prop:clamped_intervals_triangles} The Auslander-Reiten triangle in $D^b(k[a,b])$ terminating at $M\downarrow_{[a,b]}^\calP$ is the restriction of the Auslander-Reiten triangle in $D^b(k\calP)$ terminating at $M$, and it is also the restriction of the Auslander-Reiten triangle in $D^b(k\calP)$ starting at $\tau M$.
\end{enumerate}
\end{corollary}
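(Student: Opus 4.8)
The plan is to derive the four statements in turn from Propositions~\ref{clamping-adjoint-properties} and~\ref{clamping-proposition}, using only standard adjunction and Serre-duality bookkeeping together with the basic theory of Auslander-Reiten triangles in the bounded derived category. For~(1), Proposition~\ref{clamping-adjoint-properties}(2) says the counit $M\downarrow_{[a,b]}^\calP\uparrow_{[a,b]}^\calP\to M$ is an isomorphism, so composing it with the adjunction isomorphism for the derived pair $\uparrow_{[a,b]}^\calP\dashv\downarrow_{[a,b]}^\calP$ yields
\[
\Hom_{D^b(k\calP)}(M,N)\;\cong\;\Hom_{D^b(k\calP)}\bigl(M\downarrow_{[a,b]}^\calP\uparrow_{[a,b]}^\calP,\,N\bigr)\;\cong\;\Hom_{D^b(k[a,b])}\bigl(M\downarrow_{[a,b]}^\calP,\,N\downarrow_{[a,b]}^\calP\bigr).
\]
A check using the triangle identities of the adjunction shows this composite is simply the restriction map $f\mapsto f\downarrow_{[a,b]}^\calP$; I record this because it makes the remaining parts transparent. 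Setting $N=M$, the resulting bijection $\End_{D^b(k\calP)}(M)\to\End_{D^b(k[a,b])}(M\downarrow_{[a,b]}^\calP)$ is restriction of endomorphisms, hence a ring homomorphism, hence the ring isomorphism asserted in~(2).

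For the second isomorphism in~(2), apply~(1) with $N=\nu M$ and use Proposition~\ref{clamping-proposition} to replace $(\nu M)\downarrow_{[a,b]}^\calP$ by $\nu(M\downarrow_{[a,b]}^\calP)$. The isomorphism so obtained is postcomposition of the restriction map $f\mapsto f\downarrow_{[a,b]}^\calP$ with the Serre-functor isomorphism of Proposition~\ref{clamping-proposition}, and it is automatically compatible with the action of $\End_{D^b(k\calP)}(M)$ by precomposition, since $\downarrow_{[a,b]}^\calP$ is a functor; this gives the claimed isomorphism of $\End_{D^b(k\calP)}(M)$-modules. If in addition one wants compatibility with the action twisted by $\nu$, one checks that the isomorphism of Proposition~\ref{clamping-proposition} is natural in $M$, which is clear from its construction in that proof and can also be seen by observing, via~(1) and Serre duality in the two derived categories, that $(\nu M)\downarrow_{[a,b]}^\calP$ and $\nu(M\downarrow_{[a,b]}^\calP)$ both represent the functor $X\mapsto D\Hom_{D^b(k[a,b])}(M\downarrow_{[a,b]}^\calP,X)$ on $D^b(k[a,b])$, where $D=\Hom_k(-,k)$.

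Statement~(3) is then immediate: $\tau=\nu[-1]$, restriction commutes with the shift, and Proposition~\ref{clamping-proposition} gives $(\tau M)\downarrow_{[a,b]}^\calP=(\nu M)\downarrow_{[a,b]}^\calP[-1]\cong\nu(M\downarrow_{[a,b]}^\calP)[-1]=\tau(M\downarrow_{[a,b]}^\calP)$. For~(4) I take $M$ indecomposable, as the statement requires; then $M\downarrow_{[a,b]}^\calP$ is nonzero, since $M$ has homology in $[a,b)\subseteq[a,b]$, and indecomposable, since by~(2) its endomorphism ring is isomorphic to the local ring $\End_{D^b(k\calP)}(M)$. The restriction functor $\downarrow_{[a,b]}^\calP$ is exact on module categories, hence triangulated on the bounded derived categories, so it sends the Auslander-Reiten triangle $\tau M\to E\to M\xrightarrow{\delta}\tau M[1]$ ending at $M$ to a triangle $\tau(M\downarrow_{[a,b]}^\calP)\to E\downarrow_{[a,b]}^\calP\to M\downarrow_{[a,b]}^\calP\xrightarrow{\delta\downarrow_{[a,b]}^\calP}\tau(M\downarrow_{[a,b]}^\calP)[1]$, the outer terms being identified by~(3). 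By the standard description of Auslander-Reiten triangles in $D^b$ (see Happel~\cite{Hap}), the one ending at an indecomposable object $Z$ is characterised as the triangle whose connecting morphism $Z\to\nu Z$ generates the socle of $\Hom(Z,\nu Z)$ regarded as a module over $\End(Z)$. Since $\delta$ has this property for $Z=M$, and the $\End_{D^b(k\calP)}(M)$-module isomorphism of~(2) carries $\delta$ to $\delta\downarrow_{[a,b]}^\calP$ and the socle onto the socle, the restricted triangle is the Auslander-Reiten triangle ending at $M\downarrow_{[a,b]}^\calP$. The last clause of~(4) is the same assertion, because the Auslander-Reiten triangle ending at $M$ coincides, by uniqueness, with the one starting at $\tau M$.

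I expect the whole corollary to be essentially formal, the content being already packed into Propositions~\ref{clamping-adjoint-properties} and~\ref{clamping-proposition}. The one point requiring attention is in~(4): the additive isomorphism of~(1) alone is not enough; one needs the module-theoretic refinement of~(2) to recognise the restricted connecting morphism $\delta\downarrow_{[a,b]}^\calP$ as again a socle generator, and only then can one conclude that the restricted triangle is genuinely the Auslander-Reiten triangle rather than merely a non-split triangle with the correct outer terms.
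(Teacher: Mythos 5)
Your proposal is correct and follows essentially the same route as the paper: part (1) via the counit isomorphism of Proposition~\ref{clamping-adjoint-properties} plus the induction--restriction adjunction, parts (2) and (3) as formal consequences using Proposition~\ref{clamping-proposition}, and part (4) by noting that the Auslander-Reiten triangle is the mapping cone of a socle element of $\Hom(M,\nu M)$, which restriction carries to a socle element for $M\downarrow_{[a,b]}^\calP$, with mapping cones commuting with restriction. Your added checks (indecomposability of $M\downarrow_{[a,b]}^\calP$ via the local endomorphism ring, and the identification of the adjunction composite with the restriction map) are harmless elaborations of the same argument.
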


\begin{proof}
(1) We have
$$
\begin{aligned}
\Hom_{D^b(k\calP)}(M,N)&\cong
\Hom_{D^b(k\calP)}(M\downarrow_{[a,b]}^\calP\uparrow_{[a,b]}^\calP,N)\\
&\cong
\Hom_{D^b(k[a,b])}(M\downarrow_{[a,b]}^\calP,N\downarrow_{[a,b]}^\calP)\\
\end{aligned}
$$
by the adjoint property.

(2) These both follow from (1).

(3) This follows because $\tau(U)= \nu(U)[-1]$ for any object $U$, and both $\nu$ and the shift commute with restriction on $M$.

(4) The Auslander-Reiten triangles terminating at $M$ and $M\downarrow_{[a,b]}^\calP$ are related by restriction because they are both computed by taking the mapping cone of a homomorphism $M\to \nu M$ in the first case and  $M\downarrow_{[a,b]}^\calP\to \nu M\downarrow_{[a,b]}^\calP$ in the second case. This homomorphism must lie in the socle for the action of $\End(M)$, and such a homomorphism for $M$ restricts to one for $M\downarrow_{[a,b]}^\calP$ by (2). Since the mapping cone construction also commutes with restriction we obtain that the Auslander-Reiten triangle ending at $M$ restricts as claimed.
\end{proof}

We now put together these technicalities. The next corollary describes the basic relationship between the Auslander-Reiten quivers of the bounded derived categories of a poset and of a clamped subinterval, showing that part of one is copied into the other.

\begin{corollary}\label{AR-quiver-copying-corollary}
Let the interval $[a,b]$ be clamped in the poset $\calP$ and let $\Theta$ be the region of the Auslander-Reiten quiver of $D^b(k\calP)$ consisting of the meshes where either the right hand term has homology supported on  $[a,b)$ or the left hand term has homology supported on  $(a,b]$. Then all complexes in $\Theta$ have homology supported on $[a,b]$ and restriction $\downarrow_{[a,b]}^\calP$ gives an isomorphism between $\Theta$ and the corresponding region of the Auslander-Reiten quiver of $D^b([a,b])$.
\end{corollary}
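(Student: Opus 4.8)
The plan is to assemble the corollary from the preceding results, checking first that $\Theta$ is well-defined (consists of honest meshes whose vertices all have homology supported on $[a,b]$), then using the restriction functor to identify it with the analogous region in the Auslander-Reiten quiver of $D^b(k[a,b])$.

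First I would address the support claim. A mesh in the Auslander-Reiten quiver of $D^b(k\calP)$ is an Auslander-Reiten triangle $\tau N \to \bigoplus M_i \to N \to \tau N[1]$; by definition of $\Theta$ we are looking at those meshes where either $N$ has homology supported on $[a,b)$ or $\tau N$ has homology supported on $(a,b]$. By Proposition~\ref{clamping-proposition}, $N$ has homology supported on $[a,b)$ if and only if $\tau N = \nu N[-1]$ has homology supported on $(a,b]$ (the shift does not change supports), so these two conditions on a mesh are equivalent, and in either case both $N$ and $\tau N$ have homology supported on $[a,b]$. It remains to see that the middle terms $M_i$ also have homology supported on $[a,b]$: this follows because the long exact homology sequence of the triangle $\tau N \to \bigoplus M_i \to N \to \tau N[1]$ forces $H_*(\bigoplus M_i)$ to be zero at any element where both $H_*(N)$ and $H_*(\tau N)$ vanish, and each $M_i$ is a summand. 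Moreover any arrow of the Auslander-Reiten quiver incident to a vertex of $\Theta$ lies in one of these meshes, so $\Theta$ is a full subquiver closed under the translate in the appropriate direction, i.e. a genuine union of meshes.

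Next I would use part (4) of the Corollary together with Corollary parts (1)--(3) to show that $\downarrow_{[a,b]}^\calP$ maps $\Theta$ into the Auslander-Reiten quiver of $D^b(k[a,b])$ and is a quiver morphism there. For a vertex $M$ of $\Theta$ with homology supported on $[a,b)$, part (4) says the Auslander-Reiten triangle of $D^b(k\calP)$ ending at $M$ restricts to the Auslander-Reiten triangle of $D^b(k[a,b])$ ending at $M\downarrow_{[a,b]}^\calP$; the same applies with $M$ replaced by $\tau M$ (still in $\Theta$), so every mesh of $\Theta$ restricts to a mesh of the Auslander-Reiten quiver of $D^b(k[a,b])$, and arrows and the translate are preserved. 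For indecomposability: by Corollary (2), $\End_{D^b(k\calP)}(M) \cong \End_{D^b(k[a,b])}(M\downarrow_{[a,b]}^\calP)$ as rings, so $M\downarrow_{[a,b]}^\calP$ has local endomorphism ring exactly when $M$ does, hence $\downarrow_{[a,b]}^\calP$ carries indecomposables of $\Theta$ to indecomposables. Thus restriction gives a well-defined morphism of quivers from $\Theta$ to the corresponding region $\Theta'$ of the Auslander-Reiten quiver of $D^b(k[a,b])$, where $\Theta'$ is the set of meshes whose right-hand term has homology supported on $[a,b)$.

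Finally I would show this morphism is an isomorphism by constructing an inverse. For an object $L$ of $D^b(k[a,b])$ with homology supported on $[a,b)$, parts (1) and (2) of Proposition~\ref{clamping-adjoint-properties} say that induction $\uparrow_{[a,b]}^\calP$ takes $L$ to a complex of $k\calP$-modules with homology supported on $[a,b)$ whose restriction back is $L$ (this is the counit isomorphism), and conversely $M\downarrow_{[a,b]}^\calP\uparrow_{[a,b]}^\calP \cong M$ for $M$ in $\Theta$ by Proposition~\ref{clamping-adjoint-properties}(2). So $\uparrow_{[a,b]}^\calP$ and $\downarrow_{[a,b]}^\calP$ are mutually inverse bijections between the objects of $\Theta$ and those of $\Theta'$ (the objects with homology supported on $(a,b]$ being handled by the dual statements, parts (3) and (4), with coinduction, which coincides with induction on the overlap by part (5)); compatibility with the mesh structure on both sides then upgrades this object bijection to a quiver isomorphism, using part (4) of the Corollary in both directions. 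The main obstacle I anticipate is purely bookkeeping: making sure the two descriptions of $\Theta$ (via the right-hand term, via the left-hand term) genuinely pick out the same set of meshes and that no mesh of $D^b(k[a,b])$ whose right-hand term has homology on $[a,b)$ is missed — but this is exactly what Proposition~\ref{clamping-proposition} guarantees, since the $\nu$-translate relates the two support conditions.
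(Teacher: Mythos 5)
Your overall assembly is the one the paper intends: the paper states Corollary~\ref{AR-quiver-copying-corollary} without an explicit proof, as a direct combination of Proposition~\ref{clamping-adjoint-properties}, Proposition~\ref{clamping-proposition} and the corollary immediately preceding it, and most of your steps are exactly this assembly and are fine --- the equivalence of the two defining conditions on a mesh via $\nu$ (shift does not move poset support), the support of the middle terms via the long exact homology sequence evaluated at each element of $\calP$, the restriction of meshes via part (4) of the preceding corollary, and the bijection on end terms via the unit/counit isomorphisms. One small imprecision: Proposition~\ref{clamping-adjoint-properties} is stated for complexes of $k\calP$-modules, so to know that $L\uparrow_{[a,b]}^\calP$ has homology supported on $[a,b)$ for $L\in D^b(k[a,b])$ you should first regard $L$ as a $k\calP$-complex by extension by zero (which is exact, so the homology is still supported on $[a,b)$) and then apply parts (1)--(2); this is easily repaired.

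The genuine gap is your treatment of the middle terms. You justify ``restriction carries indecomposables of $\Theta$ to indecomposables'' by the End-ring isomorphism of part (2) of the preceding corollary, but that statement is proved only for complexes whose homology is supported on $[a,b)$ (dually $(a,b]$), because it rests on such complexes being induced (resp.\ coinduced). An indecomposable middle term of a mesh in $\Theta$ need satisfy neither condition: its homology can meet both $a$ and $b$. This already happens in Example~\ref{A4-example}, where the middle term of a shaded mesh is $k_{[a,b]}$, which is neither induced nor coinduced from $[a,b]$ (its induction is $P_a^\calP$ and its coinduction is $I_b^\calP$). For such boundary vertices the adjunctions only control morphisms out of induced objects and into coinduced objects, which are precisely the wrong directions for the irreducible maps $\tau N\to M_i$ and $M_i\to N$ in the mesh, so neither part (1) nor part (2) applies. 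Krull--Schmidt and part (4) give $\bigl(\bigoplus_i M_i\bigr)\downarrow_{[a,b]}^\calP\cong$ the middle term of the Auslander-Reiten triangle downstairs, and each $M_i\downarrow_{[a,b]}^\calP\ne 0$ since $M_i$ has nonzero homology inside $[a,b]$; but to conclude that the two regions have the same arrows (with multiplicities) you must still rule out an indecomposable $M_i$ restricting to a decomposable object, and your citation does not do this. This step needs its own argument, and supplying it is the real content missing from your proposal (the paper, giving no proof, glosses over it as well).
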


\begin{example}\label{A4-example}
We illustrate Corollary~\ref{AR-quiver-copying-corollary} with a straightforward example. We take the poset $\calP$ to be the chain
\begin{center}
\begin{tikzpicture}[xscale=.5,yscale=.5]
\draw (0,0)--(0,1)--(0,2)--(0,3);
\draw[fill] (0,0) circle [radius=0.08];
\draw[fill] (0,1) circle [radius=0.08];
\draw[fill] (0,2) circle [radius=0.08];
\draw[fill] (0,3) circle [radius=0.08];

\node [ right] at  (0,0)  {$4$};
\node [ right] at  (0,1)  {$3$};
\node [ right] at  (0,2)  {$2$};
\node [ right] at  (0,3)  {$1$};
\node at (-3,1.5) {$\calP=$};
\end{tikzpicture}
\end{center}
whose representations are the same as those of the Dynkin quiver $A_4$ with all arrows oriented in the same direction. The Auslander-Reiten quiver of $D^b(k\calP)$ is familiar from~\cite{Hap} and part of it looks as follows.
\begin{center}
\begin{tikzpicture}[xscale=1.2,yscale=1.2]

\draw[->] (-1.7,0.3)--(-1.3,0.7);
\draw[->] (0.3,0.3)--(0.7,0.7);
\draw[->] (2.3,0.3)--(2.7,0.7);
\draw[->] (4.3,0.3)--(4.7,0.7); 

\draw[->] (-2.7,1.3)--(-2.3,1.7);
\draw[->] (-0.7,1.3)--(-0.3,1.7);
\draw[->] (1.3,1.3)--(1.7,1.7);
\draw[->] (3.3,1.3)--(3.7,1.7); 

\draw[->] (-1.7,2.3)--(-1.3,2.7);
\draw[->] (0.3,2.3)--(0.7,2.7);
\draw[->] (2.3,2.3)--(2.7,2.7);
\draw[->] (4.3,2.3)--(4.7,2.7);

\draw[->] (-2.7,0.7)--(-2.3,0.3);
\draw[->] (-0.7,0.7)--(-0.3,0.3);
\draw[->] (1.3,0.7)--(1.7,0.3);
\draw[->] (3.3,0.7)--(3.7,0.3);
\draw[->] (5.3,0.7)--(5.7,0.3);

\draw[->] (-1.7,1.7)--(-1.3,1.3);
\draw[->] (0.3,1.7)--(0.7,1.3);
\draw[->] (2.3,1.7)--(2.7,1.3);
\draw[->] (4.3,1.7)--(4.7,1.3);

\draw[->] (-2.7,2.7)--(-2.3,2.3);
\draw[->] (-0.7,2.7)--(-0.3,2.3);
\draw[->] (1.3,2.7)--(1.7,2.3);
\draw[->] (3.3,2.7)--(3.7,2.3);

\node at (-2,0) {$\scriptstyle{\begin{smallmatrix}1\\ 2\\ 3 \\ 4 \end{smallmatrix}}\rlap{$\scriptstyle [-1]$}$};
\node at (0,0) {${\begin{smallmatrix}4 \end{smallmatrix}}$};
\node at (2,0) {${\begin{smallmatrix}3 \end{smallmatrix}}$};
\node at (4,0) {${\begin{smallmatrix}2 \end{smallmatrix}}$};
\node at (6,0) {${\begin{smallmatrix}1 \end{smallmatrix}}$};

\node at (-3,1) {$\scriptstyle{\begin{smallmatrix}2\\ 3 \\ 4 \end{smallmatrix}}\rlap{$\scriptstyle [-1]$}$};
\node at (-1,1) {$\scriptstyle{\begin{smallmatrix}1\\ 2\\ 3 \end{smallmatrix}}\rlap{$\scriptstyle [-1]$}$};
\node at (1,1) {${\begin{smallmatrix}3 \\ 4 \end{smallmatrix}}$};
\node at (3,1) {${\begin{smallmatrix}2\\ 3 \end{smallmatrix}\rlap{$\scriptstyle =M$}}$};
\node at (5,1) {${\begin{smallmatrix}1 \\ 2 \end{smallmatrix}}$};

\node at (-2,2) {$\scriptstyle{\begin{smallmatrix} 2\\ 3 \end{smallmatrix}}\rlap{$\scriptstyle [-1]$}$};
\node at (0,2) {$\scriptstyle{\begin{smallmatrix} 1\\ 2 \end{smallmatrix}}\rlap{$\scriptstyle [-1]$}$};
\node at (2,2) {${\begin{smallmatrix} 2\\ 3 \\ 4\end{smallmatrix}\rlap{$\scriptstyle=\Rad P_1$}}$};
\node at (4,2) {${\begin{smallmatrix}1 \\ 2\\ 3 \end{smallmatrix}\rlap{$\scriptstyle=P_1/\Soc P_1$}}$};

\node at (-3,3) {$\scriptstyle{\begin{smallmatrix} 3 \end{smallmatrix}}\rlap{$\scriptstyle [-1]$}$};
\node at (-1,3) {$\scriptstyle{\begin{smallmatrix} 2 \end{smallmatrix}}\rlap{$\scriptstyle [-1]$}$};
\node at (1,3) {$\scriptstyle{\begin{smallmatrix} 1 \end{smallmatrix}}\rlap{$\scriptstyle [-1]$}$};
\node at (3,3) {${\begin{smallmatrix} 1\\ 2\\ 3\\ 4 \end{smallmatrix}\rlap{$\scriptstyle=P_1$}}$};
\node at (5,3) {$\scriptstyle{\begin{smallmatrix} 4 \end{smallmatrix}}\rlap{$\scriptstyle [1]$}$};

\draw[pattern=crosshatch dots] (1, -0.4)--(5,-0.4)--(3,1.7)--(1,-0.4);
\draw[pattern=crosshatch dots] (-4, 3.4)--(0,3.4)--(-2,1.3)--(-4, 3.4);

\end{tikzpicture}
\end{center}
We consider the clamped interval $[2,3]$. The region $\Theta$ described in Corollary~\ref{AR-quiver-copying-corollary} consists of infinitely many triangles, of which two are shown shaded. These triangles make up the quiver of $D^b(k[2,3])$ and are copied into the quiver of $D^b(k\calP)$. We will see in Proposition~\ref{glue-proposition} exactly how these triangles are glued in to the bigger quiver. For future reference we have also labeled terms in the Auslander-Reiten triangle containing the indecomposable projective $P_1$.
\end{example}

In the next sections we will use this isomorphism between parts of the Auslander-Reiten quiver for a poset and a clamped interval in two different ways. Our first application will be to give a criterion in the next section for the bounded derived category of a poset to fail to be fractionally Calabi-Yau. After that we will show how this approach can be used to construct the Auslander-Reiten quivers of both the bounded derived categories and of the module categories of posets.

\section{Posets that are fractionally Calabi-Yau}
We say that a triangulated category is \textit{Calabi-Yau} if some power of the shift functor is a Serre functor $\nu$. More generally, we say that the category is \textit{fractionally Calabi-Yau} if it has a Serre functor $\nu$ and $\nu^n=[m]$ for some powers $m,n$ of the shift functor and the Serre functor. 
Algebras whose bounded derived categories are fractionally Calabi-Yau have received attention recently in the work of several authors, and we mention \cite{Cha}, \cite{HI}, \cite{KLM} and \cite{Sch} as examples.

In the context of the bounded derived category of a finite dimensional algebra $\Lambda$ the Auslander-Reiten translate on $D^b(\Lambda)$ has the form $\tau(M)=\nu(M)[-1]$ so that, when Auslander-Reiten triangles exist, $D^b(\Lambda)$ is fractionally Calabi-Yau if and only if the Auslander-Reiten quiver is periodic up to some power of the shift.
This possibility was considered by Scherotzke in \cite[Theorem 4.13]{Sch}, and she showed that if a stable Auslander-Reiten component of $D^b(\Lambda)$ contains a shift-periodic complex, then its tree class is either a finite Dynkin diagram or $A_{\infty}$. 
Furthermore, in the Dynkin case this is the only Auslander-Reiten component of $D^b(\Lambda)$, and $D^b(\Lambda)$ has `finite representation type'. This means that there are only finitely many indecomposable objects in $D^b(\Lambda)$ up to shift, and it implies that $\Lambda$ itself has finite representation type.  It is  interesting to understand which algebras of infinite representation type have the fractional Calabi-Yau property. 

Using Scherotzke's result, we present the following as a corollary of Corollary~\ref{AR-quiver-copying-corollary}.

\begin{corollary} \label{cor:inf_rep_type} Let $\calP$ be a poset for which  $k\calP$ has infinite representation type and which contains a clamped subinterval $\calQ =[a,b]$. Suppose there is an object of $D^b(k\calQ)$ represented by a complex $M$ with homology supported only in $[a,b)$ such that the Auslander-Reiten triangle terminating in $M$ has more than two middle terms. Then $D^b(k\calP)$  is not fractionally Calabi-Yau.
\end{corollary}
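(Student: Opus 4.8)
The plan is to argue by contradiction: assuming $D^b(k\calP)$ is fractionally Calabi-Yau, I will exhibit a vertex of its Auslander--Reiten quiver with more than two incoming arrows and then derive a contradiction from Scherotzke's dichotomy together with the infinite representation type of $k\calP$. The first step is to move $M$ across the clamp. Put $\tilde M := M\uparrow_{[a,b]}^\calP \in D^b(k\calP)$. By Proposition~\ref{clamping-adjoint-properties}, $\tilde M$ has homology supported on $[a,b)$ and $\tilde M\downarrow_{[a,b]}^\calP\cong M$ (indeed $\tilde M\downarrow\uparrow\cong\tilde M$), so $\tilde M$ is the indecomposable object of $\Theta$ corresponding to $M$ under the isomorphism of Corollary~\ref{AR-quiver-copying-corollary}. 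The mesh of the Auslander--Reiten quiver of $D^b(k\calP)$ ending at $\tilde M$ lies in $\Theta$, because its right-hand term $\tilde M$ has homology on $[a,b)$; hence Corollary~\ref{AR-quiver-copying-corollary} carries it isomorphically onto the mesh ending at $M$ in the Auslander--Reiten quiver of $D^b(k[a,b])$. Consequently restriction neither merges, splits, nor annihilates the indecomposable summands of the middle term, so the Auslander--Reiten triangle of $D^b(k\calP)$ ending at $\tilde M$ has more than two middle terms; equivalently, $\tilde M$ has at least three arrows coming into it.

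Now I would invoke structure theory. Since $k\calP$ has finite global dimension, $D^b(k\calP)$ has a Serre functor and $\tau=\nu[-1]$ is an autoequivalence \cite{Hap}, so its Auslander--Reiten quiver is a stable translation quiver; let $\calC$ be the component containing $\tilde M$ and $\Delta$ its tree class (via Riedtmann's structure theorem). A vertex of $\calC$ with $\ge 3$ incoming arrows forces a vertex of $\Delta$ of valency $\ge 3$, so $\Delta$ is neither $A_n$ nor $A_\infty$. On the other hand, if $D^b(k\calP)$ is fractionally Calabi-Yau then its Auslander--Reiten quiver is periodic up to a power of the shift, so $\calC$ contains a shift-periodic complex, and by \cite[Theorem 4.13]{Sch} the tree class $\Delta$ must be a finite Dynkin diagram or $A_\infty$. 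The only surviving possibility is that $\Delta$ is Dynkin of type $D$ or $E$ --- but then, again by \cite[Theorem 4.13]{Sch}, $\calC$ is the \emph{only} Auslander--Reiten component of $D^b(k\calP)$ and $D^b(k\calP)$, hence $k\calP$, has finite representation type, contrary to hypothesis. Therefore $D^b(k\calP)$ is not fractionally Calabi-Yau.

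The step I expect to demand the most care is the transfer of the phrase ``more than two middle terms'' from $D^b(k[a,b])$ up to $D^b(k\calP)$: one needs the indecomposable summands of the middle term of the upstairs triangle to restrict bijectively to those of the downstairs triangle. This is precisely what Corollary~\ref{AR-quiver-copying-corollary} provides, once one checks that the \emph{entire} mesh ending at $\tilde M$, not merely $\tilde M$ itself, lies in the region $\Theta$ (which follows from the homology-support description of $\Theta$). Everything after that is a formal combination of Happel's existence of Auslander--Reiten triangles in $D^b$ of an algebra of finite global dimension, Riedtmann's structure theorem for stable translation quivers, and Scherotzke's dichotomy.
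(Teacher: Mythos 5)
Your proposal is correct and follows essentially the same route as the paper: transfer the Auslander--Reiten triangle with more than two middle terms from $D^b(k[a,b])$ to $D^b(k\calP)$ via the clamping results, then combine Scherotzke's dichotomy with the infinite representation type hypothesis to reach a contradiction. The only difference is organizational (the paper first uses infinite type to force tree class $A_\infty$ and then contradicts it with the fat mesh, while you rule out type $A$/$A_\infty$ first and then contradict the Dynkin case), plus your explicit verification that the whole mesh lies in $\Theta$, which the paper leaves implicit.
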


The same conclusion holds if there is a complex $M$ with homology supported only in $(a,b]$ if  the Auslander-Reiten triangle starting in $M$ has more than two middle terms.

\begin{proof} Suppose to the contrary that  $D^b(k\calP)$  is fractionally Calabi-Yau and consider the Auslander-Reiten component of $D^b(k\calP)$ containing $M$. By Scherotzke's theorem \cite[4.13]{Sch} we conclude that the tree class of the component must be $A_\infty$, since $k\calP$ has infinite representation type. Take $\calQ = [a,b]$ in Proposition~\ref{clamping-proposition}. Since the Auslander-Reiten triangle terminating in $M$ has more than two indecomposable middle terms, part   (\ref{prop:clamped_intervals_triangles}) of Proposition~\ref{clamping-proposition}  says that $D^b(k\calP)$ also has an Auslander-Reiten triangle with more than two indecomposable middle terms and so the tree class cannot by $A_\infty$. This contradiction allows us to conclude that $D^b(k\calP)$  is not fractionally Calabi-Yau.
\end{proof}

\begin{example}\label{chain-product-example}
Let $\calP$ be a poset for which $k\calP$ has infinite representation type. Suppose that one of the following three intervals $\calQ=[a,b]$ is clamped in $\calP$:
\begin{center}
\begin{minipage}[b]{0.3\linewidth}
\begin{tikzpicture}[xscale=.5,yscale=.5]
\draw (2,0)--(0.5,1.5)--(2,3)--(3.5,1.5)--(2,0);
\draw[fill] (2,0) circle [radius=0.08];
\draw[fill] (0.5,1.5) circle [radius=0.08];
\draw[fill] (2,3) circle [radius=0.08];
\draw[fill] (3.5,1.5) circle [radius=0.08];

\node [ below] at  (2,0)  {\tiny $b$};
\node [ above] at  (2,3)   {\tiny $a$};
\node [ above] at  (.5,1.5)   {\tiny $1$};
\node [ above] at  (3.5,1.5)   {\tiny $2$};

\node [ below] at  (2,-1.5)   {Poset 1};
\end{tikzpicture}
\end{minipage}
\begin{minipage}[b]{0.3\linewidth}
\begin{tikzpicture}[xscale=.5,yscale=.5]
\draw (4,0)--(2.5,1.5)--(4,3)--(5.5,1.5)--(4,0);
\draw[fill] (4,0) circle [radius=0.08];
\draw[fill] (2.5,1.5) circle [radius=0.08];
\draw[fill] (2.5,4.5) circle [radius=0.08];
\draw[fill] (4,3) circle [radius=0.08];
\draw[fill] (5.5,1.5) circle [radius=0.08];

\draw (4,3)--(2.5,4.5)--(1,3)--(2.5,1.5);
\draw[fill] (1,3) circle [radius=0.08];
\draw[fill] (2.5,1.5) circle [radius=0.08];

\node [ below] at  (4,0)  {\tiny $b$};
\node [ above] at  (2.5,4.5)   {\tiny $a$};
\node [ above] at  (1,3)   {\tiny $1$};
\node [ above] at  (4,3)   {\tiny $2$};
\node [ above] at  (2.5,1.5)   {\tiny $3$};
\node [ above] at  (5.5,1.5)   {\tiny $4$};

\node [ below] at  (3.25,-1.5)   {Poset 2};
\end{tikzpicture}
\end{minipage}
\begin{minipage}[b]{0.3\linewidth}
\begin{tikzpicture}[xscale=.5,yscale=.5]
\draw (5,0)--(3.5,1.5)--(5,3)--(6.5,1.5)--(5,0);
\draw[fill] (5,0) circle [radius=0.08];
\draw[fill] (3.5,1.5) circle [radius=0.08];
\draw[fill] (5,3) circle [radius=0.08];
\draw[fill] (6.5,1.5) circle [radius=0.08];
\draw (5,3)--(3.5,4.5)--(2,3)--(3.5,1.5);
\draw[fill] (2,3) circle [radius=0.08];
\draw[fill] (3.5,4.5) circle [radius=0.08];
\draw (2,3)--(0.5,4.5)--(2,6)--(3.5,4.5);
\draw[fill] (0.5,4.5) circle [radius=0.08];
\draw[fill] (2,6) circle [radius=0.08];

\node [ below] at  (5,0)  {\tiny $b$};
\node [ above] at  (2,6)   {\tiny $a$};
\node [ above] at  (3.5,1.5)   {\tiny $5$};
\node [ above] at  (6.5,1.5)   {\tiny $6$};
\node [ above] at  (2,3)   {\tiny $3$};
\node [ above] at  (5,3)   {\tiny $4$};
\node [ above] at  (.5,4.5)   {\tiny $1$};
\node [ above] at  (3.5,4.5)   {\tiny $2$};

\node [ below] at  (3.5,-1.5)   {Poset 3};
\end{tikzpicture}
\end{minipage}
\end{center}

We will show in each case that $D^b(k\calP)$ is not fractionally Calabi-Yau. We apply Corollary \ref{cor:inf_rep_type} using a certain $k\calP$-complex $M$ with homology supported in  $[a,b)$, and in each case the complex will be a module in degree 0. As before, let $P_x$ denote the indecomposable projective representation of $k\calP$ generated at vertex $x$. When $\calP$ contains the clamped interval described by the first diagram, take $M=P_a/P_b$. For the second diagram take $M=\Rad P_a/P_b$. In the third case we take $M$ to be the indecomposable middle term of the Auslander-Reiten triangle terminating in the indecomposable complex $P_6 \to P_3$. Then $M$ is a $k\calP$-module of the form $\begin{smallmatrix} &0&&& \\ k && k^2  \\ & k^2&&k^2 \\&&k&& k \\ &&&0 \end{smallmatrix}$ which we will denote as $ \left[ \begin{smallmatrix} 1 &0 \\ 2&2 \\ 1&2 \\ 0 &1 \end{smallmatrix}\right]$.
By Proposition \ref{clamping-proposition}, and in each case, the Auslander-Reiten triangles over $k\calP$ and $k\calQ$ terminating at $M$ are the same. Furthermore, one calculates over $k\calQ$ that in each case the Auslander-Reiten triangle terminating at $M$  has three indecomposable middle terms. In particular the following are Auslander-Reiten triangles:

\[ \tag{Poset 1} \Rad P_a \to P_a\oplus k_1 \oplus k_2 \to P_a/P_b\to \Rad P_a[1]
\]

\[ 
\tag{Poset 2}  
\left( \begin{smallmatrix}   P_b\\
\downarrow \\ P_1 \oplus P_2  \end{smallmatrix} \right) \to  
\Rad P_a\oplus P_1/P_b \oplus P_2/P_b\to(\Rad P_a)/P_b     \to 
\left( \begin{smallmatrix}  P_b\\
\downarrow \\
P_1 \oplus P_2  \end{smallmatrix} \right)  [1].
\]
In the above triangle the two middle terms are modules in degree 0, and the two end complexes (which are also isomorphic to shifts of modules) have their lowest shown terms in degree 0.

For the third poset, the calculations become more involved but we again find an Auslander-Reiten triangle consisting of modules in degree 0. We will use dimension vectors of representations to simplify the notation. The following describes the first three terms of an Auslander-Reiten triangle in $D^b(k\calQ)$ with three indecomposable middle summands:
\[ 
\leqno{\hbox{(Poset 3) }}
\left[ \begin{smallmatrix} 1 &0 \\ 2&1 \\ 1&2 \\ 0 &1 \end{smallmatrix}\right]
\to \left[ \begin{smallmatrix} 1 &0 \\ 1&1 \\ 1&2 \\ 0 &1 \end{smallmatrix}\right]
  \oplus \left[ \begin{smallmatrix} 0 &0 \\ 1&1 \\ 0&1 \\ 0 &1 \end{smallmatrix}\right]
\oplus \left[ \begin{smallmatrix} 1 &0 \\ 2&1 \\ 1&1 \\ 0&0 \end{smallmatrix}\right]
 \to \left[ \begin{smallmatrix} 1 &0 \\ 2&2 \\ 1&2 \\ 0 &1 \end{smallmatrix}\right]\to \left[ \begin{smallmatrix} 1&0 \\ 2&1 \\ 1&2 \\ 0&1 \end{smallmatrix}\right] [1] .\]

In fact, for each of the three posets in our example the Auslander-Reiten quiver of $D^b(k\calQ)$ is a single component of Dynkin type $D_4$, $E_6$, and $E_8$, respectively. An application of Corollary \ref{cor:inf_rep_type} then shows that $k\calP$ is not fractionally Calabi-Yau.

It is interesting to note that continuing this pattern and taking four stacked boxes for the poset $\calQ$, does \emph{not} yield a restriction on whether $D^b (k\calP)$ is fractionally Calabi-Yau. This is because the Auslander-Reiten triangles for $k\calQ$ have at most two middle terms.
\end{example}

\begin{example}\label{canonical-algebra-example} As another example, we use clamping theory to show that incidence algebras of posets $\calP$ that have the following posets $\calQ$  as clamped intervals are not fractionally Calabi-Yau.


\begin{center}
\begin{minipage}[b]{0.3\linewidth}
\centering
\begin{tikzpicture}[xscale=.5,yscale=.5]
\draw (2,0)--(0.5,1)--(1,2)--(1,3);
\draw (2,5)--(3,2)--(2,0);
\draw (1,4)--(2,5);
\draw (2,0)--(2,1)--(1,2);
\draw[dotted] (1,3)--(1,4) ;
\draw[fill] (2,0) circle [radius=0.08];
\draw[fill] (0.5,1) circle [radius=0.08];
\draw[fill] (2,1) circle [radius=0.08];
\draw[fill] (1,2) circle [radius=0.08];
\draw[fill] (2,5) circle [radius=0.08];
\draw[fill] (1,3) circle [radius=0.08];
\draw[fill] (1,4) circle [radius=0.08];
\draw[fill] (3,2) circle [radius=0.08];

\node [ below] at  (2,0)  {\tiny $\omega$};
\node [ left] at  (0.5,1)   {\tiny $\beta_1$};
\node [ left] at  (1,2)   {\tiny $\beta_2$};
\node [ left] at  (1,4)   {\tiny $\beta_{r-1}$};
\node [ above] at  (2,5)   {\tiny $\alpha$};
\end{tikzpicture}
\end{minipage}%
\begin{minipage}[b]{0.3\linewidth}
\centering
\begin{tikzpicture}[xscale=.5,yscale=.5]
\draw (3,2)--(3.5,1)-- (2,0)--(0.5,1)--(1,2)--(1,3);
\draw (2,5)--(3,2)--(2,1);
\draw (1,4)--(2,5);
\draw (2,0)--(2,1)--(1,2);
\draw[dotted] (1,3)--(1,4) ;
\draw[fill] (3.5,1) circle [radius=0.08];
\draw[fill] (2,0) circle [radius=0.08];
\draw[fill] (0.5,1) circle [radius=0.08];
\draw[fill] (2,1) circle [radius=0.08];
\draw[fill] (1,2) circle [radius=0.08];
\draw[fill] (2,5) circle [radius=0.08];
\draw[fill] (1,3) circle [radius=0.08];
\draw[fill] (1,4) circle [radius=0.08];
\draw[fill] (3,2) circle [radius=0.08];

\node [ below] at  (2,0)  {\tiny $\omega$};
\node [ left] at  (0.5,1)   {\tiny $\beta_1$};
\node [ left] at  (1,2)   {\tiny $\beta_2$};
\node [ left] at  (1,4)   {\tiny $\beta_{r-1}$};
\node [ above] at  (2,5)   {\tiny $\alpha$};
\end{tikzpicture}
\end{minipage}
\end{center}

For the posets $\calQ$ corresponding to these diagrams, the incidence algebra $k\calQ$ is derived equivalent to a canonical algebra of Euclidean type   \cite[1.1]{Lad}. Canonical algebras of Euclidean type are a class of finite-dimensional algebras of global dimension at most $2$ that have an Auslander-Reiten quiver component whose section is a Euclidean diagram  \cite{SS}. They are of special interest as they are of minimal infinite representation type in the sense of Happel-Vossieck \cite{HV}. More precisely, the posets $\calQ$ corresponding to the diagram on the left have incidence algebras that are derived equivalent to canonical algebras that are denoted in the literature as  $C(2,2,r)$ where $2\leq r$. These are said to be of Euclidean type $\widetilde D_{r+2}$.   The posets corresponding to the diagram on the right on the other hand have incidence algebras that are derived equivalent to canonical algebras  $C(2,3,r)$ where $r=3, 4, 5$. These are of Euclidean type $\widetilde E_{i}$ for $i=6, 7, 8$ respectively.  We refer the reader to \cite{SS} for further background on these algebras.

Whenever one of the above posets $\calQ$ is clamped in a poset $\calP$, an application of Corollary \ref{cor:inf_rep_type} shows that the incidence algebra $k\calP$ is not fractionally Calabi-Yau.
\end{example}

\section{Iterated clamping}

In this section and the next we turn our attention to posets constructed by repeatedly clamping intervals in an inductive process. It turns out that the posets constructed in this way are all piecewise hereditary, meaning that they are derived equivalent to hereditary algebras. We are able to specify combinatorially the tree of the corresponding hereditary algebra and hence determine the entire  Auslander-Reiten quiver of $D^b(k\calP)$. We are  also able to reverse-engineer this process and construct posets that are piecewise hereditary with a given tree class for many trees, including all trees that are barycentric subdivisions. 

To this end we define a class of posets $\calI\calC$ recursively as follows. All of the posets will have a unique minimal element $\alpha$ and a unique maximal element $\omega$. We define $\calI\calC_0$ to contain only the poset with a single element: $\bullet\;\alpha=\omega$. Now supposing that $n\ge 1$ and that $\calI\calC_{n-1}$ has been defined, we define $\calI\calC_n$ to consist of the posets that have a unique minimal element $\alpha$ and a unique maximal element $\omega$ so that the open interval $(\alpha,\omega)$ is a disjoint union of posets from $\calI\calC_{n-1}$:
$$
(\alpha,\omega)=P_1\sqcup\cdots\sqcup P_s,\quad s\ge0, \quad P_i\in\calI\calC_{n-1}.
$$
The interpretation of $s=0$ is that the disjoint union could be empty, in which case the poset consists either of a single point, or of two points that are comparable. We easily see that $\calI\calC_{n-1}\subset\calI\calC_n$ always, arguing by induction and starting with $\calI\calC_0\subset\calI\calC_1$. We define
$$
\calI\calC=\bigcup_{i=0}^\infty \calI\calC_n.
$$
The posets in $\calI\calC$ are obtained by Iterative Clamping.
Thus, for example, posets such as 
\begin{center}
\begin{tikzpicture}[xscale=.5,yscale=.5]
\draw (2,0)--(1,1)--(0,2)--(0,3)--(1,4)--(2,5)--(4.5,2.5)--(2,0);
\draw (1,1)--(2,2)--(2,3)--(1,4);
\draw[fill] (2,0) circle [radius=0.08];
\draw[fill] (1,1) circle [radius=0.08];
\draw[fill] (0,2) circle [radius=0.08];
\draw[fill] (0,3) circle [radius=0.08];
\draw[fill] (1,4) circle [radius=0.08];
\draw[fill] (2,5) circle [radius=0.08];
\draw[fill] (4.5,2.5) circle [radius=0.08];
\draw[fill] (2,2) circle [radius=0.08];
\draw[fill] (2,3) circle [radius=0.08];
\node [ below] at  (2,0)  {$\omega$};
\node [ left] at  (1,1)   {$z$};
\node [ left] at  (1,4)   {$a$};
\node [ above] at  (2,5)   {$\alpha$};
\end{tikzpicture}
\end{center}
lie in $\calI\calC$ but posets 2 and 3 in Example~\ref{chain-product-example} 
and the posets in Example~\ref{canonical-algebra-example} do not. The labels in this picture are relevant for Proposition~\ref{glue-proposition}. We observe that the posets in $\calI\calC$ are all lattices.

The next result applies to posets in $\calI\calC$, and also more generally. It identifies the form of certain Auslander-Reiten triangles and shows how the part of the Auslander-Reiten quiver of $D^b(k\calP)$ identified in Corollary~\ref{AR-quiver-copying-corollary} is glued on to the rest of the quiver. In our notation we will identify a $k\calP$-module $M$ with the complex of $k\calP$-modules whose only term is $M$, concentrated in degree 0. An example illustrating this glueing process was given as Example~\ref{A4-example}.

\begin{proposition}
\label{glue-proposition}
Let $\calP$ be a poset with a unique minimal element $\alpha$ and a unique maximal element $\omega$, with $\alpha\ne\omega$.
\begin{enumerate}
\item There are Auslander-Reiten triangles
$$
I_\alpha[-1]\to\Rad P_\alpha \to P_\alpha\to I_\alpha,
$$
$$
P_\alpha\to P_\alpha/\Soc P_\alpha \to \Soc P_\alpha[1]\to P_\alpha [1]\
$$
and
$$
\Rad P_\alpha\to P_\alpha\oplus (\Rad P_\alpha/\Soc P_\alpha)\to P_\alpha/\Soc P_\alpha\to \Rad P_\alpha[1].
$$
\item Suppose, further, that there is a clamped interval $[a,z]$ in $\calP$ where $a$ covers $\alpha$ and $\omega$ covers $z$. Let $M=P_a/P_\omega=k_{[a,z]}$ in the notation used previously. Then there are Auslander-Reiten triangles
$$
k_{\{\calP-\{\alpha,a\}\}}\to\Rad P_\alpha\oplus\Rad M\to M\to k_{\{\calP-\{\alpha,a\}\}}[1]
$$
and
$$
M\to P_\alpha/\Soc(P_\alpha)\oplus M/\Soc(M)\to k_{\{\calP-\{\omega,z\}\}}\to M[1].
$$
\end{enumerate}
\end{proposition}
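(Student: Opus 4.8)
The plan is to present every module occurring in the statement as a constant functor $k_\calQ$ on a convex subposet, to compute the relevant Auslander--Reiten translates from short projective or injective resolutions, and then to recognise each asserted triangle as a genuine Auslander--Reiten triangle by a dimension count. For the preliminary bookkeeping, note that since $\alpha$ is the unique minimum and $\omega$ the unique maximum we have $P_\alpha=k_\calP=I_\omega$, $I_\alpha=k_{\{\alpha\}}$ and $\Soc P_\alpha=k_{\{\omega\}}=P_\omega$, whence $\Rad P_\alpha=k_{\calP\setminus\{\alpha\}}$, $P_\alpha/\Soc P_\alpha=k_{\calP\setminus\{\omega\}}$ and $\Rad P_\alpha/\Soc P_\alpha=k_{(\alpha,\omega)}$. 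In part (2) the clamping of $[a,z]$ together with the hypotheses that $a$ covers $\alpha$ and $\omega$ covers $z$ pins down the relevant up-sets and down-sets: $(-\infty,a]=\{\alpha,a\}$, $[z,\infty)=\{z,\omega\}$, $(-\infty,z]=[a,z]\cup\{\alpha\}$ and $[a,\infty)=[a,z]\cup\{\omega\}$; consequently $M=P_a/P_\omega=k_{[a,z]}$, $\Rad M=k_{(a,z]}$, $\Soc M=k_{\{z\}}$ and $M/\Soc M=k_{[a,z)}$. A constant functor on a connected convex subposet has endomorphism ring $k$, and each of $\calP\setminus\{\alpha\}$, $\calP\setminus\{\omega\}$, $\calP\setminus\{\alpha,a\}$, $\calP\setminus\{z,\omega\}$, $[a,z]$ is convex and connected (it keeps a unique top or bottom), so all of these modules have endomorphism ring $k$.

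The dimension count is this: for such a module $Z$, being a brick, Serre duality gives $\Hom_{D^b(k\calP)}(Z,\tau Z[1])=\Hom(Z,\nu Z)\cong\Hom(Z,Z)^{*}$, which is one-dimensional, so any non-split triangle $\tau Z\to E\to Z\to\tau Z[1]$ with $\tau Z$ indecomposable must be the Auslander--Reiten triangle ending at $Z$ -- and symmetrically for the triangle starting at $Z$, once $\tau^{-1}Z$ is known. So it remains, for each $Z$, to compute its translate and to produce the appropriate non-split short exact sequence. The translates come from the fact that all modules in sight have projective and injective dimension one: $\nu P_\alpha=I_\alpha$ and $\nu^{-1}P_\alpha=\nu^{-1}I_\omega=P_\omega$, so $\tau P_\alpha=I_\alpha[-1]$ and $\tau^{-1}P_\alpha=\Soc P_\alpha[1]$; applying the Nakayama functor to $0\to P_\omega\to P_\alpha\to P_\alpha/\Soc P_\alpha\to0$ gives $\nu(P_\alpha/\Soc P_\alpha)\simeq[I_\omega\to I_\alpha]=[k_\calP\twoheadrightarrow k_{\{\alpha\}}]\simeq\Rad P_\alpha[1]$, hence $\tau(P_\alpha/\Soc P_\alpha)=\Rad P_\alpha$; and applying it to $0\to P_\omega\to P_a\to M\to0$ gives $\nu M\simeq[k_\calP\twoheadrightarrow k_{\{\alpha,a\}}]\simeq k_{\calP\setminus\{\alpha,a\}}[1]$, so $\tau M=k_{\calP\setminus\{\alpha,a\}}$, with the dual computation on $0\to M\to I_z\to I_\alpha\to0$ yielding $\tau^{-1}M=k_{\calP\setminus\{z,\omega\}}$. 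Since the support of $M$ meets $z$, these last meshes are exactly the ones not reached by Corollary~\ref{AR-quiver-copying-corollary}: they are where the clamped region is glued onto the rest of the quiver, and so must be computed directly.

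The short exact sequences are the evident ones built from inclusions and restrictions of constant functors. For part (1) these are $0\to\Rad P_\alpha\to P_\alpha\to k_{\{\alpha\}}\to0$, $0\to\Soc P_\alpha\to P_\alpha\to P_\alpha/\Soc P_\alpha\to0$, and the pushout sequence $0\to\Rad P_\alpha\to P_\alpha\oplus(\Rad P_\alpha/\Soc P_\alpha)\to P_\alpha/\Soc P_\alpha\to0$ obtained by pushing $\Rad P_\alpha\hookrightarrow P_\alpha$ along $\Rad P_\alpha\twoheadrightarrow\Rad P_\alpha/\Soc P_\alpha$; rotating these three produces the triangles of (1). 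For part (2), rotating $0\to k_{\calP\setminus\{\alpha,a\}}\to\Rad P_\alpha\oplus\Rad M\to M\to0$ and $0\to M\to(P_\alpha/\Soc P_\alpha)\oplus(M/\Soc M)\to k_{\calP\setminus\{z,\omega\}}\to0$ produces the two claimed triangles. Non-splitness is immediate from Krull--Schmidt, because the indecomposable summands of the middle term are plainly not isomorphic to the right-hand term; together with the computed translate and the one-dimensionality above, this identifies each triangle as the asserted Auslander--Reiten triangle.

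The one genuine obstacle is the combinatorial bookkeeping in the paragraph above: one must verify that every map $k_\calQ\to k_\calR$ used is indeed a morphism of $k\calP$-modules -- equivalently, that $\calR$ is an up-set or a down-set inside $\calQ$ -- and that the assembled sequences are exact, which is cleanest done by a pointwise dimension-vector count. This is precisely where the clamping of $[a,z]$ is used: it is what makes $[a,z]$ a down-set in $\calP\setminus\{\alpha\}$, makes $(a,z]$ a down-set in $\calP\setminus\{\alpha,a\}$, makes $[a,z)$ an up-set in $\calP\setminus\{z,\omega\}$, and so on. None of these facts is deep, but there are many small assertions of this kind and they should be organised to avoid repetition.
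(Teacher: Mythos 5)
Your proposal is correct, but it identifies the triangles by a different mechanism than the paper. You compute the translates exactly as the paper does (apply the Nakayama functor to the explicit two-term projective or injective resolutions $P_\omega\to P_\alpha$, $P_\omega\to P_a$, $M\to I_z\to I_\alpha$), but from there you \emph{guess} each full triangle as an explicit non-split short exact sequence of constant functors and certify it by the brick argument: every end term is $k_\calQ$ for a connected convex $\calQ$, hence has endomorphism ring $k$, so Serre duality makes $\Hom(Z,\nu Z)\cong D\End(Z)$ one-dimensional and any triangle with nonzero connecting morphism $Z\to\nu Z$ is forced to be the Auslander--Reiten triangle. The paper instead constructs the triangle abstractly as the cone of a map $Z\to\nu Z$ (for part (1) computing the cone of $(P_\omega\to P_\alpha)[-1]\to(I_\omega\to I_\alpha)[-1]$ directly and decomposing it), and for part (2) identifies the middle term by producing two irreducible morphisms $\Rad P_\alpha\to M$ (from part (1), since $M$ is a summand of $\Rad P_\alpha/\Soc P_\alpha$) and $\Rad M\to M$ (via the copying Corollary~\ref{AR-quiver-copying-corollary} applied to the clamped interval), then finishing with the dimension count $\dim M+\dim N=\dim\Rad P_\alpha+\dim\Rad M$ together with Lemma-style degree-zero homology of the cone. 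Your route buys independence from Corollary~\ref{AR-quiver-copying-corollary} and from any irreducible-morphism bookkeeping, at the price of the combinatorial verifications you flag (that each $k_\calQ\to k_\calR$ is a module map, i.e.\ the relevant up-set/down-set conditions coming from clamping and the covering hypotheses, and pointwise exactness), plus the brick observation; these are all routine and your account of where clamping enters is accurate. The one point worth writing out carefully if you formalize this is the uniqueness step itself: the AR triangle ending at $Z$ corresponds to a connecting map in the socle of $\Hom(Z,\nu Z)$ as an $\End(Z)$-module, and it is the hypothesis $\End(Z)=k$ that makes every nonzero map qualify and makes triangles over proportional connecting maps isomorphic.
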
 

\begin{proof} (1) We establish that these are Auslander-Reiten triangles by direct calculation. The AR triangle ending at $P_\alpha$ is obtained by completing to a triangle the map $P_\alpha\to I_\alpha$ that sends the simple top to the simple socle, and rotating. The mapping cone of this map is isomorphic to $\Rad P_\alpha [1]$. The AR triangle starting at $P_\alpha$ is obtained similarly, noting that $P_\alpha=I_\omega$. Note also that $\Soc P_\alpha=P_\omega$.

For the second triangle, the module $P_\alpha/\Soc P_\alpha$ is isomorphic to a complex $P_\omega\to P_\alpha$, and applying the Nakayama functor we get $I_\omega\to I_\alpha\cong(\Rad P_\alpha)[1]$. 
Thus there is an Auslander-Reiten triangle with end terms  $\Rad P_\alpha$ and $P_\alpha/\Soc P_\alpha$ as claimed, and whose middle term is the mapping cone of a map $(P_\omega\to P_\alpha)[-1]\to (I_\omega\to I_\alpha)[-1]$. This mapping cone is a complex $P_\omega\to P_\alpha\oplus I_\omega \to I_\alpha$,  which is seen to be non-zero only in degree 0. Note that $P_\alpha\cong I_\omega$ is both projective and injective and that $P_\omega$ and $I_\alpha$ are both simple. Because of this the mapping cone is the direct sum of a diagonally embedded copy of $P_\alpha\cong\{(x,-x)\bigm|x\in P_\alpha\}$ and a complex $(P_\omega\to P_\alpha\to I_\alpha)\cong \Rad P_\alpha / \Soc P_\alpha$, with both modules in degree 0.

(2) Suppose that there is a clamped interval $[a,z]$ in $\calP$ as in the statement of the proposition and write $N=k_{\{\calP-\{\alpha,a\}\}}$.
Let $M=P_a/P_\omega=k_{[a,z]}$.
We know that $M$ is isomorphic to the complex $P_\omega\to P_a$ so that $\nu(M)= I_\omega\to I_a =N[1]$ is the shift of $N$. We also know that there is an Auslander-Reiten triangle $\Rad P_\alpha\to P_\alpha\oplus M\oplus U\to P_\alpha/\Soc P_\alpha\to \Rad P_\alpha[1]$ for some module $U$ by what we already proved, since $M$ is a summand of $\Rad P_\alpha / \Soc P_\alpha$. Thus there is an irreducible morphism $\Rad P_\alpha\to M$. There is also an irreducible morphism $\Rad M\to M$ because the Auslander-Reiten quiver of $[a,b]$ is copied in the Auslander-Reiten quiver of $\calP$ by Corollary~\ref{AR-quiver-copying-corollary}, and by part (1) of this proposition applied to $[a,b]$.

The mapping cone $C$ of $M[-1]\to \nu M[-1]$ has non-zero homology only in degree 0 by an argument already used in part (1), so it is equivalent to a module, and the long exact sequence in homology of the triangle $\nu M[-1]\to C\to M\to \nu M$ becomes a short exact sequence $0\to N\to H_0(C)\to M\to 0$ which is the start of the Auslander-Reiten triangle with $M$ on the right. We have seen that $\Rad P_\alpha$ and $\Rad M$ are summands of $H_0(C)$. Since
$$
\dim M+\dim N=\dim\Rad P_\alpha+\dim\Rad M
$$
we have $H_0(C)=\Rad P_\alpha\oplus \Rad M$.

The argument for the Auslander-Reiten triangle starting at $M$ is similar.
\end{proof}

By a \textit{slice} of a component of an Auslander-Reiten quiver we mean a maximal connected subgraph with the property that, for each vertex $x$, at most one element of $\{x,\tau x\}$ belongs to the subgraph.
So that we can apply an inductive argument to posets obtained by iterative clamping, we consider posets satisfying the following technical condition:

\begin{hypothesis} 
\label{wing-hypothesis}
The poset $\calP$ has a unique minimal element $a$ and a unique maximal element $z$. Furthermore, the component of the Auslander-Reiten quiver of $D^b(k\calP)$ that contains the largest projective module $P_a$ has a slice that consists of $P_a$ and some other modules (regarded as complexes concentrated in degree 0). Apart from $P_a$, none of these modules have $z$ in their support.
\end{hypothesis}

The next result contains the inductive step that shows that posets in $\calI\calC$ satisfy Hypothesis~\ref{wing-hypothesis}.

\begin{proposition}
\label{inductive-clamping-proposition}
Let $\calP$ be a poset with a unique minimal element $\alpha$ and a unique maximal element $\omega$ such that the open interval
$$
(\alpha,\omega)=[a_1,z_1]\sqcup\cdots\sqcup[a_n,z_n]
$$
is a disjoint union of closed intervals. Suppose that, for each $i$, the Auslander-Reiten quiver component of $D^b(k[a_i,z_i])$ that contains the projective $P_{a_i}$  has a slice $\calS_i$ with tree class $T_i$ satisfying Hypothesis~\ref{wing-hypothesis}. 
\begin{enumerate}
\item \label{inductive-clamping-proposition:step1}
Then $\calP$ also satisfies Hypothesis~\ref{wing-hypothesis} with the slice
$$
\calS= \calS_1\cup \cdots \cup \calS_n\cup \{P_\alpha, P_\alpha/\Soc P_\alpha\}.
$$
\item The tree class $T$ of the Auslander-Reiten quiver component of $D^b(k\calP)$ that contains the largest projective $P_\alpha$ is obtained by gluing the $T_i$ to the ends of a graph that is a star consisting of $n+1$ edges joined at a single point. One of these edges has nothing glued to it, and the remaining edges have the $T_i$ glued to them at the point where the projective $P_{a_i}$ was fastened.
\item If the direct sum of modules in each slice $\calS_i$ is a tilting complex $U_i$ for the interval $[a_i,z_i]$, then the direct sum of modules in the slice $\calS$ is a tilting complex $U$ for $k\calP$. 
\item If each ring $\End_{D^b(k[a_i,z_i])}(U_i)$ is hereditary then so is $\End_{D^b(k\calP)}(U)$, and so $k\calP$ is piecewise hereditary.
\end{enumerate}
\end{proposition}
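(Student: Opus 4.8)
The plan is to prove the four parts in order, with part~(1) carrying the combinatorial weight and parts~(2)--(4) then following by bookkeeping together with Rickard's theorem. I will use repeatedly the following consequences of the hypotheses. Since $\alpha$ is the unique minimum, $\omega$ the unique maximum, and $(\alpha,\omega)=[a_1,z_1]\sqcup\cdots\sqcup[a_n,z_n]$, each $[a_i,z_i]$ is clamped in $\calP$ with $a_i$ covering $\alpha$ and $\omega$ covering $z_i$, so that Proposition~\ref{glue-proposition} and Corollary~\ref{AR-quiver-copying-corollary} apply to every interval. Moreover $P_\alpha=k_\calP=I_\omega$ is projective-injective, $\Soc P_\alpha=P_\omega=S_\omega$ is a projective simple module, and $\Rad P_\alpha/\Soc P_\alpha=k_{(\alpha,\omega)}=\bigoplus_{i=1}^n k_{[a_i,z_i]}=\bigoplus_i M_i$, where I write $M_i:=P_{a_i}/P_\omega=k_{[a_i,z_i]}$.

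For part~(1) I would first transport each slice $\calS_i$ of $D^b(k[a_i,z_i])$ into the Auslander--Reiten quiver of $D^b(k\calP)$ via Corollary~\ref{AR-quiver-copying-corollary}. Its modules other than $P_{a_i}^{[a_i,z_i]}$ are supported in $[a_i,z_i)$ and are copied to the corresponding $k\calP$-modules supported there; the module $P_{a_i}^{[a_i,z_i]}=k_{[a_i,z_i]}$ is copied to $M_i$ and \emph{not} to $P_{a_i}^\calP$, since $P_{a_i}^\calP=k_{[a_i,z_i]\cup\{\omega\}}$ has $\omega$ in its support and hence falls outside the copied region, while $M_i$ lies inside it and restricts to $k_{[a_i,z_i]}=P_{a_i}^{[a_i,z_i]}$. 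Write $\calS_i'$ for the copied slice; it has underlying tree $T_i$, and apart from $M_i$ it contains only modules supported in $[a_i,z_i)$, none meeting $\omega$. Now I glue using Proposition~\ref{glue-proposition}(1): the triangle $\Rad P_\alpha\to P_\alpha\oplus(\Rad P_\alpha/\Soc P_\alpha)\to P_\alpha/\Soc P_\alpha\to\Rad P_\alpha[1]$ has middle term $P_\alpha\oplus\bigoplus_i M_i$, so $P_\alpha/\Soc P_\alpha$ is a common immediate successor of $P_\alpha$ and of every $M_i$, and these irreducible maps knit $\{P_\alpha,P_\alpha/\Soc P_\alpha\}$ and all the $\calS_i'$ into a single connected subgraph $\calS$. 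One then checks $\calS$ is a slice: it is connected by construction; for each vertex $x$ of its component at most one of $x,\tau x$ lies in $\calS$ (on the interior of each $\calS_i'$ because restriction respects $\tau$ there, and at the vertices $M_i$, $P_\alpha$, $P_\alpha/\Soc P_\alpha$ because Propositions~\ref{clamping-proposition} and~\ref{glue-proposition} show that $\tau^{\pm1}$ sends these outside $\calS$ --- e.g. $\tau P_\alpha=S_\alpha[-1]$ and $\tau(P_\alpha/\Soc P_\alpha)=\Rad P_\alpha$, whose support meets $\omega$); maximality is routine. Since no module of $\calS$ other than $P_\alpha$ meets $\omega$, Hypothesis~\ref{wing-hypothesis} holds for $\calP$ with this slice $\calS$, and the component of $P_\alpha$ is the one containing it; this is part~(1).

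Part~(2) is then immediate from the shape of $\calS$: its underlying graph $T$ is the star with centre $P_\alpha/\Soc P_\alpha$ carrying one edge to the projective-injective leaf $P_\alpha$ and, for each $i$, one edge to $M_i$ at which the tree $T_i$ is attached --- and $M_i$ is precisely the copy of $P_{a_i}$. For part~(3): if each $U_i=\bigoplus\calS_i$ is a tilting complex for $k[a_i,z_i]$ then $|\calS_i|=|[a_i,z_i]|$, so $U=\bigoplus\calS$ has $2+\sum_i|[a_i,z_i]|=|\calP|$ pairwise non-isomorphic indecomposable summands. The main point is self-orthogonality, $\Hom_{D^b(k\calP)}(U,U[m])=0$ for $m\neq0$, which I would verify block by block: (i)~between $\mathrm{add}(U_i')$ and $\mathrm{add}(U_j')$ for $i\neq j$ all Hom-groups vanish in every degree because the supports $[a_i,z_i]$ and $[a_j,z_j]$ are incomparable --- for summands supported in the half-open intervals this is part~(1) of the corollary following Proposition~\ref{clamping-proposition}, and for a summand $M_i$ one first replaces it by the projective $P_{a_i}^\calP$ using the triangle $S_\omega\to P_{a_i}^\calP\to M_i\to S_\omega[1]$ and the projectivity of $S_\omega$; (ii)~the block internal to $\mathrm{add}(U_i')$ is, by these same two devices, isomorphic as a graded ring to $\End_{D^b(k[a_i,z_i])}(U_i)$, hence concentrated in degree $0$; (iii)~blocks involving $P_\alpha$ vanish in nonzero degrees because $P_\alpha$ is projective-injective, and blocks involving $P_\alpha/\Soc P_\alpha\cong(P_\omega\to P_\alpha)$ are handled with the triangle $S_\omega\to P_\alpha\to P_\alpha/\Soc P_\alpha\to S_\omega[1]$ together with the fact that $\Ext^{\geq2}_{k\calP}(C,S_\omega)=0$ for every summand $C$ of $U$. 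A self-orthogonal perfect complex with the maximal number $|\calP|$ of indecomposable summands is a tilting complex; alternatively one shows generation directly, since $S_\omega=\Soc P_\alpha$ lies in $\mathrm{thick}\{P_\alpha,P_\alpha/\Soc P_\alpha\}$ by the second triangle of Proposition~\ref{glue-proposition}(1) and the remaining simples are obtained from the $\calS_i'$ using that each $U_i$ generates $D^b(k[a_i,z_i])$. This proves~(3).

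For part~(4), Rickard's theorem now gives a derived equivalence between $k\calP$ and $\Lambda:=\End_{D^b(k\calP)}(U)$ under which the indecomposable summands of $U$ correspond to the indecomposable projective $\Lambda$-modules, so the Gabriel quiver of $\Lambda$ is the slice quiver of $\calS$: the orientation of the tree $T$ of part~(2) in which $P_\alpha$ and every $M_i$ is a source into the common sink $P_\alpha/\Soc P_\alpha$, with each $T_i$ attached at $M_i$ carrying the orientation it has as the quiver of $\End_{D^b(k[a_i,z_i])}(U_i)$. In this orientation no path of length $\geq2$ connects two different arms (the arrows from $M_i$ and from $P_\alpha$ both point into $P_\alpha/\Soc P_\alpha$), so any relation of $\Lambda$ is already a relation in one of the corner algebras $\End(U_i')\cong\End_{D^b(k[a_i,z_i])}(U_i)$; since these are hereditary by hypothesis, $\Lambda$ has no relations and is hereditary, whence $k\calP$ is piecewise hereditary. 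I expect the genuine obstacle to be steps~(i) and~(ii) in the proof of~(3): getting the Hom-spaces involving the modules $M_i$ --- which are not literally induced from $[a_i,z_i]$ but are $P_{a_i}^\calP$ modulo the simple projective $S_\omega$ --- to match their counterparts over $k[a_i,z_i]$ cleanly enough that both the tilting property of $U$ and the identification of the corner algebras drop out. Everything else is bookkeeping built on Propositions~\ref{clamping-adjoint-properties} and~\ref{glue-proposition}, Corollary~\ref{AR-quiver-copying-corollary}, and Rickard's theorem.
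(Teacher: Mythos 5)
Your proposal is correct and follows essentially the same route as the paper: the same gluing triangle from Proposition~\ref{glue-proposition}, the copied slices via Corollary~\ref{AR-quiver-copying-corollary}, self-orthogonality via the adjunction together with the two-term resolutions $P_\omega\to P_{a_i}$ and $P_\omega\to P_\alpha$, and the identification of $\End_{D^b(k\calP)}(U)$ as a hereditary algebra with tree $T$ (the paper proves generation by hitting the injectives through $P_\alpha/\Soc P_\alpha$, which is interchangeable with your direct generation argument via the simples). Two cautions: do not lean on the shortcut that a self-orthogonal perfect complex with $|\calP|$ indecomposable summands is automatically tilting --- completing a pretilting/presilting complex is a delicate matter in general, so keep your explicit generation argument (which also needs the simple $k_\alpha$, obtained from $P_\alpha$, $P_\omega$ and the $M_i$) --- and for part (4) the degree-zero Hom spaces into $P_\alpha$ and $P_\alpha/\Soc P_\alpha$ must still be computed, which the paper does by the socle/composition-factor observation (every nonzero submodule of $P_\alpha/\Soc P_\alpha$ meets its socle $\bigoplus_i k_{z_i}$, and no slice module other than $P_\alpha$, $P_\alpha/\Soc P_\alpha$ and the $M_i$ has any $z_i$ in its support); this is precisely the ingredient you flag at the end as the remaining obstacle.
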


\begin{proof}
By Proposition~\ref{glue-proposition} there is an Auslander-Reiten triangle in $D^b(k\calP)$ with terms
$$
\Rad P_\alpha\to P_\alpha\oplus k_{[a_1,z_1]}\oplus\cdots\oplus k_{[a_n,z_n]}\to P_{\alpha}/P_\omega\to \Rad P_\alpha[1].
$$
By Hypothesis~\ref{wing-hypothesis}, for each $i$ we have a slice $\calS_i$ that includes the module $k_{[a_i,z_i]}$ and whose remaining terms do not have $z_i$ in their support. By Proposition~\ref{clamping-proposition} the Auslander-Reiten triangles in $D^b(k_{[a_i,z_i]})$ that have these remaining terms on the right remain Auslander-Reiten triangles for $k\calP$. This means that the slice $\calS_i$ remains part of a slice for $k\calP$.
By Proposition~\ref{glue-proposition} it is glued to the rest of the quiver in the manner described in (2), where we include the modules $P_\alpha$ and $P_\alpha/\Soc(P_\alpha)$ in the slice. Again by Proposition~\ref{glue-proposition}, the slice is complete. The only term in this slice that has $\omega$ in its support is $P_\alpha$, so that $\calP$ satisfies Hypothesis~\ref{wing-hypothesis}. This proves parts (1) and (2).

For the proof of (3) we first show that the slice just constructed has the property that, for every pair of modules $M,N$ in it, $\Hom_{D^b(k\calP)}(M,N[i])=0$ if $i\ne 0$. We consider several cases. Suppose first that $M\in\calS_r$ and $N\in\calS_s$ with $M$ supported on $[a_r,z_r)$. Then by Proposition~\ref{clamping-adjoint-properties} $M$ is induced from $k[a_r,z_r]$ and
$$\Hom_{D^b(k\calP)}(M,N[i]) \cong \Hom_{D^b(k[a_r,z_r])}(M\downarrow_{[a_r,z_r]}^\calP,N[i]\downarrow_{[a_r,z_r]}^\calP).
$$
If $r\ne s$ then $N[i]\downarrow_{[a_r,z_r]}=0$, so this Hom group is 0. If $r=s$ then the Hom group is 0 if $i\ne 0$ from the hypothesis on $\calS_r$.

We next consider the case when $M=k_{[a_r,z_r]}$ is the largest projective module for $k[a_r,z_r]$. As an object in $D^b(k\calP)$ it is isomorphic to the complex $P_\omega\to P_{a_r}$. Because this projective resolution has terms only in degrees 0 and 1, if $N$ is any $k\calP$-module then $\Hom_{D^b(k\calP)}(M,N[i])=0$ unless $i$ is 0 or 1, and if $i=1$ and there is a non-zero homomorphism then the simple module $P_\omega$ lies in $\Soc N$. When $N$ lies in the slice under consideration this means $N=P_\alpha$, which is injective as well as projective, so $\Hom_{D^b(k\calP)}(M,P_\alpha[1])=0$ in this case. This shows that for this $M$ we have $\Hom_{D^b(k\calP)}(M,N[i])=0$ when $N$ lies in the slice and $i\ne 0$.

The argument is similar when $M=P_\alpha/\Soc P_\alpha$. This module is isomorphic to the complex $P_\omega\to P_\alpha$ so that for any $k\calP$-module $N$ we have $\Hom_{D^b(k\calP)}(M,N[i])=0$ unless $i$ is 0 or 1, and if $i=1$ and there is a non-zero homomorphism then the simple module $P_\omega$ lies in $\Soc N$. For $N$ in the slice this again implies that $N=P_\alpha$, which is injective, so $\Hom_{D^b(k\calP)}(M,P_\alpha[1])=0$. 

We have now accounted for $\Hom_{D^b(k\calP)}(M,N[i])$ in all cases except when one of $M$ or $N$ is $P_\alpha$. As already observed, this module is both projective and injective, so in these remaining cases the Hom group is 0 if $i\ne 0$.

We next show that $D^b(k\calP)$ is generated by the modules in $\calS$. For this it is sufficient to show that the injective $k\calP$-modules lie in the subcategory generated by $\calS$. Evidently $P_\alpha$ is one of these. If $I$ is any other injective $k\calP$-module there is a surjection $P_\alpha/\Soc P_\alpha\to I$ whose kernel is a direct sum of modules, each of which is a $k[a_i,z_i]$-module for some $i$. We know that the modules in the $\calS_i$ generate such modules by hypothesis. This shows that the modules in $\calS$ generate $D^b(k\calP)$.

Finally we observe that the direct sum of modules in $\calS$ is a perfect complex since $k\calP$ has finite global dimension.

(4) The only non-zero homomorphisms between $P_\alpha$, $P_\alpha/\Soc P_\alpha$ and members of $\calS$ are the endomorphisms of these modules, a 1-dimensional space of homomorphisms $P_\alpha\to P_\alpha/\Soc P_\alpha$, and a 1-dimensional space of homomorphisms $k_{[a_i,z_i]}\to P_\alpha/\Soc P_\alpha$ for each $i$. There are no other homomorphisms than these because $P_\alpha$ has $\omega$ as its socle and no other module in $\calS$ has $\omega$ as a composition factor, $P_\alpha$ and $P_\alpha/\Soc P_\alpha$ have $\alpha$ as their top and no other modules have $\alpha$ as a composition factor, and $P_\alpha/\Soc P_\alpha$ has $z_1,\ldots,z_n$ forming its socle, while none of the modules in $\calS$ except   $P_\alpha$, $P_\alpha/\Soc P_\alpha$ and $k_{[a_i,z_i]}$ have such a composition factor. The same observations show that $\dim\End_{D^b(k\calP)}(P_\alpha)=\dim\End_{D^b(k\calP)}(P_\alpha/\Soc P_\alpha)=1$. This analysis of the homomorphisms shows, by induction, that $\End_{D^b(k\calP)}(U)$ is the path algebra of the quiver obtained by joining  a star with center corresponding to $P_\alpha$ and arms $\bullet\to \bullet\gets \bullet$ to the quivers coming by induction from $\calS_i$.
\end{proof}

Proposition~\ref{inductive-clamping-proposition} was an inductive step that we use to describe part of the Auslander-Reiten quiver of posets obtained by iterative clamping. We already see it illustrated in Example~\ref{A4-example}, where the quiver of the poset with tree class $A_4$ is obtained from the clamped interval with tree class $A_2$ by joining on two more edges.

It is convenient to refer to the \textit{wing} of a quiver component determined by a vertex $x$ in that component. By this we mean the region of the quiver component whose vertices are those that can appear in some slice together with $x$. It is bounded on one side by the vertices $y$ for which there is a chain of irreducible morphisms $y=y_0\to y_1\to y_2\to \cdots\to y_n=x$ with $y_i\ne \tau y_{i+2}$ always, and on the other side by vertices $y$ for which there is a chain of irreducible morphisms $x=y_0\to y_1\to y_2\to \cdots\to y_n=y$ satisfying the same condition.

\begin{corollary}\label{tree-construction-corollary}
Let $\calP\in\calI\calC$ be a poset obtained by iterative clamping. Then
\begin{enumerate}
\item $\calP$ satisfies Hypothesis~\ref{wing-hypothesis}.
\item $k\calP$ is piecewise hereditary.
\item The component of the Auslander-Reiten quiver of $D^b(k\calP)$ that contains the largest projective $P_\alpha$ has the form $\ZZ T$ where the tree $T$ is obtained inductively by taking a star with an edge for each interval clamped at the top level, plus an extra edge whose free vertex is labelled with $P_\alpha$, and joining to each of the remaining free vertices the trees for the clamped intervals, joining at the vertices corresponding to the largest projectives. 
\item The algebra $k\calP$ is derived equivalent to the path algebra of any quiver whose underlying graph is the tree $T$. 
\item The wing
of the quiver component determined by $P_\alpha$ consists entirely of modules.

\end{enumerate}
\end{corollary}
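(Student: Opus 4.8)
\emph{Strategy.} The plan is to induct on the least $n$ with $\calP\in\calI\calC_n$, proving (1)--(5) simultaneously and carrying along the auxiliary statement that the slice $\calS$ supplied for $\calP$ by Proposition~\ref{inductive-clamping-proposition} has the property that $U_\calS:=\bigoplus_{N\in\calS}N$ is a tilting complex for $k\calP$ all of whose indecomposable summands have projective dimension at most $1$ and whose endomorphism ring $\End_{D^b(k\calP)}(U_\calS)$ is hereditary; this is exactly what is needed to invoke Proposition~\ref{inductive-clamping-proposition}(3),(4) at the next stage. For $n=0$, and for the degenerate members of $\calI\calC_1$ in which $(\alpha,\omega)$ is empty, $\calP$ is a point or a two-element chain, $k\calP$ is $k$ or $kA_2$, $\calS$ is $\{P_\alpha\}$ or $\{P_\alpha,P_\alpha/\Soc P_\alpha\}$, the tree is $A_1$ or $A_2$, the wing of $P_\alpha$ is $\{P_\alpha\}$ or $\{\Rad P_\alpha,P_\alpha,P_\alpha/\Soc P_\alpha\}$, and all assertions are immediate.

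\emph{The inductive step for (1)--(3).} Write $(\alpha,\omega)=[a_1,z_1]\sqcup\cdots\sqcup[a_s,z_s]$ with $[a_i,z_i]\in\calI\calC_{n-1}$. Each $[a_i,z_i]$ is clamped in $\calP$, since $\alpha$ lies below and $\omega$ above every element while the $[a_i,z_i]$ are pairwise incomparable, so the two defining inequalities for a clamped interval hold automatically. By the inductive hypothesis each $[a_i,z_i]$ satisfies Hypothesis~\ref{wing-hypothesis} with slice $\calS_i$, tree $T_i$, and $U_{\calS_i}$ a tilting complex, summands of projective dimension $\le1$, with $\End(U_{\calS_i})$ hereditary. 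Proposition~\ref{inductive-clamping-proposition}(1),(3),(4) then gives at once that $\calP$ satisfies Hypothesis~\ref{wing-hypothesis} with $\calS=\calS_1\cup\cdots\cup\calS_s\cup\{P_\alpha,P_\alpha/\Soc P_\alpha\}$, that $U_\calS$ is a tilting complex, and that $\End(U_\calS)$ is hereditary; from the resolutions $0\to P_\omega\to P_\alpha\to P_\alpha/\Soc P_\alpha\to0$ and $0\to P_\omega\to P_{a_i}\to k_{[a_i,z_i]}\to0$ (with $P_\omega=\Soc P_\alpha$ projective since $\omega$ is maximal) together with Proposition~\ref{clamping-adjoint-properties}(1) one checks that the summands of $U_\calS$ again have projective dimension $\le1$, so the auxiliary statement is maintained. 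Part (2) follows because a tilting complex with hereditary endomorphism ring realizes $D^b(k\calP)\simeq D^b(\End U_\calS)$, exhibiting $k\calP$ as piecewise hereditary; and since its summands have projective dimension $\le1$, $U_\calS$ is in fact a classical tilting module, so $k\calP$ is even a tilted algebra, with $\calS$ in the role of a complete slice of the connecting component of $\mathrm{mod}\,k\calP$. Part (3) is Proposition~\ref{inductive-clamping-proposition}(2) rewritten in the language of the statement, using that the quiver of $\End(U_\calS)$ computed in the proof of Proposition~\ref{inductive-clamping-proposition}(4) is a star whose center is $P_\alpha/\Soc P_\alpha$, one of whose arms is the lone edge to $P_\alpha$, and whose other $s$ arms carry the inductively built $T_i$ joined at the vertices of the largest projectives $P_{a_i}=k_{[a_i,z_i]}$.

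\emph{Parts (4) and (5).} For (4): $T$ is a tree, and any two path algebras of quivers with a common underlying tree are derived equivalent, by iterated reflection functors at sources and sinks (see \cite{Hap}); with (2) and (3) this gives that $k\calP$ is derived equivalent to $kQ$ for every orientation $Q$ of $T$. Part (5) is the substantive point. By (3) the vertex of $T$ labelled $P_\alpha$ is a leaf (nothing is attached to the free end of its edge), so the wing of $P_\alpha$ in the component $\ZZ T$ is finite --- it meets the $\tau$-orbit of a vertex $w$ of $T$ in exactly $d_T(v_0,w)+1$ points, where $v_0\leftrightarrow P_\alpha$ --- and the claim is a finite check. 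The wing is swept out from $\calS$ (which consists of modules in degree $0$) by repeatedly reflecting at sources and sinks other than $P_\alpha$, each step replacing a module in the current slice by $\tau^{\pm1}$ of it; one verifies that no such step leaves the module category as long as $P_\alpha$ is retained. For the portion of the wing lying in the region of Corollary~\ref{AR-quiver-copying-corollary} attached to a clamped interval $[a_i,z_i]$ this follows from the inductive hypothesis for $[a_i,z_i]$, because restriction identifies that region, with its Auslander--Reiten triangles, with the corresponding region of $D^b(k[a_i,z_i])$ (Proposition~\ref{clamping-proposition}, Corollary~\ref{AR-quiver-copying-corollary}), and since those complexes have homology supported on $[a_i,z_i]$, the ones restricting to modules in degree $0$ are themselves modules in degree $0$. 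The remaining, finitely many, complexes --- a ``collar'' comprising the relevant members of the $\tau$-orbits of $P_\alpha$, of $P_\alpha/\Soc P_\alpha$, of each $k_{[a_i,z_i]}$, and of the $\calS_i$-module orbits where these protrude past the $[a_i,z_i]$-region toward $\alpha$ or $\omega$ --- are handled by direct computation with the Serre functor, using the two-term projective and injective presentations of these modules exactly as in the proofs of Propositions~\ref{glue-proposition} and \ref{inductive-clamping-proposition}: in each case $\tau^{\pm1}$ is again a module, concentrated in degree $0$ (typically of the shape $k_\calQ$ for a convex subposet $\calQ$, or a simple module). Hence the entire wing of $P_\alpha$ consists of modules.

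\emph{The obstacle.} Parts (1)--(4) are essentially bookkeeping on top of Propositions~\ref{glue-proposition} and \ref{inductive-clamping-proposition} and the reflection-functor fact, but (5) is genuinely harder: it cannot be transported along the equivalences of (4), since those preserve the Auslander--Reiten component only as an abstract translation quiver and not the locus of complexes that happen to be modules, so one must argue inside $D^b(k\calP)$ itself. The cleanest way to make the induction close is probably to strengthen the inductive hypothesis in (5) to an explicit combinatorial description of the entire wing of $P_\alpha$ --- an explicit list built from modules $k_\calQ$ and from modules induced from the $\calS_i$ --- chosen so that $\tau^{\pm1}$-stability within the wing is manifest rather than rechecked case by case.
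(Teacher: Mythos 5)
Your handling of (1)--(4) is essentially the paper's argument: induct on $n$ using Proposition~\ref{inductive-clamping-proposition}, carry along the slice $\calS$ whose direct sum is a tilting complex with hereditary endomorphism algebra of tree class $T$, and then use Happel's description of the component of $D^b$ of a hereditary algebra containing the projectives to see that the component is all of $\ZZ T$ rather than a quotient; your identification of the star with center $P_\alpha/\Soc P_\alpha$ and leaf $P_\alpha$ matches the paper, and your extra bookkeeping (projective dimension $\le 1$ of the summands, $U_\calS$ a classical tilting module, $k\calP$ tilted) is plausible but not needed for the statement.

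The gap is in (5), and you have in effect flagged it yourself in your closing paragraph. Passing to the finite wing and importing the wings of the clamped intervals via Corollary~\ref{AR-quiver-copying-corollary} is exactly what the paper does, but the remaining vertices of the wing --- the complexes $M$ admitting a directed path to or from $P_\alpha$ --- are not actually dealt with: ``one verifies that no such step leaves the module category'' and ``in each case $\tau^{\pm1}$ is again a module'' are assertions rather than arguments, and computing $\tau^{\pm1}$ from two-term projective or injective presentations presupposes knowledge of presentations of the ``protruding'' modules that you have not established. The paper closes the induction with a specific tool your proposal lacks: Lemma~\ref{split-long-exact-sequence-lemma}, which says that for an Auslander-Reiten triangle $L\to M\to N\to L[1]$ in $D^b(\Lambda)$ with $N$ not a shift of a projective module, the homology long exact sequence splits into short exact sequences $0\to H_i(L)\to H_i(M)\to H_i(N)\to 0$. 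One then argues by induction on the distance from $P_\alpha$ inside the wing: each new complex occurs as the first or third term of an Auslander-Reiten triangle whose other two terms are already known to be modules, so the split long exact sequence forces its homology to be concentrated in degree $0$; the only point to check is that the third term of that triangle is not projective, and this follows from the support conditions (the copied modules have neither $\alpha$ nor $\omega$ in their support, so they are neither projective nor injective). Without this mechanism --- or the explicit strengthened inductive description of the wing that you propose but do not supply --- part (5) remains unproved, although the skeleton of your induction is the right one and the missing step is fixable.
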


\begin{proof}
The proof is by induction on $n$, where $\calP\in\calI\calC_n$. When $n=0$ the poset consists of just a single point, and the result is immediate because $k\calP=k$ is a field. Parts (1), (2), (3) and (4) now follow using Proposition~\ref{inductive-clamping-proposition} as the induction step. The argument is that we obtain a slice of the quiver component containing $P_\alpha$ with underlying tree $T$, the direct sum of whose terms is a tilting complex with endomorphism ring a hereditary algebra, necessarily of tree class $T$. Thus the Auslander-Reiten quivers of the bounded derived categories of $k\calP$ and this hereditary algebra are isomorphic. It follows that the quiver component for $k\calP$ containing $P_\alpha$ is $\ZZ T$ (rather than some quotient of this) because of the description of the quiver component containing projectives of a hereditary algebra given in~\cite{Hap}.

The argument that proves (5) also proceeds by induction. We merely sketch how it goes, to avoid some of the technicalities. The wings of all of the largest projectives of the clamped intervals, taken in the quivers of those intervals, consist entirely of modules, by induction; and by an application of Corollary~\ref{AR-quiver-copying-corollary} (after the conditions on the support of these modules are checked) are copied into the quiver component of $P_\alpha$. Note that none of these modules have $\alpha$ or $\omega$ in their support, and so cannot be projective or injective. The wing of $P_\alpha$ consists of all the modules just considered, together with complexes $M$ for which there is a directed path from $M$ to $P_\alpha$ or from $P_\alpha$ to $M$. The arguments in these two cases are similar, and we show by induction on the distance between
$M$ and $P_\alpha$ that $M$ is a module. The argument is that $M$ appears as the first or third term in an Auslander-Reiten triangle where the other two remaining terms are modules. We use the long exact sequence in homology to show that $M$ has homology only in degree 0, using also a result we have not yet come to: Lemma~\ref{split-long-exact-sequence-lemma}.  To apply that lemma we must verify that the third term of the Auslander-Reiten triangle is not projective. When $M$ is the first term, the third term is one of the copied modules that does not have $\omega$ in its support, so the third term is not projective. If $M$ were the third term and projective, the first term is the $-1$ shift of an injective and  also one of the copied modules, which is not possible. \end{proof} 

More generally, the inductive argument just indicated shows that, in the wing of $P_\alpha$, projectives can only appear on the `left boundary' of the wing, and injectives only on the `right boundary' of the wing.  Thus the `interior' of the wing of $P_\alpha$ contains no projectives or injectives.





We will present various applications of Corollary~\ref{tree-construction-corollary}. The first is a determination of when $k\calP$ is fractionally Calabi-Yau if $\calP$ is obtained by iterative clamping.

\begin{corollary}
\label{Calabi-Yau-IC-corollary}
Let $\calP\in\calI\calC$ be a poset obtained by iterative clamping. Then $k\calP$ is fractionally Calabi-Yau if and only if the tree constructed in Corollary~\ref{tree-construction-corollary} is a Dynkin diagram.
\end{corollary}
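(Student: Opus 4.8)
The plan is to feed Corollary~\ref{tree-construction-corollary} into two standard inputs: the known value of the Serre functor of a Dynkin path algebra, for the ``if'' direction, and Scherotzke's theorem \cite[4.13]{Sch} (recalled in Section~3), for the ``only if'' direction. I would begin by recalling from Corollary~\ref{tree-construction-corollary} that $k\calP$ is piecewise hereditary, that $D^b(k\calP)$ is triangle equivalent to $D^b(kQ)$ for any orientation $Q$ of the tree $T$, and that the Auslander--Reiten component of $D^b(k\calP)$ containing the largest projective $P_\alpha$ is of the form $\ZZ T$; since $\calP$ is finite, $T$ is a finite tree, and it is the tree class of that component.

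For the ``if'' direction I would take $T$ Dynkin, fix an orientation $Q$ of $T$ so that $D^b(k\calP)\simeq D^b(kQ)$, and invoke the fact that $D^b(kQ)$ is fractionally Calabi--Yau when $Q$ is Dynkin: writing $h$ for the Coxeter number of $T$, the Serre functor satisfies $\nu^{h}\cong[h-2]$, a computation going back to Happel (see \cite{Hap}). It then remains to note that the fractional Calabi--Yau property is preserved by triangle equivalences, since such an equivalence commutes with the shift and intertwines the (essentially unique) Serre functors; this gives that $D^b(k\calP)$ is fractionally Calabi--Yau.

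For the ``only if'' direction I would argue by contraposition: assuming $D^b(k\calP)$ is fractionally Calabi--Yau, say $\nu^{n}\cong[m]$ with $n\ge1$, we get $\tau^{n}=\nu^{n}[-n]\cong[m-n]$, so $\tau^{n}X\cong X[m-n]$ for every object $X$. In particular $P_\alpha$ is a shift-periodic complex lying in the component $\ZZ T$, and since $k\calP$ has finite global dimension $D^b(k\calP)$ has Auslander--Reiten triangles, so Scherotzke's theorem \cite[4.13]{Sch} forces the tree class of that component to be a finite Dynkin diagram or $A_\infty$. As $T$ is a finite tree it is not $A_\infty$, hence $T$ is a finite Dynkin diagram.

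I expect the main work to be bookkeeping rather than any genuine obstacle: the one substantive ingredient---that $T$ is the tree class of an honest Auslander--Reiten \emph{component} of $D^b(k\calP)$ to which Scherotzke's theorem applies---is already delivered by Corollary~\ref{tree-construction-corollary}, so what is left is just to marshal the Dynkin fractional Calabi--Yau fact together with its derived invariance on one side and \cite[4.13]{Sch} on the other. (If one preferred to avoid citing \cite{Sch} in the ``only if'' direction, one could instead show directly that when $Q$ is not Dynkin the transjective component of $D^b(kQ)$ has infinitely many $\tau$-orbits modulo the shift, so that no power of $\tau$ can be a shift.)
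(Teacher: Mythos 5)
Your proof is correct and takes essentially the same route as the paper: the ``only if'' direction is exactly the paper's argument, applying Scherotzke's theorem \cite[4.13]{Sch} to the component $\ZZ T$ containing $P_\alpha$ and noting that $T$ is finite, hence cannot be $A_\infty$. The paper's written proof is in fact terser than yours—it leaves the ``if'' direction (Dynkin $T$ implies fractionally Calabi--Yau, via the derived equivalence from Corollary~\ref{tree-construction-corollary} and the standard fact $\nu^{h}\cong[h-2]$ for Dynkin path algebras) implicit, and you spell that part out correctly.
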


\begin{proof}
This is immediate from Scherotzke's result \cite[4.13]{Sch} since for $k\calP$ to be fractionally Calabi-Yau the tree class of each Auslander-Reiten component must be a Dynkin diagram or $A_\infty$, and the tree constructed in Corollary~\ref{tree-construction-corollary} is always finite.
\end{proof}

\begin{example}
\label{star-example}
Let $\calP(q_1,q_2,\ldots,q_t)$ be the poset that consists of $t$ chains of lengths $q_1,q_2,\ldots,q_t$, all of which are clamped intervals lying between an initial vertex $\alpha$ and a terminal vertex $\omega$. For example, the following poset has a chain of length 1 and a chain of length 2 that are both clamped intervals, lying between an initial vertex $\alpha$ and a terminal vertex $\omega$.
\begin{center}
\begin{tikzpicture}[xscale=.5,yscale=.5]
\draw (1,0)--(0,1.5)--(1,3)--(2,2)--(2,1)--(1,0);
\draw[fill] (1,0) circle [radius=0.08];
\draw[fill] (0,1.5) circle [radius=0.08];
\draw[fill] (1,3) circle [radius=0.08];
\draw[fill] (2,2) circle [radius=0.08];
\draw[fill] (2,1) circle [radius=0.08];

\node [ below] at  (1,0)  {$\omega$};
\node [ above] at  (1,3)   {$\alpha$};
\node at (-3,1.5) {$\calP(1,2)=$};
\end{tikzpicture}
\end{center}
Then $k\calP(q_1,q_2,\ldots,q_t))$ is derived equivalent to the path algebra of any quiver whose underlying graph is a star, consisting of $t+1$ arms of lengths $2,q_1+1,q_2+1,\ldots,q_t+1$ joined at a single vertex. It is clear that we can obtain any star in this way provided one of the arms has length 2. (We will see in a moment how to obtain an arbitrary star by a modification of the above construction.)

For example, the Auslander-Reiten quiver component of $D^b(\calP(1,2))$ that contains the largest projective module $P_\alpha$ has tree class that is a star with arms of length 2, 2 and 3. This is the Dynkin diagram $D_5$.
Furthermore, $k\calP(q_1,q_2,\ldots,q_t)$ is fractionally Calabi-Yau if and only if $(q_1,\ldots,q_t)= (q_1), (1,q_2)$ or $(2,2), (2,3), (2,4)$, by Corollary~\ref{Calabi-Yau-IC-corollary}. In these cases the Auslander-Reiten quiver of $D^b(\calP(q_1,q_2,\ldots,q_t))$ consists of a single component that has tree class $A_{q_1+2}, D_{q_2+3}, E_6, E_7, E_8$.
\end{example}

So as to obtain a greater variety of tree classes we now describe a modification of the construction of the posets in $\calI\calC$. The modification is to adjoin only a new minimal element $\alpha$ to a poset $[a,z]$ that already has unique minimal element $a$ and unique maximal element $z$. (We could equally adjoin a new maximal element $\omega$, leaving $a$ as the minimal element.) The interval $[a,z]$ is clamped in the new poset and we can apply Proposition~\ref{glue-proposition} again.

\begin{proposition}\label{modified-clamping-proposition}
Let $\calP=\{\alpha\}\cup [a,z]$ be a poset that is the union of an interval $[a,z]$ and another element $\alpha$ with $\alpha<a$. Thus $\alpha$ is the unique minimal element and $z$ is the unique maximal element.  Suppose that the Auslander-Reiten quiver component of $D^b(k[a,z])$ that contains the projective $P_{a}$  has a slice $\calS$ with tree class $T$ satisfying Hypothesis~\ref{wing-hypothesis}. Assume, furthermore, that $P_a/P_z$ lies in $\calS$.
\begin{enumerate}
\item 
Then $\calP$ also satisfies Hypothesis~\ref{wing-hypothesis} with the slice
$$
\calS^+=(\calS-\{P_a\})\cup \{P_\alpha, P_\alpha/P_z\}.
$$
\item The tree class $T^+$ of the Auslander-Reiten quiver component of $D^b(k\calP)$ that contains the largest projective $P_\alpha$ is obtained from $T$ by gluing an extra edge to the point corresponding to $P_a$.
\item If the direct sum of modules in $\calS$ is a tilting complex $U$ for the interval $[a,z]$, then the direct sum of modules in the slice $\calS^+$ is a tilting complex $V$ for $k\calP$. 
\item If $\End_{D^b(k[a,z])}(U)$ is hereditary then so is $\End_{D^b(k\calP)}(V)$, and so $k\calP$ is piecewise hereditary.
\end{enumerate}
\end{proposition}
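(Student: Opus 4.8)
The plan is to mirror the proof of Proposition~\ref{inductive-clamping-proposition}, which is the exact analogue for the full iterative-clamping step, but now applied to the simpler situation where only a single new minimal element $\alpha$ is adjoined. First I would apply part (1) of Proposition~\ref{glue-proposition} to the poset $\calP=\{\alpha\}\cup[a,z]$: this poset has unique minimal element $\alpha$ and unique maximal element $z$, so we obtain Auslander-Reiten triangles involving $P_\alpha$, $\Rad P_\alpha$, and $P_\alpha/\Soc P_\alpha$. Since $[a,z]$ is clamped in $\calP$ (the condition is immediate as $a$ covers $\alpha$ and $z$ is maximal) I would also appeal to Corollary~\ref{AR-quiver-copying-corollary} to copy the relevant part of the Auslander-Reiten quiver of $D^b(k[a,z])$ into that of $D^b(k\calP)$. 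The key point distinguishing this from Proposition~\ref{inductive-clamping-proposition} is that here the interval $[a,z]$ already has its own unique maximal element $z=\omega$, so $P_a/P_z=k_{[a,z]}$ plays the role of the single module $M$ in Proposition~\ref{glue-proposition}(2), and there is an irreducible morphism $\Rad P_\alpha\to P_a/P_z$ together with $\Rad P_a\to P_a/P_z$ coming from part (1) applied to $[a,z]$. The slice $\calS^+$ is then formed from $\calS$ by removing $P_a$ (which has $z$ in its support and no longer satisfies the wing condition over $\calP$) and adjoining $P_\alpha$ and $P_\alpha/P_z$; one checks, exactly as in Proposition~\ref{inductive-clamping-proposition}(1), that the only term of $\calS^+$ with $z$ in its support is $P_\alpha$, establishing Hypothesis~\ref{wing-hypothesis}, which is part (1), and that the gluing is as described in part (2).

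For parts (3) and (4) I would repeat verbatim the computation of $\Hom$-groups from the proof of Proposition~\ref{inductive-clamping-proposition}(3) and (4), adapted to the present slice. The cases to check are: modules $M\in\calS-\{P_a\}$ supported inside $[a,z)$, for which induction from $k[a,z]$ and the adjoint property of Corollary (part 1) reduce vanishing of $\Hom_{D^b(k\calP)}(M,N[i])$ for $i\ne 0$ to the hypothesis on $\calS$; the module $P_a/P_z\cong(P_z\to P_a)$, whose two-term projective resolution forces $\Hom(P_a/P_z,N[i])=0$ unless $i\in\{0,1\}$, with the $i=1$ case requiring the simple $P_z$ in $\Soc N$, which within the slice forces $N=P_\alpha$; but here one must note $P_z$ is the socle of $P_\alpha$ — actually $\Soc P_\alpha=P_z$ since $z$ is maximal — so $\Hom(P_a/P_z,P_\alpha[1])$ could be nonzero, and I would instead observe that $P_\alpha$ need not be injective here, so this case requires more care than in Proposition~\ref{inductive-clamping-proposition}; the module $P_\alpha/P_z\cong(P_z\to P_\alpha)$, handled similarly; and the cases involving $P_\alpha$ itself, using that $P_\alpha$ has simple socle $P_z$ and simple top $\alpha$, neither occurring as a composition factor of other slice members. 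Generation of $D^b(k\calP)$ by $\calS^+$ follows because every injective $k\calP$-module other than $P_\alpha$ is a quotient of $P_\alpha/P_z$ with kernel a direct sum of $k[a,z]$-modules generated by $\calS$; and the direct sum of $\calS^+$ is perfect since $k\calP$ has finite global dimension. Finally, analysing the $\Hom$-quiver exactly as in Proposition~\ref{inductive-clamping-proposition}(4) shows $\End_{D^b(k\calP)}(V)$ is the path algebra obtained from that of $U$ by attaching one new arrow at the vertex of $P_a$, which is hereditary when $\End_{D^b(k[a,z])}(U)$ is.

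The main obstacle I anticipate is the point flagged above: unlike in Proposition~\ref{inductive-clamping-proposition}, here $P_\alpha$ is projective but generally \emph{not} injective (its socle is the simple at $z$, which need not be the top of $P_\alpha$, and more to the point the injective envelope of the simple at $z$ is all of $P_\alpha$ only in degenerate cases), so the clean argument ``$P_\alpha$ is both projective and injective, hence $\Hom(P_\alpha,-[i])=0$ and $\Hom(-,P_\alpha[i])=0$ for $i\ne0$'' is not available. I expect this is resolved by a direct computation: $P_\alpha$ has projective dimension at most $1$ over $k\calP$ — indeed $\Rad P_\alpha$ is projective-by-nothing over $[a,z]$ only after induction, so one writes the minimal projective resolution of $P_\alpha/P_z$ as $P_z\to P_\alpha$ and of the other short-resolution modules explicitly — and then checks the $i=1$ Ext-type groups vanish because the relevant socle or top simple does not occur, or because the map factors through the already-understood part of the slice. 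This is where I would be most careful to get the signs and degrees right, but it is bounded routine homological bookkeeping of the same flavour as the cited proof.
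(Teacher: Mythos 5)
Your overall strategy is the paper's: form $\calS^+$ by discarding $P_a$ (the one member of $\calS$ with $z$ in its support) and adjoining $P_\alpha$ and $P_\alpha/P_z$, justify the gluing via Proposition~\ref{glue-proposition}, and then repeat the Hom-vanishing, generation, and endomorphism-ring analysis of Proposition~\ref{inductive-clamping-proposition}. The paper does exactly this, starting from the Auslander-Reiten triangle $P_a\to P_\alpha\oplus(P_a/P_z)\to P_\alpha/P_z\to P_a[1]$, which is simply the third triangle of Proposition~\ref{glue-proposition}(1) applied to $\calP$ with $\Rad P_\alpha=P_a$ and $\Soc P_\alpha=P_z$ (no appeal to part (2) is needed), and which identifies $P_\alpha/P_z=\tau^{-1}P_a$, precisely the module you swap in for $P_a$.

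However, the ``main obstacle'' you flag rests on a false premise, and your proposed workaround is not a proof. You claim that $P_\alpha$ is generally not injective over $k\calP$ and that therefore $\Hom_{D^b(k\calP)}(P_a/P_z,P_\alpha[1])$ might be nonzero. In fact $P_\alpha=k_{[\alpha,\infty)}=k_\calP=k_{(-\infty,z]}=I_z$, because $\alpha$ is the unique minimal and $z$ the unique maximal element of $\calP$; so $P_\alpha$ is the injective envelope of the simple at $z$ and is both projective and injective, exactly as in Proposition~\ref{inductive-clamping-proposition} (the paper records $\Soc P_\alpha=P_z$ and uses $P_\alpha\cong I_z$ there). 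Consequently $\Hom_{D^b(k\calP)}(P_a/P_z,P_\alpha[1])=\Ext^1_{k\calP}(P_a/P_z,I_z)=0$, and the clean argument you thought was unavailable applies verbatim; your substitute reasoning (``$P_\alpha$ has projective dimension at most 1'', ``$\Rad P_\alpha$ is projective only after induction'') is both unnecessary and incorrect as stated, since $P_\alpha$ is projective and $\Rad P_\alpha=P_a=k_{[a,z]}$ is already a projective $k\calP$-module. Two smaller slips: $P_a/P_z=k_{[a,z)}$, not $k_{[a,z]}$ (the latter is $P_a$ itself), and the irreducible maps into $P_a/P_z$ supplied by Proposition~\ref{glue-proposition}(1) over $[a,z]$ come from $P_a$ and $\Rad P_a/\Soc P_a$, not from $\Rad P_a$. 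With the injectivity point corrected, your outline coincides with the paper's proof.
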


\begin{proof}
The proof is very similar to that of Proposition~\ref{inductive-clamping-proposition}. We start differently in that, by Proposition~\ref{glue-proposition}, we now have an Auslander-Reiten triangle in $D^b(k\calP)$ with terms
$$
P_a\to P_\alpha\oplus (P_a/P_z)\to P_{\alpha}/P_z\to P_a[1],
$$
noting that $P_a=k_{[a,z]}$, $P_z=\Soc P_a=\Soc P_\alpha$, $P_a/P_z=k_{[a,z)}$ and $P_\alpha/P_z=k_{[\alpha,z)}$. We wish to obtain a slice consisting of modules and containing $P_\alpha$, so that the only module in the slice with $z$ in its support is $P_\alpha$. Since $\calS$ contains $P_a$, which has $z$ in its support, we get rid of it and replace it by $\tau^{-1}P_a=P_{\alpha}/P_z$. It then suffices to adjoin one new module $P_\alpha$ to obtain a slice $\calS^+$. From the construction we see it satisfies Hypothesis~\ref{wing-hypothesis} and condition (2).

We must now establish (3) and (4), and we leave the details of this to the reader. The techniques used are exactly the same as in the proof of Proposition~\ref{inductive-clamping-proposition} with some different modules appearing that come from the slightly different situation.
\end{proof}

As an example of the situation described in Proposition~\ref{modified-clamping-proposition}, the particular case of Example~\ref{A4-example} shows explicitly how the tree class of the quiver when $\calP$ is a chain of length 4 is obtained from the tree class of the quiver of a chain of length 3 by adding a single new vertex (as well as in the manner described in Proposition~\ref{inductive-clamping-proposition}).

The construction of the class of posets $\calI\calC$ was done so as to facilitate the inductive argument of Proposition~\ref{inductive-clamping-proposition}. Evidently we can enlarge $\calI\calC$ by allowing the further operation of Proposition~\ref{modified-clamping-proposition}, giving a class of posets  $\calI\calC^+$ that are also piecewise hereditary. We now show how to construct lattices in $\calI\calC^+$ with certain specified tree classes. We will refer to the \textit{largest indecomposable projective} of a lattice $\calP$: this is the module $k_\calP$.

\begin{theorem}
\label{lattice-with-specified-tree-class}
Let $T$ be a finite tree with the property that whenever $x,y$ are vertices with valence $\ge 3$ then $d(x,y)\ge 2$. Then there is a lattice $\calP\in\calI\calC^+$
with the properties:
\begin{enumerate}
\item the component of the Auslander-Reiten quiver of $D^b(k\calP)$ containing the largest indecomposable projective $k\calP$-module has tree class $T$, 
\item the largest indecomposable projective $k\calP$-module may lie in the $\tau$-orbit corresponding to any specified extreme vertex of $T$, and
\item the wing of the quiver component of $D^b(k\calP)$ determined by the largest indecomposable projective $k\calP$-module consists entirely of modules.
\end{enumerate}
\end{theorem}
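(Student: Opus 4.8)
The plan is to prove, by induction on the number of vertices of $T$, the following strengthening: for every finite tree $T$ with the stated distance property and every extreme vertex $v_0$ of $T$, there is a lattice $\calP\in\calI\calC^+$ with a unique minimum and a unique maximum, such that the component of the Auslander-Reiten quiver of $D^b(k\calP)$ containing the largest indecomposable projective $k_\calP$ satisfies Hypothesis~\ref{wing-hypothesis} via a slice $\calS$ whose tree class is $T$ and which contains $k_\calP/\Soc k_\calP$, with $k_\calP$ occupying the $\tau$-orbit of $v_0$, and such that the wing of this component determined by $k_\calP$ consists entirely of modules. The extra data carried beyond the three asserted properties is exactly what is needed to feed Propositions~\ref{inductive-clamping-proposition} and~\ref{modified-clamping-proposition} into the induction. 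The base case $|V(T)|=1$ is the one-point poset. For the inductive step with $|V(T)|\ge 2$, write $v_1$ for the unique neighbour of the leaf $v_0$ and distinguish two cases according to the valence of $v_1$ in $T$.

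Suppose first that $v_1$ has valence at most $2$. Set $T'=T-v_0$. Deleting a leaf creates no new vertex of valence $\ge 3$ and leaves unchanged the distance between any two surviving vertices, so $T'$ still satisfies the distance property, and $v_1$, having lost a neighbour, is now an extreme vertex of $T'$. Apply the inductive hypothesis to $(T',v_1)$ to obtain a lattice $[a,z]\in\calI\calC^+$, with minimum $a$ and maximum $z$, whose largest projective $P_a=k_{[a,z]}$ occupies the $\tau$-orbit of $v_1$, and whose slice $\calS$ has tree class $T'$ and contains $P_a/\Soc P_a=P_a/P_z$. These are precisely the hypotheses of Proposition~\ref{modified-clamping-proposition}, which I would apply with a new minimum $\alpha<a$ to obtain $\calP=\{\alpha\}\cup[a,z]\in\calI\calC^+$, again a lattice since adjoining a new minimum to a bounded lattice yields a lattice. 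By that proposition the component of $D^b(k\calP)$ containing $k_\calP=P_\alpha$ has tree class $T^+$, obtained from $T'$ by gluing an extra edge at the vertex of $P_a$, i.e.\ at $v_1$; this tree is $T$, and $P_\alpha$ occupies the new leaf, which is $v_0$. The refined data (a slice $\calS^+$ of tree class $T$ containing $P_\alpha/\Soc P_\alpha$, and the wing consisting of modules) is propagated by the analogue, for Proposition~\ref{modified-clamping-proposition}, of the argument proving Corollary~\ref{tree-construction-corollary}(5).

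Suppose instead that $v_1$ has valence $n+1\ge 3$. By the distance property, every neighbour of $v_1$ other than $v_0$ has valence at most $2$; denote these neighbours $w_1,\dots,w_n$, so that $n\ge 2$. In $T-v_1$ the component of $v_0$ is the single vertex $\{v_0\}$, and the remaining components are subtrees $T_1,\dots,T_n$ with $w_j\in T_j$. Each $T_j$ inherits the distance property, $w_j$ is an extreme vertex of $T_j$ (it has lost its neighbour $v_1$), and $|V(T_j)|<|V(T)|$. By the inductive hypothesis there are lattices $[a_j,z_j]\in\calI\calC^+$, with minima $a_j$ and maxima $z_j$, whose largest projectives $P_{a_j}$ occupy the $\tau$-orbit of $w_j$ and which satisfy Hypothesis~\ref{wing-hypothesis} with a slice of tree class $T_j$. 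Let $\calP\in\calI\calC^+$ be the poset with unique minimum $\alpha$, unique maximum $\omega$, and $(\alpha,\omega)=[a_1,z_1]\sqcup\cdots\sqcup[a_n,z_n]$; it is a lattice. Then Proposition~\ref{inductive-clamping-proposition} applies directly: by its parts~(1) and~(2), $\calP$ satisfies Hypothesis~\ref{wing-hypothesis} and the component of $D^b(k\calP)$ containing $k_\calP=P_\alpha$ has tree class equal to a star of $n+1$ edges with one edge left free — its free end labelled $P_\alpha$, hence identified with $v_0$ — and with $T_j$ glued to the $j$-th remaining edge at the vertex of $P_{a_j}$, hence at $w_j$. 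That glued tree is precisely $T$, and the wing statement is again Corollary~\ref{tree-construction-corollary}(5).

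The heart of the argument, and the only place the distance property is used, is the decomposition in the second case: when the prescribed leaf $v_0$ abuts a branch vertex $v_1$, one is forced to strip the whole star at $v_1$ in a single use of Proposition~\ref{inductive-clamping-proposition}, and this is legitimate precisely because no neighbour of $v_1$ is itself a branch vertex, so the residual pieces $T_j$ detach as subtrees that still satisfy the distance property and still carry an extreme vertex $w_j$ on which the next largest projective can be placed. Were two branch vertices adjacent, neither this star-stripping step nor the single-edge step of Proposition~\ref{modified-clamping-proposition} could be applied, which is why the theorem needs the hypothesis. I expect the remaining work to be routine bookkeeping: one must keep verifying through the induction that the posets produced remain in $\calI\calC^+$, remain lattices, continue to satisfy Hypothesis~\ref{wing-hypothesis} with a slice realising the current tree and containing $k_\calP/\Soc k_\calP$, and have wing consisting entirely of modules — all of which is propagated by Propositions~\ref{inductive-clamping-proposition} and~\ref{modified-clamping-proposition} together with the mechanism behind Corollary~\ref{tree-construction-corollary}(5) (which invokes Corollary~\ref{AR-quiver-copying-corollary} and Lemma~\ref{split-long-exact-sequence-lemma}).
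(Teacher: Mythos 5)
Your argument is correct and uses the same toolkit as the paper (Propositions~\ref{inductive-clamping-proposition} and~\ref{modified-clamping-proposition}, with the distance hypothesis guaranteeing that the attachment vertices remain extreme in the residual subtrees), but it is organized by a genuinely different induction. The paper inducts on the number of branch vertices: it treats chains ($v=0$) directly and stars ($v=1$) via Example~\ref{star-example}, and in the inductive step it strips the star at the branch vertex nearest the distinguished leaf $p$ \emph{together with the entire path from that vertex to $p$}, clamps the lattices for the remaining subtrees between $\alpha'$ and $\omega$, and then re-attaches the path by repeated adjunction of minimal elements via Proposition~\ref{modified-clamping-proposition}. You instead induct on the number of vertices and peel locally at the distinguished leaf $v_0$: one application of Proposition~\ref{modified-clamping-proposition} when its neighbour is not a branch vertex, one application of Proposition~\ref{inductive-clamping-proposition} when it is. Your scheme buys uniformity (no separate star or chain base cases, no appeal to Example~\ref{star-example}) and, usefully, makes explicit in the induction hypothesis the condition $k_\calP/\Soc k_\calP\in\calS$ that Proposition~\ref{modified-clamping-proposition} needs and that the paper uses only implicitly through the form of the slices $\calS$ and $\calS^+$; the paper's scheme buys a shorter step count by consuming a whole arm at a time. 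One small wrinkle in your version: for the one-point poset the module $k_\calP/\Soc k_\calP$ is zero, so your strengthened statement fails at the base case and the hypothesis ``$P_a/P_z\in\calS$'' of Proposition~\ref{modified-clamping-proposition} is not literally available when $T'$ is a single vertex (i.e.\ $T=A_2$); this is harmless—take chains, or at least the one- and two-element chains, as base cases, exactly as the paper does—but it should be said. Your deferral of the wing statement to the mechanism behind Corollary~\ref{tree-construction-corollary}(5) matches the paper, which defers it in the same way.
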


\begin{proof}
We proceed by induction on the number $v$ of vertices of $T$ of valence $\ge 3$. When this number is 0, $T$ is the Dynkin diagram $A_n$ for some $n$. Letting $\calP$ be a chain with $n$ elements produces the desired result.

When $v=1$, $T$ is a star and we have seen in Example~\ref{star-example} lattices with the required properties.

Now let $v>1$ and suppose the result is proved for smaller values of $v$. Let $p$ be the extreme vertex of $T$ that is to correspond to the largest projective $k_\calP$ and let $x$ be the vertex of $T$ of valence $\ge 3$ closest to $p$. Let the edges of $T$ incident with $x$ be $e_0,e_1,\ldots,e_r,$ where $e_0$ lies on the path between $p$ and $x$, and label the vertices of $e_i$ by $x$ and $y_i$. Removing from $T$ the star consisting of the path from $x$ to $p$ and the edges $e_1,\ldots,e_r,$ yields a disjoint union of $r$ trees $T_1,\ldots, T_r$, and $y_i$ is an extreme vertex of $T_i$ for each $i$. By induction we can find a lattice $\calP_i$ satisfying the condition of the theorem with tree $T_i$ and largest projective $k_{\calP_i}$ corresponding to $y_i$. Clamping $\calP_1,\ldots ,\calP_r$ between elements $\alpha'$ and $\omega$ produces a lattice $\calP'$ so that the component of the Auslander-Reiten quiver of $D^b(k\calP')$ containing the largest projective has tree class the subtree $T'$ of $T$ that consists of $T$ minus the edges from $y_0$ to $p$, and with the largest projective associated to $y_0$. Finally, by adjoining a chain of elements in bijection with the vertices of $T-T'$ to the smallest element $\alpha'$ of $\calP'$ produces a poset $\calP$ with tree $T$ as required, by Propositions~\ref{inductive-clamping-proposition} and \ref{modified-clamping-proposition}.
This completes the inductive step. The final statement about the wing of the largest projective is proved in the same way as part (4) of Corollary~\ref{tree-construction-corollary}
\end{proof}

\section{The connection with the Auslander-Reiten quiver of $k\calP$-mod}

We explore the connection between Auslander-Reiten triangles in the bounded derived category and in the module category. It is tempting to suppose that if we take an Auslander-Reiten triangle in $D^b(\Lambda)$, where $\Lambda$ is a finite dimensional algebra, and evaluate its homology in some dimension, we will get either a split short exact sequence, or an Auslander-Reiten sequence direct sum a split sequence. This is not true in general: the Auslander-Reiten triangle ending in a projective module has zero homology that does not produce a short exact sequence. 

We start by providing a sufficient condition for the homology of an Auslander-Reiten triangle in some dimension to be an Auslander-Reiten sequence direct sum a split sequence, or split. This approach will provide us with a method to compute the Auslander-Reiten quiver of $k\calP$-mod, starting from the Auslander-Reiten quiver of $D^b(k\calP)$. Our first results hold in the generality of representations of a finite dimensional algebra and have wider interest than just representations of posets.

We denote the Auslander-Reiten translate by $\tau$, and for any object $M$ we refer to the \textit{$\tau$-orbit of $M$} as the set of objects of the form $\tau^i(M)$ for some $i\in\ZZ$. This is an abuse of terminology because $\tau$ may only be partially defined on a $\tau$-orbit, so that the operation of applying $\tau$ might not be invertible.

The first lemma follows from \cite[2.1]{Web2}; apart from this we have been unable to find references for it in the literature.

\begin{lemma}\label{split-long-exact-sequence-lemma}
Let $\Lambda$ be a finite dimensional algebra and let
$$
L\to M\to N\to L[1]
$$
be an Auslander-Reiten triangle in $D^b(\Lambda)$. Assume that $N$ is not the shift of a projective module (a complex with only one nonzero term). Then the associated long exact homology sequence is the splice of short exact sequences
$$0\to H_i(L)\to H_i(M)\to H_i(N)\to 0$$
with zero connecting homomorphisms.
\end{lemma}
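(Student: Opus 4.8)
The plan is to show that each connecting homomorphism $H_i(N)\to H_{i-1}(L)$ in the long exact sequence of the triangle $L\to M\to N\to L[1]$ vanishes. Since $L=\tau N=\nu N[-1]$, a connecting homomorphism is, up to the shift, an element of $\Hom_{D^b(\Lambda)}(N,\nu N)$; and since the triangle is an Auslander-Reiten triangle, the map $N\to L[1]$ lies in the socle of $\End_{D^b(\Lambda)}(N)$ acting on $\Hom_{D^b(\Lambda)}(N,\nu N)$. The key point I would exploit is the characterization in \cite[2.1]{Web2} (cited just before the lemma): the map $N\to L[1]=\nu N$ is, in a suitable sense, a \emph{minimal} or \emph{almost-zero} map, and in particular it factors through no split epimorphism onto $N$; I would use this to control what it does on homology.

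First I would set up the long exact homology sequence explicitly, writing the connecting map $\delta_i\colon H_i(N)\to H_{i-1}(L)$, and recall that $\delta_i$ is induced by the triangle morphism $w\colon N\to L[1]$ via $H_i(N)=H_{i}(N)\to H_{i}(L[1])=H_{i-1}(L)$. Second, I would translate the Auslander-Reiten property: the AR triangle ending at $N$ is built from a nonzero socle element $w$ of the $\End(N)$-module (or $\End(L)$-module) $\Hom(N,\nu N)$, equivalently $w$ is a \emph{radical} map that does not factor through any object not having $N$ as a summand along the relevant morphism. Third — and this is the crux — I would argue that if some $\delta_i\ne 0$, then $H_i(N)$ would contain a nonzero direct summand of $H_{i-1}(L)=H_{i-1}(\nu N)$ coming from a split piece, which combined with $N$ not being the shift of a projective forces a contradiction with minimality/the socle condition. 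The hypothesis that $N$ is not the shift of a projective is exactly what rules out the degenerate case (the AR triangle ending at a stalk projective has $L=0$ after shifting appropriately, or $M\to N$ is the radical inclusion with $w$ forced to be an isomorphism-like map, and the homology sequence degenerates), so I would isolate that case first and dispose of it, then handle the generic case.

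The main obstacle I anticipate is making precise the step "$\delta_i\ne 0$ contradicts the socle/minimality condition." The connecting map lives on homology, which is several functors removed from the triangle itself, so I need a clean way to pull information about $H_*(w)$ back to a statement about $w$ as a morphism in $D^b(\Lambda)$. The natural tool is to note that a nonzero $\delta_i$ would mean $w$ composed with the truncation $L[1]\to H_{i-1}(L)[i]$ (a canonical truncation map in the derived category) is nonzero; since truncation maps are not split monomorphisms in general, one must check that this forces $w$ itself to factor in a way incompatible with being an AR triangle's connecting morphism — or, alternatively, dualize and argue with the map $L\to M$ instead. I would lean on \cite[2.1]{Web2} to supply the precise statement that the connecting morphism of an AR triangle kills homology except in the degenerate projective-shift case, and present the argument above as the derivation of the lemma from that result, rather than reproving \cite[2.1]{Web2} from scratch.

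Once all $\delta_i=0$, the long exact sequence breaks at each stage into the short exact sequences $0\to H_i(L)\to H_i(M)\to H_i(N)\to 0$, which is the assertion of the lemma, so the proof concludes immediately after the vanishing of the connecting homomorphisms is established.
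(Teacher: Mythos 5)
There is a genuine gap at exactly the point you flag as the main obstacle. Your plan correctly reduces the lemma to showing that each connecting map $\delta_i\colon H_i(N)\to H_{i-1}(L)$, induced by the morphism $w\colon N\to L[1]$, vanishes, but you never supply the mechanism that transfers the almost-split property of $w$ to homology. The sketch you offer --- that $\delta_i\ne 0$ would force $H_i(N)$ to contain a direct summand of $H_{i-1}(L)$ ``coming from a split piece'' --- is not a valid inference: a nonzero connecting homomorphism in a long exact sequence produces no splitting, and the truncation-map variant runs into precisely the factorization problem you identify without resolving it. Falling back on \cite[2.1]{Web2}, whose statement you have not quoted or verified, leaves the argument incomplete; the paper mentions that reference only as a pointer and then gives its own self-contained proof. (A side correction: in the excluded case, the AR triangle ending at a shifted projective $P$ has first term $\nu P[-1]$, a shift of an injective module, not $L=0$.)

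The missing idea is representability of homology: $H_i(N)\cong\Hom_{D^b(\Lambda)}(\Lambda[i],N)$, so $\delta_i$ is simply postcomposition with $w$ applied to morphisms $\Lambda[i]\to N$. Since $N$ is indecomposable and, by hypothesis, not a shift of a projective module, $N$ is never a direct summand of $\Lambda[i]$, so no morphism $\Lambda[i]\to N$ is a split epimorphism; the defining lifting property of the Auslander-Reiten triangle then factors every such morphism through $M\to N$, and hence its composite with $w$ is zero. Therefore $\delta_i=0$ for all $i$, and the long exact sequence splices into the short exact sequences as claimed. This one-step Yoneda-style argument is the entire content of the paper's proof and is exactly the ``clean way to pull information about $H_*(w)$ back to $w$'' that your proposal is missing.
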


\begin{proof}
The homology $H_i$ is a representable functor:
$$
H_i(N)=\Hom_{D^b(\Lambda)}(\Lambda[i],N).
$$
The connecting homomorphisms in the long exact sequence of homology arising from the triangle are all zero, except when $N$ is the shift a projective module, because of the lifting property of the Auslander-Reiten triangle, since $N$ is never a summand of $\Lambda[i]$.
\end{proof}

A similar analysis shows that, when $N$ is the shift of a projective module, the long exact sequence in homology of the Auslander-Reiten triangle is not a splice of short exact sequences. We now describe circumstances in which we obtain short homology sequences that are Auslander-Reiten sequences, up to split summands.

\begin{proposition}
\label{AR-triangle-sequence-proposition}
Let $\Lambda$ be a finite dimensional algebra and let
$$L\to M\to N\to L[1]$$
be an Auslander-Reiten triangle in $D^b(\Lambda)$. Suppose that $N$ is a truncated projective resolution of some non-projective $\Lambda$-module. Thus $N$ has at most two non-zero homology groups, one in degree 0 and possibly one more in a higher degree at the other end of the complex, and is not the shift of a projective module. Then
$$
0\to H_0(L)\to H_0(M)\to H_0(N)\to 0
$$
is a short exact sequence of $\Lambda$-modules that is the direct sum of a split exact sequence and (possibly) an Auslander-Reiten sequence.
\end{proposition}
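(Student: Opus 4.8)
The plan is to feed the Auslander-Reiten triangle into Lemma~\ref{split-long-exact-sequence-lemma} and then identify the resulting degree-zero short exact sequence with an Auslander-Reiten sequence up to a split summand. Write the triangle as $L\to M\xrightarrow{g}N\to L[1]$ and set $X=H_0(N)$, the non-projective module of which $N$ is a truncated projective resolution; being such a resolution, $N$ may be taken to be a complex of projectives $0\to P_n\to\cdots\to P_0\to 0$ in degrees $0$ through $n$, with $\operatorname{coker}(P_1\to P_0)=X$. Since $N$ is the terminal term of an Auslander-Reiten triangle it is indecomposable, and since a truncated projective resolution of a decomposable module is itself decomposable, the indecomposability of $N$ forces $X$ to be indecomposable; let $0\to\tau X\to E\xrightarrow{q}X\to 0$ be the Auslander-Reiten sequence in $\Lambda$-mod ending at $X$. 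As $N$ is not the shift of a projective module, Lemma~\ref{split-long-exact-sequence-lemma} applies and gives the short exact sequence $0\to H_0(L)\to H_0(M)\xrightarrow{H_0(g)}X\to 0$ that we must analyse.

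The reduction I would make is that it suffices to show every $\Lambda$-module map $W\to X$ which is not a split epimorphism factors through $H_0(g)$. Granting this, $H_0(g)$ is either a split epimorphism, so that the sequence above splits and the statement holds with the Auslander-Reiten summand absent, or it is a right almost split morphism, so that the standard structure of right almost split maps (see \cite{ASS, ARS}) gives $H_0(M)\cong E\oplus Y$ with $H_0(g)$ corresponding to $(q,0)$, whence the sequence decomposes as the Auslander-Reiten sequence ending at $X$ direct sum the split sequence on $Y$. Moreover, by the defining property of the Auslander-Reiten sequence ending at $X$, an arbitrary non-split epimorphism $W\to X$ factors through $q$, so it is enough to factor the single morphism $q\colon E\to X$ through $H_0(g)$.

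The heart of the argument is to obtain that factorization inside $D^b(\Lambda)$. Lifting $q$ directly to a morphism $E\to N$ is obstructed by a possibly nonzero class in $\Ext^{\ge 2}$ precisely when $N$ has homology above degree $0$, so instead I would take a projective resolution of $E$, truncate it in the same degrees as $N$ to obtain a bounded complex of projectives $R=(0\to Q_n\to\cdots\to Q_0\to 0)$ with $H_0(R)=E$, and use the comparison theorem for projective resolutions to lift $q$ to a genuine chain map $\phi\colon R\to N$ with $H_0(\phi)=q$. The key observation is that $\phi$ is not a split epimorphism in $D^b(\Lambda)$: a section of $\phi$ would induce on $H_0$ a section of $q$, contradicting the non-splitness of the Auslander-Reiten sequence. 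Therefore, by the defining property of the Auslander-Reiten triangle $L\to M\xrightarrow{g}N\to L[1]$, the map $\phi$ factors as $g\rho$ for some $\rho\colon R\to M$, and applying $H_0$ yields $q=H_0(g)\circ H_0(\rho)$, which is the factorization we wanted.

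I expect the main obstacle to be exactly this last step: realizing $q$ as $H_0$ of a morphism into $N$ without being able to lift $q$ itself, the matching truncation $R$ of a resolution of $E$ being the device that sidesteps the $\Ext$-obstruction. The remaining ingredients --- that $X$ is indecomposable so ``the'' Auslander-Reiten sequence ending at it makes sense, and the bookkeeping translating ``$H_0(g)$ is right almost split or a split epimorphism'' into the asserted direct-sum decomposition of short exact sequences --- are routine once the factorization is in hand.
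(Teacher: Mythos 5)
Your proof is correct and follows essentially the same route as the paper's: exactness via Lemma~\ref{split-long-exact-sequence-lemma}, lifting a non-split-epi map to $H_0(N)$ through a truncated projective resolution occupying the same degrees as $N$, invoking the Auslander-Reiten triangle property, and concluding with the standard structure of right almost split maps. The only (harmless) difference is that you reduce to factoring the single map $q\colon E\to H_0(N)$ from the Auslander-Reiten sequence, whereas the paper lifts an arbitrary map $U\to H_0(N)$ that is not a split epimorphism directly.
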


\begin{proof} 
Exactness is proved in Lemma~\ref{split-long-exact-sequence-lemma}. 
Note that $N$ is indecomposable, by the Auslander-Reiten triangle hypothesis. It follows that $H_0(N)$ is indecomposable, since $N$ is a truncated projective resolution of $H_0(N)$, and this would decompose if $H_0(N)$ did. We show that if $f:U\to H_0(N)$ is a module homomorphism that is not a split epimorphism then $f$ lifts to a homomorphism $U\to H_0(M)$. 
Let $P_U$ be the start of a projective resolution of $U$ with the same number of non-zero terms as the complex $N$. Then $f$ lifts to a homomorphism of complexes $\phi:P_U\to N$, so that $H_0(\phi)=f$, and $\phi$ is not a split epimorphism because $f$ is not a split epimorphism. Thus $\phi$ lifts to a homomorphism of complexes $P_U\to M$ by the Auslander-Reiten property. 
Taking zero homology we get that $f$ lifts through $H_0(M)$. From this it follows that $0\to H_0(L)\to H_0(M)\to H_0(N)\to 0$ is a direct sum of a split short exact sequence and (possibly) an Auslander-Reiten sequence of modules.
The argument is standard: either $H_0(M)\to H_0(N)$ is split epi, or else it is right almost split, in which case it breaks up as a direct sum of a minimal right almost split map and a zero map (see~\cite{ARS}). In the first case the sequence is split and in the second case it is the direct sum of a split sequence and Auslander-Reiten sequence.
\end{proof}

\begin{corollary}
\label{triangle-to-sequence-copying}
Let $\Lambda$ be a finite dimensional algebra.
\begin{enumerate}
\item Suppose that $L\to M\to N\to L[1]$ is an Auslander-Reiten triangle in $D^b(\Lambda)$ where $L$, $M$ and $N$ are all isomorphic to modules (i.e. their only non-zero homology is in degree 0). Then the sequence of $\Lambda$-modules $0\to H_0(L)\to H_0(M)\to H_0(N)\to 0$ is an Auslander-Reiten sequence.
\item If part of the Auslander-Reiten quiver of $D^b(\Lambda)$ consists of modules, forming a union of meshes, then this part is copied into the Auslander-Reiten quiver of $\Lambda$-mod.
\end{enumerate}
\end{corollary}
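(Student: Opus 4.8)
The plan is to prove part (1) by the argument used for Proposition~\ref{AR-triangle-sequence-proposition}, which simplifies when $N$ is itself a module, together with the observation that the first term of an Auslander--Reiten triangle is indecomposable; part (2) will then follow by applying part (1) to every mesh of the region. To begin, since $L$, $M$, $N$ all have nonzero homology only in degree $0$, the module $N=H_0(N)$ cannot be projective, for otherwise $L\cong\tau N=\nu N[-1]$ would be the shift of an indecomposable injective module into a nonzero degree, contradicting the hypothesis on $L$ --- this is the one place the assumption that $L$ is a module is used. As the last term of an Auslander--Reiten triangle, $N$ is indecomposable in $D^b(\Lambda)$, hence indecomposable as a module, and likewise $L=H_0(L)$ is indecomposable. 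By Lemma~\ref{split-long-exact-sequence-lemma}, which applies because a non-projective module is not the shift of a projective, the sequence $0\to H_0(L)\to H_0(M)\to H_0(N)\to 0$ is short exact.

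Next I would show $H_0(M)\to H_0(N)$ is right almost split. It is not a split epimorphism, since the middle map of an Auslander--Reiten triangle never is. Given a module map $f\colon U\to N$ that is not a split epimorphism, view it as a morphism in $D^b(\Lambda)$ between complexes concentrated in degree $0$: since $\Hom_{D^b(\Lambda)}(X,Y)=\Hom_\Lambda(X,Y)$ for such complexes, $f$ is still not a split epimorphism in $D^b(\Lambda)$, so by the defining property of the triangle $L\to M\to N\to L[1]$ it factors through $M\to N$ in $D^b(\Lambda)$, and the same identification turns this into a factorization of module maps. Hence $H_0(M)\to H_0(N)$ is right almost split onto an indecomposable non-projective module, so by the standard argument (see \cite{ARS}) the short exact sequence of the previous paragraph is the direct sum of the Auslander--Reiten sequence ending at $N$ and a split sequence whose kernel term is a summand of $H_0(L)$. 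Since $H_0(L)=L$ is indecomposable while the Auslander--Reiten sequence already contributes the nonzero kernel $\tau N$, that split summand is zero, so the sequence is exactly an Auslander--Reiten sequence; this proves (1).

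For part (2), a mesh of the region is an Auslander--Reiten triangle $L\to\bigoplus_i M_i\to N\to L[1]$ whose terms are all modules, and part (1) converts it into the Auslander--Reiten sequence $0\to H_0(L)\to\bigoplus_i H_0(M_i)\to H_0(N)\to 0$ in $\Lambda$-mod, with the same indecomposable vertices and with the components $M_i\to N$ becoming the irreducible maps of the sequence. Using again that $\Hom$-groups between degree-$0$ modules agree in $D^b(\Lambda)$ and in $\Lambda$-mod, irreducible morphisms among the vertices of the region coincide in the two categories, so these meshes glue together to reproduce the region as a translation subquiver of the Auslander--Reiten quiver of $\Lambda$-mod.

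The main work has already been done in Lemma~\ref{split-long-exact-sequence-lemma} and Proposition~\ref{AR-triangle-sequence-proposition}, so I expect no serious obstacle. The only points needing care are keeping track that the hypothesis is precisely what rules out the degenerate case of Lemma~\ref{split-long-exact-sequence-lemma} (namely $N$ a shift of a projective), and, in part (2), being explicit about what ``copied'' should mean --- the only way an unwanted arrow could appear among vertices of the region is through a projective or injective $\Lambda$-module lying in it, and even then the internal mesh structure, which is all the statement asserts, is unaffected.
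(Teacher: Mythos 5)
Your proof is correct and follows essentially the same route as the paper: rule out $N$ being (a shift of) a projective using the hypothesis on $L$, obtain the short exact sequence from Lemma~\ref{split-long-exact-sequence-lemma}, and use indecomposability of $L$ and $N$ to eliminate the split summand. The only difference is that where the paper simply invokes Proposition~\ref{AR-triangle-sequence-proposition}, you re-derive the right-almost-split property directly by lifting module maps through the triangle via $\Hom_{D^b(\Lambda)}(U,N)\cong\Hom_\Lambda(U,N)$, which is just the specialization of that proposition's argument to the case where $N$ is a module.
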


\begin{proof} (1) Note that $N$ cannot be a projective module (or the shift of a projective module) because if it were, then $L \cong \nu N[-1]$ would fail to be a module, contrary to the assumption on $L$.
Thus, by Proposition~\ref{AR-triangle-sequence-proposition}, $0\to H_0(L)\to H_0(M)\to H_0(N)\to 0$ is a short exact sequence of modules that is a direct sum of a split sequence and possibly an Auslander-Reiten sequence. Since $L$ and $N$ are indecomposable, so are the modules $H_0(L)$ and $H_0(N)$, because these are the only non-zero homology modules. For the same reason, the sequence  $0\to H_0(L)\to H_0(M)\to H_0(N)\to 0$ is not split. We deduce that the sequence is an Auslander-Reiten sequence.

(2) is an immediate consequence.
\end{proof}

\begin{corollary}
\label{wing-copy-corollary}
Let $\calP\in\calI\calC$ be a poset obtained by iterative clamping.
The wing of the quiver component of $D^b(k\calP)$ containing the largest projective $P_\alpha$ consists entirely of modules, and is a union of meshes.
A copy of this wing appears in the Auslander-Reiten quiver of $k\calP\hbox{-mod}$.
\end{corollary}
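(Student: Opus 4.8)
The statement has three clauses. That the wing $W$ of the quiver component of $D^b(k\calP)$ determined by $P_\alpha$ consists entirely of modules is exactly part (5) of Corollary~\ref{tree-construction-corollary}, so nothing new is needed there. That a copy of $W$ then appears in the Auslander-Reiten quiver of $k\calP$-mod will follow formally from Corollary~\ref{triangle-to-sequence-copying}(2) as soon as $W$ is known to be a union of (complete) meshes. So the plan is: (a) quote Corollary~\ref{tree-construction-corollary}(5); (b) show $W$ is a union of complete meshes; (c) invoke Corollary~\ref{triangle-to-sequence-copying}. The ingredient for (b) is Corollary~\ref{tree-construction-corollary}(3), which tells us the ambient quiver component has the form $\ZZ T$, with $P_\alpha$ occupying the free end of the ``extra edge'' and hence a leaf of $T$; the wing of a leaf is, after a coordinatization of $\ZZ T$, the ``cone'' of vertices joined to $P_\alpha$ by a sectional path, the region carved out of $\ZZ T$ by the two boundary sectional paths issuing from $P_\alpha$.

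For (b) I would verify that every arrow $M\to N$ of $W$ lies in a complete mesh that is contained in $W$. Given such an arrow, either $\tau N\in W$ --- in which case the middle terms of the mesh ending at $N$, lying ``between'' $\tau N$ and $N$, also lie in $W$ --- or $\tau^{-1}M\in W$, and then the mesh starting at $M$ lies in $W$ for the same reason; one of the two always happens because $W$ is precisely the region between its two boundary sectional paths. It follows that $W$ is the union of the complete meshes ending at those vertices $N\in W$ with $\tau N\in W$: every vertex of $W$ occurs in at least one of them, a boundary vertex as a source or a sink term --- with the single exception of the apex $P_\alpha$ itself, which occurs only as a middle term and must be dealt with by hand.

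The step I expect to take the most care is precisely this behaviour at $P_\alpha$, where the ``obvious'' meshes leave the module region. Because $P_\alpha=I_\omega$ is both projective and injective, Proposition~\ref{glue-proposition}(1) exhibits the mesh ending at $P_\alpha$ as $I_\alpha[-1]\to\Rad P_\alpha\to P_\alpha\to I_\alpha$ and the mesh starting at $P_\alpha$ as $P_\alpha\to P_\alpha/\Soc P_\alpha\to\Soc P_\alpha[1]\to P_\alpha[1]$, whose outer terms $I_\alpha[-1]$ and $\Soc P_\alpha[1]$ are shifts of modules and so are not in $W$; thus $P_\alpha$ lies on both boundary paths. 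One must therefore check that the two arrows $\Rad P_\alpha\to P_\alpha$ and $P_\alpha\to P_\alpha/\Soc P_\alpha$ incident with $P_\alpha$ are nevertheless covered, namely by the single mesh ending at $P_\alpha/\Soc P_\alpha$ --- equivalently starting at $\Rad P_\alpha$ --- which by the third triangle of Proposition~\ref{glue-proposition}(1) is
$$
\Rad P_\alpha\to P_\alpha\oplus(\Rad P_\alpha/\Soc P_\alpha)\to P_\alpha/\Soc P_\alpha\to\Rad P_\alpha[1],
$$
all of whose terms are modules lying in $W$. Pinning down the shape of the wing near its apex, and confirming that the meshes listed in step (b) really do exhaust $W$, is the delicate part of the argument.

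Finally, for (c): each complete mesh of $W$ is now an Auslander-Reiten triangle $L\to M\to N\to L[1]$ in $D^b(k\calP)$ whose three terms are all modules, by Corollary~\ref{tree-construction-corollary}(5); hence by Corollary~\ref{triangle-to-sequence-copying}(1) its degree-$0$ homology $0\to H_0(L)\to H_0(M)\to H_0(N)\to 0$ is an Auslander-Reiten sequence in $k\calP$-mod on the same vertices, and the passage $X\mapsto H_0(X)$ is faithful on $W$ since every object of $W$ is concentrated in degree $0$. Assembling these over all the meshes of $W$ is exactly the content of Corollary~\ref{triangle-to-sequence-copying}(2), and produces the claimed copy of $W$ inside the Auslander-Reiten quiver of $k\calP$-mod.
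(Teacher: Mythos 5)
Your argument is correct and is essentially the paper's own proof: the paper likewise quotes Corollary~\ref{tree-construction-corollary} for the wing consisting of modules, asserts that the wing is a union of meshes as following from the definition of the wing (leaving to the reader the mesh-by-mesh analysis you spell out, including the behaviour at the apex $P_\alpha$ via Proposition~\ref{glue-proposition}), and concludes by applying Corollary~\ref{triangle-to-sequence-copying}. The only point the paper adds is the degenerate case where $\calP$ is a single point, so that the wing is a single vertex and not a union of meshes; there $k\calP=k$ and the statement is trivially true, a case your appeal to Proposition~\ref{glue-proposition}(1) (which requires $\alpha\ne\omega$) quietly omits.
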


\begin{proof}
We have seen in Corollary~\ref{tree-construction-corollary} that the wing of the quiver of $D^b(k\calP)$ determined by the largest projective consists of modules, and from the definition of the wing it is a union of meshes, except in the case when $\calP$ is a single point. In that case $k\calP=k$ and the result is true. It follows from Corollary~\ref{triangle-to-sequence-copying} the wing is copied into the Auslander-Reiten quiver of $k\calP\hbox{-mod}$.
\end{proof}

We now show how to use the information from Corollary~\ref{wing-copy-corollary} to help compute the Auslander-Reiten quiver of $k\calP\hbox{-mod}$ for a poset $\calP$ obtained by iterative clamping. Note that something more general can also be done for an extended class of posets using Proposition~\ref{modified-clamping-proposition}, but we leave this to the reader. We have seen how to obtain a part of a quiver component of $k\calP\hbox{-mod}$, but this component will not be stable, because it contains the projective module $P_\alpha$. To describe the whole component we consider the stable part as well as $P_\alpha$ and the $\tau$-orbits of the finitely many other projectives and injectives that may happen to be there.

By slight extension of a notation used previously, if $\theta$ is an element of $\calP$ we denote by $k_\theta$ the simple $k\calP$-module supported at $\theta$, with dimension 1 at $\theta$ and 0 at other elements of $\calP$. We will make use of additive functions on the Auslander-Reiten quiver of $k\calP$-mod. Writing the largest element of $\calP$ as $\omega$, as before, let $f_\omega$ be the function defined on $k\calP$-modules given by $f_\omega(M):= \dim_k\Hom(P_\omega, M)$, the composition factor multiplicity of $k_\omega$ in $M$. Since composition factor multiplicities are additive on short exact sequences, this function is additive on every mesh in the Auslander-Reiten quiver of $k\calP$-mod. Given a translation quiver with Auslander-Reiten translate $\tau$, we term a \textit{$\tau$-orbit interval} a set of vertices of the form $\tau^i M$ with $r\le i\le s$ for some integers $r$ and $s$.

\begin{proposition}
\label{AR-quiver-subset-of-ZT}
Let $\calP\in\calI \calC$ be a poset obtained by iterated clamping. Thus $\calP$ has a smallest element $\alpha$, a largest element $\omega$, and satisfies the hypotheses of Proposition~\ref{inductive-clamping-proposition}, with a slice $\calS$ and underlying tree $T$ as specified there. Then the component of the Auslander-Reiten quiver of $k\calP$-mod that contains $P_\alpha$ is a subset of $\ZZ T$. Identifying the modules in $\calS$ with a copy of $T$ in $\ZZ T$, the subset  of $\ZZ T$ that is the Auslander-Reiten quiver of $k\calP$-mod is a union of $\tau$-orbit intervals, each interval containing an element of $\calS$ and either infinite or terminating in a  projective module to the left, or an injective module to the right. 
\end{proposition}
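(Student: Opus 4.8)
The plan is to construct the component $\calC$ of the Auslander--Reiten quiver of $k\calP\text{-mod}$ containing $P_\alpha$ by induction on $n$, where $\calP\in\calI\calC_n$, transporting information from $D^b(k\calP)$ by means of the triangle-to-sequence dictionary of Section~5. As the starting data (and the base of the induction) I would invoke Corollary~\ref{tree-construction-corollary}: the component of $D^b(k\calP)$ containing $P_\alpha$ is $\ZZ T$; its wing $W$ at $P_\alpha$ consists entirely of modules and is a union of meshes; the slice $\calS\subseteq W$ has underlying tree $T$; and in $W$ projectives occur only on its left boundary and injectives only on its right boundary. By Corollary~\ref{wing-copy-corollary} a copy of $W$, containing $P_\alpha$ and $\calS$, already sits inside $\calC$, and $\tau_{\mathrm{mod}}$ agrees with the derived translate there; since $\calS$ is then a slice of $\calC$, the component $\calC$ embeds as a translation quiver into $\ZZ T$, which is the first assertion and also pins down the tree class.

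For the inductive step, write $(\alpha,\omega)=[a_1,z_1]\sqcup\cdots\sqcup[a_n,z_n]$ with $[a_i,z_i]\in\calI\calC_{n-1}$, and let $\calC_i$ be the component of $k[a_i,z_i]\text{-mod}$ containing $P_{a_i}$; by the inductive hypothesis $\calC_i$ is a union of $\tau$-orbit intervals of $\ZZ T_i$, each meeting $\calS_i$ and each infinite or ending in a projective to the left and an injective to the right. Since $[a_i,z_i]$ is clamped in $\calP$, Corollary~\ref{AR-quiver-copying-corollary} identifies the part of the Auslander--Reiten quiver of $D^b(k\calP)$ built from complexes with homology supported on $[a_i,z_i]$ with the corresponding part for $D^b(k[a_i,z_i])$; on the sub-region that is $\calC_i$ all complexes are genuine modules forming meshes, so Corollary~\ref{triangle-to-sequence-copying} copies that sub-region, as a translation quiver, into $\calC$. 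The endpoint projectives $P_x^{[a_i,z_i]}$ go to the projective $k\calP$-modules $P_x^\calP$ and the endpoint injectives to injective $k\calP$-modules (induction sends projectives to projectives and coinduction sends injectives to injectives, by Proposition~\ref{clamping-adjoint-properties}), so these intervals keep their endpoints and their projective/injective character inside $\calC$. To assemble the pieces, and produce the $\tau$-orbits not coming from any $T_i$ --- those of $P_\alpha$, of $P_\alpha/\Soc P_\alpha$, and those through the glued modules $k_{[a_i,z_i]}$ --- I would use the Auslander--Reiten triangles of Proposition~\ref{glue-proposition}, in particular
$$
\Rad P_\alpha\to P_\alpha\oplus k_{[a_1,z_1]}\oplus\cdots\oplus k_{[a_n,z_n]}\to P_\alpha/P_\omega\to\Rad P_\alpha[1],
$$
whose terms are all modules: Corollary~\ref{triangle-to-sequence-copying} turns these into Auslander--Reiten sequences in $k\calP\text{-mod}$, gluing the copied pieces exactly as Proposition~\ref{inductive-clamping-proposition} glues them in $D^b(k\calP)$, with $P_\alpha$ at the left end of its orbit interval (it is projective, so carries no $\tau_{\mathrm{mod}}$) and $P_\alpha/\Soc P_\alpha$ on the adjacent orbit.

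It then remains to see that $\calC$ contains nothing more. For this I would use the additive function $f_\omega(M)=\dim_k\Hom(P_\omega,M)=\dim_k M(\omega)$, which is nonnegative and additive on every mesh of $\calC$, vanishes on $\calS\setminus\{P_\alpha\}$, and equals $1$ at $P_\alpha$; one checks that its additive extension from the slice $\calS$ to $\ZZ T$ is nonnegative exactly on the union of $\tau$-orbit intervals produced above, so $\calC$ can reach no further, and combined with the construction this yields the stated equality together with the description of the interval ends.

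The step I expect to be the main obstacle is the passage beyond the wing. There the module-category translate $\tau_{\mathrm{mod}}$ need not agree with the derived translate $\tau_D=\nu[-1]$ --- for a module $M$, $\tau_D M$ is typically a genuine complex, often a shift of a projective, as one already sees in small examples --- so the shape of $\calC$ cannot simply be read off that of $\ZZ T$ by restriction. Thus the real work is to show that the inductive copying plus the few glued orbits exhausts $\calC$, which forces one to verify at each stage that the supports of the copied modules satisfy the hypotheses of Corollaries~\ref{AR-quiver-copying-corollary} and~\ref{triangle-to-sequence-copying}; it is the positivity of $f_\omega$, rather than any \emph{ad hoc} count, that is meant to deliver the exhaustion cleanly. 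By contrast, the identification of the tree class $T$ is essentially immediate from Proposition~\ref{inductive-clamping-proposition}.
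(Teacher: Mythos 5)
Your first paragraph already contains the central gap: the inference ``$\calS$ is a slice of $\calC$, hence $\calC$ embeds as a translation quiver into $\ZZ T$'' is precisely what has to be proved, and it does not follow from the existence of a slice alone. The danger is that new projective vertices could attach to the right of $\calS$ (via the irreducible map $\Rad P_x\to P_x$), or new injectives to the left, inserting vertices lying on no $\tau$-orbit through $\calS$ and destroying the $\ZZ T$ shape. The paper's proof is exactly about excluding this, and it is direct rather than inductive: $f_\omega$ vanishes on $\calS\setminus\{P_\alpha\}$, is additive on meshes, takes the value $1$ on each projective $P_x$ and on $\Rad P_x$, and since $P_\alpha$ is injective its orbit does not continue to the right; propagating mesh by mesh from $\calS$ one gets $f_\omega\equiv 0$ to the right of $\calS$, so no projective can appear there, and the dual statement on the left follows because $\calP^{\rm op}\in\calI\calC$. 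In your write-up $f_\omega$ enters only at the end and for a different purpose (bounding the extent of the orbits), so this exclusion step — the actual content of the first assertion — is missing.

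The inductive step is also unsound. The module-category components $\calC_i$ of the clamped intervals contain the projectives and injectives of $k[a_i,z_i]$, whose supports contain $z_i$ (resp.\ $a_i$); these violate the support hypotheses needed for Corollaries~\ref{AR-quiver-copying-corollary} and~\ref{triangle-to-sequence-copying}, their meshes are not preserved, and the endpoints of the copied $\tau$-orbit intervals do not keep their projective/injective character over $k\calP$: already in Example~\ref{A4-example}, $P_3^{[2,3]}=k_3$ is not projective over $k\calP$ and its $\tau$-orbit continues past it, and in Example~\ref{AR-quiver-example} the component of $P_\alpha$ contains the projectives $P_\omega,P_\theta,P_\eta,\dots$, which lie in none of your copied pieces nor in the glued orbits of $P_\alpha$, $P_\alpha/\Soc P_\alpha$, $k_{[a_i,z_i]}$. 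Hence the union you construct does not exhaust $\calC$, and the concluding claim that $f_\omega$ is ``nonnegative exactly on'' that union is false. Note also that the proposition does not assert such an exact determination of the subset of $\ZZ T$: it only asserts that the component is a union of $\tau$-orbit intervals through $\calS$, each infinite or terminating in a projective on the left or an injective on the right. The correct starting point is the one you name — Corollary~\ref{wing-copy-corollary} places $\calS$ inside $\calC$ — but what must follow is the $f_\omega$/duality argument above, not a copy-and-glue exhaustion.
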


\begin{proof}
The function $f_\omega$ is 0 on $\calS$ except on $P_\alpha$, where it takes the value 1. This is because of the hypothesis that the only module in $\calS$ with support on $\omega$ is $P_\alpha$. There is no mesh starting at $P_\alpha$, because it is injective. If a new projective module $P_x$ appears in this quiver component to the right of $\calS$, then it must arise via the only irreducible morphism terminating in $P_x$, namely $\Rad P_x\to P_x$. Notice that $f_\omega(\Rad P_x)=f_\omega(P_x)=1$, so that $f_\omega$ must already have taken the value 1 to the right of $S$ before the projective $P_x$ appears. If $\calS$ has at least three elements (the case of one or two is left to the reader), there is a mesh starting at an element of $\calS$ with its middle terms in $\calS$, perhaps apart from a projective. There can, in fact, be no projective, because $f_\omega$ is zero on the left of the mesh. Thus $f_\omega$ is zero at the right term of this mesh. Repeating in this way we deduce that $f_\omega$ is zero everywhere to the right of $\calS$ and that no new projective modules $P_x$ appear there. From this we see that the quiver to the right of $\calS$ consists entirely of negative powers of $\tau$ applied the non-injective terms in $\calS$, with a finite $\tau$-orbit interval to the right if it is terminated by an injective module. Because the hypothesis that $\calP\in \calI\calC$ implies that the opposite poset $\calP^{\rm op}\in \calI \calC$, we see dually that no injective can appear to the left of $\calS$ and that every module in this quiver component has the form $\tau^n M$ for some $n\in\ZZ$ and $M\in\calS$ in an interval that may be terminated on the left by a projective, and on the right by an injective.
\end{proof}

The calculation of Auslander-Reiten quivers of $k\calP$-mod in our examples is facilitated by the following proposition. 

\begin{proposition}
\label{tau-calculations}
\begin{enumerate}
\item Suppose the poset $\calP$ consists of two clamped intervals $[\beta,\delta]$ and $[\gamma,\epsilon]$ lying between an initial element $\alpha$ and a terminal element $\omega$. Then the Auslander-Reiten translate on $k\calP$-mod has values $\tau k_\epsilon=P_\beta$ and $\tau k_\delta=P_\gamma$.
\item Whenever a chain $a<b<c<\cdots<z$ is clamped we have $\tau k_a=k_b$, $\tau k_b=k_c$, and so on.
\end{enumerate}
\end{proposition}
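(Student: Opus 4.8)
The plan is to compute each of these translates directly from the classical formula $\tau=D\operatorname{Tr}$, exploiting that in every case the module involved is a simple module $k_\theta$ whose projective cover has projective radical, so $k_\theta$ has a two-term minimal projective presentation. I would first record a general observation: \emph{if $\theta\in\calP$ has a unique cover $\psi$, then $\Rad P_\theta=P_\psi$, the minimal projective presentation of $k_\theta$ is $P_\psi\hookrightarrow P_\theta\to k_\theta\to 0$, and}
$$\tau k_\theta\ =\ \ker\bigl(\nu(P_\psi\hookrightarrow P_\theta)\bigr)\ =\ \ker\bigl(I_\psi\to I_\theta\bigr)\ =\ k_{\{\,x\in\calP\,:\,x\le\psi,\ x\not\le\theta\,\}},$$
where $\nu$ is the Nakayama functor ($\nu P_x=I_x$). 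Here $\Rad P_\theta=P_\psi$ because in a finite poset every element strictly above $\theta$ lies above some cover of $\theta$, so $(\theta,\infty)_\calP=[\psi,\infty)_\calP$ once the cover is unique; and $\nu$ carries the inclusion to the ``obvious'' map $I_\psi\to I_\theta$ --- the identity on $(-\infty,\theta]_\calP$ and zero elsewhere, with kernel the displayed constant functor (whose support $\{x\le\psi,\,x\not\le\theta\}$ is automatically convex) --- because $\Hom(I_\psi,I_\theta)\cong\Hom(P_\psi,P_\theta)=P_\theta(\psi)$ is one-dimensional (by Yoneda, as $\psi\ge\theta$, and since $\nu$ is an equivalence from projectives to injectives). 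Granting this, each part reduces to locating the unique cover and reading off the set $\{x\le\psi,\,x\not\le\theta\}$ from the order relation.

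For part~(2) I would take $x$ to be an element of the clamped chain $a<b<c<\cdots<z$ other than $z$, with successor $x'$. Since the \emph{closed} interval $[a,z]$ is clamped and coincides with the chain, any cover $y$ of $x$ satisfies $a\le x<y$, so the clamping condition gives $y\le z$ or $z\le y$, and a short case check forces $y=x'$; thus $x'$ is the unique cover of $x$. (Alternatively, Proposition~\ref{clamping-adjoint-properties}(1) already says the minimal projective resolution of $k_x$ over $k\calP$ is induced from that over the chain algebra $k[a,z]$, which is hereditary, hence has two terms.) Then $\tau k_x=k_{\{x''\,:\,x''\le x',\ x''\not\le x\}}$, and I would finish by the clamping condition once more: if $x''\le x'$ then $x''\le x'\le z$, so $x''$ is comparable with $a$; if $x''\le a$ then $x''\le a\le x$, and if $a\le x''$ then $x''\in[a,x']_\calP=\{a,\dots,x,x'\}$, so $x''=x'$ or $x''\le x$. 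Hence the set is $\{x'\}$ and $\tau k_x=k_{x'}$; applying this to each $x\in\{a,b,c,\dots\}\setminus\{z\}$ in turn gives the full statement.

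For part~(1) I would write $\calP=\{\alpha\}\cup[\beta,\delta]\cup[\gamma,\epsilon]\cup\{\omega\}$ with $(\alpha,\omega)=[\beta,\delta]\sqcup[\gamma,\epsilon]$, and apply the observation to $\theta=\epsilon$. From the shape of $\calP$, the only element strictly above $\epsilon$ is $\omega$, so $\omega$ is the unique cover of $\epsilon$ and $\Rad P_\epsilon=P_\omega=k_\omega$ (projective, since $\omega$ is maximal). Therefore $\tau k_\epsilon=\ker(I_\omega\to I_\epsilon)$, and since $I_\omega=k_\calP$ this kernel is $k_{\{x\,:\,x\not\le\epsilon\}}$. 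The elements $\le\epsilon$ are exactly those of $\{\alpha\}\cup[\gamma,\epsilon]$, so the complementary set is $[\beta,\delta]\cup\{\omega\}=[\beta,\infty)_\calP$, giving $\tau k_\epsilon=k_{[\beta,\infty)_\calP}=P_\beta$. Interchanging the two intervals gives $\tau k_\delta=P_\gamma$.

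The step that will require genuine care is the identification of $\nu$ applied to $\Rad P_\theta\hookrightarrow P_\theta$ with the canonical map $I_\psi\to I_\theta$ and of its kernel with the stated constant functor: this rests on the one-dimensionality of $\Hom(P_\psi,P_\theta)$ and on keeping track of which support sets are convex so that the notation $k_\calQ$ is legitimate. Everything else is bookkeeping with the partial order together with a single application of the clamping hypothesis in part~(2). (There is also a more structural route to part~(2): since $k_x$ has homology supported in $[a,z)$, Corollary~\ref{AR-quiver-copying-corollary} copies the Auslander--Reiten triangle ending at $k_x$ from $D^b(k[a,z])$, where --- the chain algebra being hereditary --- it is the triangle of modules $k_{x'}\to k_{[x,x']}\to k_x\to k_{x'}[1]$, so that, all three terms being modules, Corollary~\ref{triangle-to-sequence-copying} converts it into the Auslander--Reiten sequence $0\to k_{x'}\to k_{[x,x']}\to k_x\to 0$. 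This does not reach part~(1), since the triangle ending at $k_\epsilon$ is not in the copied region.)
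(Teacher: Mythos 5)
Your proof is correct and is essentially the paper's own argument: in each case you take the two-term minimal projective presentation of the simple module (the radical of its projective cover being projective because the cover of the vertex is unique), apply the Nakayama functor, and identify the kernel of $I_\psi\to I_\theta$ as the stated constant functor ($P_\beta$, resp.\ $k_{x'}$). The extra care you give to the one-dimensionality of $\Hom(I_\psi,I_\theta)$ and the convexity of the support, as well as the alternative route to part (2) via Corollaries~\ref{AR-quiver-copying-corollary} and \ref{triangle-to-sequence-copying}, are sound but not different in substance from the paper's computation.
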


\begin{proof}
(1) We calculate the effect of $\tau$ on $k_\epsilon$ by taking the first two terms of a projective resolution $P_\omega\to P_\epsilon\to k_\epsilon\to 0$ (in fact this is the complete resolution) and applying the Nakayama functor to get $I_\omega\to I_\epsilon$. The kernel of this map is $P_\beta$, so $\tau k_\epsilon=P_\beta$. The calculation of $\tau k_\delta$ is similar.

(2) We calculate the resolution $P_b\to P_a\to k_a\to 0$, apply the Nakayama functor to get $I_b\to I_a$ and observe that the kernel is $k_b$.
\end{proof}

\begin{example}
\label{AR-quiver-example}
We are ready to give a fairly elaborate example of the application of these methods. We calculate the Auslander-Reiten quivers of both $D^b(k\calP)$ and $k\calP$-mod when $\calP$ is the poset with Hasse diagram:
\begin{center}
\begin{tikzpicture}[xscale=.5,yscale=.5]
\draw (2,0)--(1,1)--(1,2)--(0,3)--(0,4)--(1,5)--(1,6)--(2,7)--(4,3.5)--(2,0);
\draw (1,2)--(2,3.5)--(1,5);
\draw[fill] (2,0) circle [radius=0.08];
\draw[fill] (1,1) circle [radius=0.08];
\draw[fill] (1,2) circle [radius=0.08];
\draw[fill] (0,3) circle [radius=0.08];
\draw[fill] (0,4) circle [radius=0.08];
\draw[fill] (1,5) circle [radius=0.08];
\draw[fill] (1,6) circle [radius=0.08];
\draw[fill] (2,7) circle [radius=0.08];
\draw[fill] (4,3.5) circle [radius=0.08];
\draw[fill] (2,3.5) circle [radius=0.08];

\node [ below] at  (2,0)  {$\omega$};
\node [ left] at  (1,1)   {$\theta$};
\node [ left] at  (1,2)   {$\eta$};
\node [ left] at  (0,3)   {$\epsilon$};
\node [ left] at  (0,4)   {$\delta$};
\node [ left] at  (1,5)   {$\gamma$};
\node [ left] at  (1,6)   {$\beta$};
\node [ above] at  (2,7)   {$\alpha$};
\node [ right] at  (4,3.5)   {$\iota$};
\node [ right] at  (2,3.5)   {$\zeta$};
\end{tikzpicture}
\end{center}
It is remarkable that we will compute the quiver of $k\calP$-mod without computing the structure of every module in detail. 
We see that $\calP$ is obtained by iterative clamping, starting from the intervals $[\delta,\epsilon]$, $[\zeta,\zeta]$ and $[\iota,\iota]$ and so, according to Proposition~\ref{inductive-clamping-proposition}, $k\calP$ piecewise hereditary, being derived equivalent to the path algebra of a quiver with underlying tree $T$:
\begin{center}
\begin{tikzpicture}[scale=.7]
\draw (0,0)--(1,0)--(2,0)--(3,0)--(4,0)--(5,0)--(6,0)--(7,0);
\draw (1,0)--(1,1); \draw (5,0)--(5,1);
\foreach \y in {0,1,2,3,4,5,6,7}
\draw[fill] (\y,0) circle [radius=0.08];
\draw[fill] (1,1) circle [radius=0.08];
\draw[fill] (5,1) circle [radius=0.08];
\end{tikzpicture}
\end{center}
which is of wild representation type. On the other hand we claim that $k\calP$ is of finite representation type, and that its Auslander-Reiten quiver has shape indicated by the following diagram:

\begin{center}
\begin{tikzpicture}[scale=.5]
\draw[dashed] (0,-3)--(0,8);
\foreach \y in {2,4,6}
\draw[shift={(\y,0)}]
(-17,6) -- (-12,1);
\draw (-17,4)--(-14,1); \draw (-18,3)--(-16,1); 
\draw  (-15,6)--(-19,2)--(-18,1);
\draw (-16,5)--(-12,1);
\foreach \y in {0,2,4}
\draw[shift={(\y,0)}]
(-9,6) -- (-2,-1);
\foreach \y in {0,2,4,6,8,10,12,14}
\draw[shift={(\y,0)}]
(-18,1) -- (-13,6);
\draw (2,-1)--(4,1)--(-1,6);
\draw (0,-1)--(4,3)--(1,6);
\draw (-2,-1)--(4,5)--(3,6);
\draw (-3,0)--(3,6);
\draw (-3,6)--(3,0);
\draw (-17,4)--(4,4);
\draw (-1,0)--(1,0);
\foreach \y in {-16,-14,-12,-10,-8,-6,-4,-2,0,2,4}
\draw[fill] (\y,4) circle [radius=0.05];
\draw[fill] (0,0) circle [radius=0.05];
\foreach \y in {-11,-5,-3,-1,1,3}
\node [below] at (\y,6.1) {$\scriptstyle 0$};
\foreach \y in {-15,-13,-9,-7}
\node [below] at (\y,6.1) {$\scriptstyle 1$};
\foreach \y in {-4, -2, 0, 2, 4}
\node [below] at (\y,5.1) {$\scriptstyle 0$};
\foreach \y in {-16,-12,-10,-6}
\node [below] at (\y,5.1) {$\scriptstyle 1$};
\foreach \y in {-14,-8}
\node [below] at (\y,5.1) {$\scriptstyle 2$};
\foreach \y in {-4, -3,-2,-1,0,1,2,3,4}
\node [below] at (\y,4.1) {$\scriptstyle 0$};
\foreach \y in {-17,-16,-14,-12,-10,-8,-6,-5}
\node [below] at (\y,4.1) {$\scriptstyle 1$};
\foreach \y in {-15,-13,-11,-9,-7}
\node [below] at (\y,4.1) {$\scriptstyle 2$};
\foreach \y in {-2,0,2,4}
\node [below] at (\y,3.1) {$\scriptstyle 0$};
\foreach \y in {-18,-16,-14,-8,-6,-4}
\node [below] at (\y,3.1) {$\scriptstyle 1$};
\foreach \y in {-12,-10}
\node [below] at (\y,3.1) {$\scriptstyle 2$};
\foreach \y in {-15,-7,-1,1,3}
\node [below] at (\y,2.1) {$\scriptstyle 0$};
\foreach \y in {-19,-17,-13,-9,-5,-3}
\node [below] at (\y,2.1) {$\scriptstyle 1$};
\foreach \y in {-11}
\node [below] at (\y,2.1) {$\scriptstyle 2$};
\foreach \y in {-16,-14,-8,-6,0,2,4}
\node [below] at (\y,1.1) {$\scriptstyle 0$};
\foreach \y in {-18,-12,-10,-4,-2}
\node [below] at (\y,1.1) {$\scriptstyle 1$};
\foreach \y in {1,3}
\node [below] at (\y,.1) {$\scriptstyle 0$};
\foreach \y in {-3,-1,0}
\node [below] at (\y,.1) {$\scriptstyle 1$};
\foreach \y in {0,2}
\node [below] at (\y,-0.9) {$\scriptstyle 0$};
\foreach \y in {-2}
\node [below] at (\y,-0.9) {$\scriptstyle 1$};

\node [ below] at (0,-0.9)  {$\scriptstyle 0$};
\end{tikzpicture}
\end{center}
In this diagram the vertical dashed line indicates that the picture continues to the right as a reflection of the part to the left of the vertical line. The numbers  attached to vertices are the values of the  function $f_\omega$, which is not invariant under reflection in the dashed line.

To show that this is the Auslander-Reiten quiver of $k\calP$-mod, we observe that the quiver component containing $P_\alpha$ is a subset of $\ZZ T$ by Proposition~\ref{AR-quiver-subset-of-ZT}. By Proposition~\ref{tau-calculations} and Corollary~\ref{triangle-to-sequence-copying} the following is part of this quiver component, because it is part of the quiver of $D^b(k\calP)$ and all the terms are modules:
\begin{center}
{
\def\diagram#1{\null\,\vcenter{\tempbaselines
\mathsurround=0pt
    \ialign{\hfil$##$\hfil&&\hskip15pt\hfil$##$\hfil\crcr
      \mathstrut\crcr\noalign{\kern-\baselineskip}
  #1\crcr\mathstrut\crcr\noalign{\kern-\baselineskip}}}\,}
$\diagram{&&&&&&\clap{k_\epsilon}&&&&\clap{k_\delta}\cr
&&&&&&&\searrow&&\nearrow\cr
&&&&&&&&\clap{k_{[\delta,\epsilon]}}&\cr
&&&&&&&\nearrow&&\searrow\cr
&&&&&&\clap{\Rad k_{[\gamma,\eta]}}&\to&\clap{k_\zeta}&\to&\rlap{$k_{[\gamma,\eta]}/\Soc k_{[\gamma,\eta]}$}\cr
&&&&&&&\searrow&&\nearrow\cr
&&&&&&&&\clap{k_{[\gamma,\eta]}}&\cr
&&&&&&&\nearrow&&\searrow\cr
&&&&&&\clap{k_{[\gamma,\theta]}}&&&&\clap{k_{[\beta,\eta]}}\cr
&&&&&&&\searrow&&\nearrow\cr
\clap{\Rad P_\gamma}&&&&\clap{?}&&&&\clap{k_{[\beta,\theta]}}&&&\cr
&\searrow&&\nearrow&&\searrow&&\nearrow&&\searrow\cr
&&\clap{P_\gamma}&&&&\clap{\Rad P_\alpha}&\to&\clap{P_\alpha}&\to&\rlap{$P_\alpha/\Soc P_\alpha$}\cr
&&&\searrow&&\nearrow&&\searrow&&\nearrow\cr
&&&&\clap{P_\beta}&&&&\clap{k_\iota}&\cr
}
$
}
\end{center}
We write ? to denote a module whose precise structure has not been determined.
Notice that the remaining indecomposable projectives not listed in the above diagram have the property that their radical is also projective, and so the following is part of the Auslander-Reiten quiver of $k\calP$-mod:
\begin{center}
{
\def\diagram#1{\null\,\vcenter{\tempbaselines
\mathsurround=0pt
    \ialign{\hfil$##$\hfil&&\hskip10pt\hfil$##$\hfil\crcr
      \mathstrut\crcr\noalign{\kern-\baselineskip}
  #1\crcr\mathstrut\crcr\noalign{\kern-\baselineskip}}}\,}
$\diagram{&&&&&&&&&&\clap{P_\delta}\cr
&&&&&&&&&\nearrow\cr
&&&&&&&&\clap{P_\epsilon}&\cr
&&&&&&&\nearrow&&\cr
&&&&&&\clap{P_\eta}&\to&\clap{P_\zeta}&&\cr
&&&&&\nearrow&&&&\cr
&&&&\clap{P_\theta}&&&&&\cr
&&&\nearrow&&&&&&\cr
&&\clap{P_\omega}&&&&&&&&\cr
&&&\searrow&&&&&&\cr
&&&&\clap{P_\iota}&&&&&&&\cr
}
$
}
\end{center}
From this we deduce that these projectives all lie in the same quiver component, so either they are all in the same component as $P_\alpha$ or none of them is. We conclude that the $\tau$-orbits in the component of $P_\alpha$ that do not terminate in $P_\alpha$, $P_\beta$ or $P_\gamma$ are either all infinite, or else they all terminate to the left, since there are 7 of these orbits and 7 projectives that might terminate them.

To show that the orbits are finite we consider the function $f_\omega$ defined before as $f_\omega(M):=\dim_k\Hom(P_\omega,M)$, which is additive on every mesh. The values of $f_\omega$ on the part of the quiver already computed are sufficient to compute $f_\omega$ everywhere, by additivity. These values are shown on the quiver diagram. If $\tau$-orbits were infinite the values shown would continue with negative values, which is not possible. We deduce that the quiver component is finite, and hence by a theorem of Auslander that $k\calP$ has finite representation type and this is the entire quiver. Furthermore $f_\omega$ takes the value 1 on all indecomposable projectives, so the quiver must terminate to the left where there is a pattern of 1s the same as the pattern of the projectives. The only time this happens before $f_\omega$ becomes negative is at the left of what is shown of the quiver, and hence the quiver must terminate at the left as shown. Because $\calP$ is isomorphic to its opposite poset, the quiver must terminate to the right with injectives in the same fashion.
\end{example}

\begin{example} \label{equivalent_to_different_rep_type}
We may compute the Auslander-Reiten quivers of both $D^b(k\calP)$ and of $k\calP$-mod by these means for many other posets $\calP$. For instance, the poset
\begin{center}
\begin{tikzpicture}[xscale=.5,yscale=.5]
\draw (2,0)--(1,1)--(0,2)--(1,3)--(2,4)--(4,2)--(2,0);
\draw (1,1)--(2,2)--(1,3);
\draw[fill] (2,0) circle [radius=0.08];
\draw[fill] (1,1) circle [radius=0.08];
\draw[fill] (0,2) circle [radius=0.08];
\draw[fill] (1,3) circle [radius=0.08];
\draw[fill] (2,4) circle [radius=0.08];
\draw[fill] (4,2) circle [radius=0.08];
\draw[fill] (2,2) circle [radius=0.08];

\node [ below] at  (2,0)  {$\omega$};
\node [ left] at  (1,1)   {$\epsilon$};
\node [ right] at  (2,2)   {$\delta$};
\node [ left] at  (0,2)   {$\gamma$};
\node [ left] at  (1,3)   {$\beta$};
\node [ above] at  (2,4)   {$\alpha$};
\node [ right] at  (4,2)   {$\zeta$};
\end{tikzpicture}
\end{center}
has finite representation type but is derived equivalent to the path algebra of a tree with underlying diagram
\begin{center}
\begin{tikzpicture}[scale=1]
\draw (0,0)--(1,0)--(2,0)--(3,0)--(4,0);
\draw (1,0)--(1,1); \draw (3,0)--(3,1);
\foreach \y in {0,1,2,3,4}
\draw[fill] (\y,0) circle [radius=0.08];
\draw[fill] (1,1) circle [radius=0.08];
\draw[fill] (3,1) circle [radius=0.08];
\end{tikzpicture}
\end{center}
which is Euclidean and of tame representation type

The poset
\begin{center}
\begin{tikzpicture}[xscale=.5,yscale=.5]
\draw (2,0)--(1,1)--(0,2)--(0,3)--(1,4)--(2,5)--(4.5,2.5)--(2,0);
\draw (1,1)--(2,2)--(2,3)--(1,4);
\draw[fill] (2,0) circle [radius=0.08];
\draw[fill] (1,1) circle [radius=0.08];
\draw[fill] (0,2) circle [radius=0.08];
\draw[fill] (0,3) circle [radius=0.08];
\draw[fill] (1,4) circle [radius=0.08];
\draw[fill] (2,5) circle [radius=0.08];
\draw[fill] (4.5,2.5) circle [radius=0.08];
\draw[fill] (2,2) circle [radius=0.08];
\draw[fill] (2,3) circle [radius=0.08];
\node [ below] at  (2,0)  {$\omega$};
\node [ left] at  (1,1)   {$\eta$};
\node [ right] at  (2,2)   {$\zeta$};
\node [ left] at  (0,2)   {$\epsilon$};
\node [ right] at  (2,3)   {$\delta$};
\node [ left] at  (0,3)   {$\gamma$};
\node [ left] at  (1,4)   {$\beta$};
\node [ above] at  (2,5)   {$\alpha$};
\node [ right] at (4.5,2.5) {$\theta$};
\end{tikzpicture}
\end{center}
is of infinite representation type and has a stable component of the Auslander-Reiten quiver of $k\calP$-mod that has Euclidean tree class of type $\tilde E_6$.  We leave the details to the reader.
\end{example}

We can extract from the above methods a criterion for a poset to be of finite representation type.

\begin{corollary}
\label{finite-rep-type-criterion}
Let $\calP$ be a poset satisfying the conditions of Proposition~\ref{inductive-clamping-proposition}, so that the component of the Auslander-Reiten quiver of $D^b(k\calP)$ containing $P_\alpha$ has tree class $T$. If, after removing the vertex of $T$ corresponding to $P_\alpha$, the resulting tree is a Dynkin diagram then $k\calP$ has finite representation type.
\end{corollary}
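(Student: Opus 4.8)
The plan is to prove that the connected component $\calC$ of the Auslander--Reiten quiver of $k\calP$-mod containing $P_\alpha$ is finite; then $k\calP$ has finite representation type by the theorem of Auslander that an algebra possessing a finite Auslander--Reiten component is of finite representation type. By Proposition~\ref{inductive-clamping-proposition} the slice $\calS$ of that proposition lies in $\calC$, and by Proposition~\ref{AR-quiver-subset-of-ZT} the component $\calC$ sits inside $\ZZ T$ with $\calS$ identified with a copy of $T$, and $\calC$ is a union of $\tau$-orbit intervals, one meeting each vertex of $\calS$. Since $P_\alpha=I_\omega$ is both projective and injective, its $\tau$-orbit is the single vertex $\{P_\alpha\}$; and the vertex $v$ of $T$ carrying $P_\alpha$ is extreme (the free end of the edge adjoined in Proposition~\ref{inductive-clamping-proposition}(2)), so $T':=T\setminus\{v\}$ is connected --- it is the tree of the hypothesis --- and $\calC\setminus\{P_\alpha\}$ lies inside $\ZZ T'$.

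The key step is to show that every module in $\calC$ other than $P_\alpha$ is unsupported at $\omega$ or unsupported at $\alpha$. Consider the nonnegative mesh-additive functions $f_\omega(M):=\dim_k\Hom_{k\calP}(P_\omega,M)=\dim_k M(\omega)$ and $f_\alpha(M):=\dim_k\Hom_{k\calP}(M,I_\alpha)=\dim_k M(\alpha)$ on $\calC$. By Hypothesis~\ref{wing-hypothesis} the only module of $\calS$ with $\omega$ in its support is $P_\alpha$, so $f_\omega$ vanishes on $\calS\setminus\{P_\alpha\}$; and a module of $\calS$ has $\alpha$ in its support only if it is $P_\alpha$ or $P_\alpha/\Soc P_\alpha$, so $f_\alpha$ vanishes on the remaining modules of $\calS$. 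Since $\calC$ is obtained from $\calS$ by knitting in the two directions and every $\tau$-orbit interval of $\calC$ meets $\calS$, it suffices to check that knitting toward $\tau^{-1}$ keeps $f_\omega=0$ and knitting toward $\tau$ keeps $f_\alpha=0$. The first holds because $P_\alpha$, being injective, is a successor of no vertex of $\calS$, so the value $f_\omega(P_\alpha)=1$ is never transported into the $\tau^{-1}$-direction; the second because $P_\alpha$, being projective, is a predecessor only of $P_\alpha/\Soc P_\alpha$, so $f_\alpha(P_\alpha)=1$ is cancelled against $f_\alpha(P_\alpha/\Soc P_\alpha)=1$ in the single mesh in which it occurs and never re-enters. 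Thus $\calC\setminus\{P_\alpha\}$ is the union of the modules of $\calC$ unsupported at $\omega$ and those unsupported at $\alpha$.

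A $k\calP$-module unsupported at $\omega$ is exactly a module for the quotient algebra $k(\calP\setminus\{\omega\})$, and similarly for $\alpha$. Each of the posets $\calP\setminus\{\omega\}=\{\alpha\}\cup(\alpha,\omega)$ and $\calP\setminus\{\alpha\}=(\alpha,\omega)\cup\{\omega\}$ arises by adjoining a single extreme element to the disjoint union $(\alpha,\omega)=[a_1,z_1]\sqcup\cdots\sqcup[a_n,z_n]$ of the intervals satisfying Hypothesis~\ref{wing-hypothesis}; carrying out the argument of Propositions~\ref{glue-proposition} and \ref{modified-clamping-proposition} for a disjoint union of intervals in place of a single interval shows that $k(\calP\setminus\{\omega\})$ and $k(\calP\setminus\{\alpha\})$ are piecewise hereditary with tree class $T'=T\setminus\{v\}$ --- the star with one edge per component of $(\alpha,\omega)$, carrying the tree $T_i$ glued at the vertex of its largest projective. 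Since $T'$ is a Dynkin diagram, $kT'$ is of finite representation type, hence so is any algebra derived equivalent to it (the bounded derived category then has finitely many indecomposables up to shift, and each lies in the heart of the standard $t$-structure for at most one shift). Therefore there are only finitely many $k\calP$-modules unsupported at $\omega$ and only finitely many unsupported at $\alpha$, so $\calC$ is finite and the theorem of Auslander completes the proof.

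The step I expect to be the main obstacle is the support analysis in the second paragraph: making rigorous that the knitting of $\calC$ out of $\calS$ really stays among the modules unsupported at $\omega$, respectively at $\alpha$. This hinges on $P_\alpha$ being proj-injective, so that the single failure of mesh-additivity of $f_\omega$ on $\calC\setminus\{P_\alpha\}$ (and of $f_\alpha$) sits on the correct side of $\calS$; but the bookkeeping near $P_\alpha$ --- where $\calC$ is not locally of the form $\ZZ T$ --- requires care. A secondary point is the extension of Proposition~\ref{modified-clamping-proposition} from a single interval to the disjoint union $[a_1,z_1]\sqcup\cdots\sqcup[a_n,z_n]$, which should follow the same inductive pattern but must be written out.
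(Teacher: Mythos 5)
Your overall strategy (show the component of $P_\alpha$ in $k\calP$-mod is finite and invoke Auslander's theorem, using Proposition~\ref{AR-quiver-subset-of-ZT} and the additive function $f_\omega$) matches the paper's, and your second paragraph is essentially a re-derivation of what is already inside the proof of Proposition~\ref{AR-quiver-subset-of-ZT} and its dual, so that part is sound. The gap is in your third paragraph, which is where your route genuinely diverges from the paper's. You reduce finiteness to the claim that $k(\calP\setminus\{\omega\})$ and $k(\calP\setminus\{\alpha\})$ are piecewise hereditary of Dynkin type $T'$, and you justify this by saying the arguments of Propositions~\ref{glue-proposition} and~\ref{modified-clamping-proposition} extend to a disjoint union of intervals. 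They do not extend by the same pattern: when $n\ge 2$ the poset $\calQ=\{\alpha\}\cup([a_1,z_1]\sqcup\cdots\sqcup[a_n,z_n])$ has several maximal elements, so $P_\alpha^{\calQ}=k_\calQ$ is no longer injective and $\Soc P_\alpha^{\calQ}$ is no longer simple; the Auslander--Reiten triangles of Proposition~\ref{glue-proposition}(1), on which the whole gluing and slice construction is anchored, fail as stated for $\calQ$. Nor can you simply transport the slice $\calS\setminus\{P_\alpha\}$ from $D^b(k\calP)$ to $D^b(k\calQ)$: for modules unsupported at $\omega$ the Hom groups over $k\calQ$ and $k\calP$ agree, but the Ext/shifted Hom groups do not (e.g.\ $P_{z_i}$ is simple projective over $k\calQ$ but not over $k\calP$), so the tilting computation of Proposition~\ref{inductive-clamping-proposition}(3)--(4) has to be redone from scratch over $k\calQ$. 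As written, the finiteness of the two quotient algebras is therefore asserted, not proved, and it is the crux of your argument rather than a ``secondary point.''

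The paper closes the argument without any statement about quotient algebras: by Proposition~\ref{AR-quiver-subset-of-ZT} any infinite $\tau$-orbits on the left of $\calS$ lie over the subdiagram of $T$ obtained by deleting the vertex of $P_\alpha$, which by hypothesis is a (union of) Dynkin diagram(s); the function $f_\omega$ is nonzero on such orbits (any terminating neighbouring orbit connects to them through an irreducible map $\Rad P_\sigma\to P_\sigma$, on which $f_\omega=1$), and since there is no nonzero nonnegative additive function on $\ZZ\Delta$ for $\Delta$ Dynkin, $f_\omega$ would have to go negative --- a contradiction. A dual argument with $f_\alpha$ handles the right side. If you want to keep your quotient-algebra route, you must actually prove the piecewise-hereditary statement for $\calQ$ (for instance by an honest induction on $n$ adapted to posets with a unique minimum but several maxima); otherwise replace your third paragraph with the additive-function argument, which works entirely inside $\ZZ T$ and needs nothing beyond what Proposition~\ref{AR-quiver-subset-of-ZT} already gives.
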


\begin{proof}
We know from Proposition~\ref{AR-quiver-subset-of-ZT} that the component of the Auslander-Reiten quiver of $k\calP$-mod that contains $P_\alpha$ is a subset of $\ZZ T$, and the infinite $\tau$-orbits to the left of the slice $S$ correspond to a subdiagram of $T$ that does not include the vertex for $P_\alpha$. This subdiagram must be a union of Dynkin trees. The function $f_\omega$ must be non-zero on these infinite orbits, since any adjacent finite $\tau$-orbit must connect to an infinite orbit via an irreducible morphism of the form $\Rad P_\sigma \to P_\sigma$ and $f_\omega$ is 1 on both these modules. We now conclude that $f_\omega$ must take negative values on some infinite $\tau$-orbit, if it exists (because there is no positive additive function on $\ZZ\Delta$ when $\Delta$ is Dynkin), which is not possible. From this we conclude that the Auslander-Reiten quiver is finite to the left, and by a dual argument it is finite to the right. Hence the Auslander-Reiten quiver of $k\calP$ has a finite component, and by a theorem of Auslander $k\calP$ has finite representation type.
\end{proof}

We note that the converse of Corollary~\ref{finite-rep-type-criterion} is false, as is shown by Example~\ref{AR-quiver-example}.

\section{$\mathcal{IC}$ posets of finite representation type}

The posets of finite representation type were determined by Loupias \cite{Lou}, and enumerated by Drozdowski and Simson \cite{DS}.  Here we illustrate how the techniques from the previous section can be used to determine the Auslander-Reiten quivers of $\mathcal{IC}$ posets of finite representation type.  In \cite{DS} the authors organized the posets of finite representation type into families where the $\mathcal{IC}$ posets of finite representation type are named Rys 28, 29, and 30.  We remark that while the families Rys 1, 2, and 12 are not necessarily $\mathcal{IC}$ posets, clamping theory can also be used to examine their Auslander-Reiten quivers.

In what follows we divide the $\mathcal{IC}$ posets of finite representation type into eight classes. For each class we give a slice of the Auslander-Reiten quiver of $k \mathcal P$-mod containing the projective module $P_{\alpha}$ that we denote by an open circle in our diagram.  While our focus is on the module category, we remark that, by Corollary~\ref{wing-copy-corollary}, this diagram is the tree class of an Auslander-Reiten component in the derived category $D^b(k\calP)$. 

For all of these algebras, the subquiver of their Auslander-Reiten quiver consisting of only the projective modules is a disjoint union of Dynkin diagrams.  For the classes (1)--(4), after removing the vertex corresponding to $P_{\alpha}$, the slice of the Auslander-Reiten quivers is a Dynkin diagram. We remark that these algebras can be seen to have finite representation type by a quick application of Corollary~\ref{finite-rep-type-criterion}.   The remaining classes (5)--(8) behave similarly to Examples~\ref{AR-quiver-example} and \ref{equivalent_to_different_rep_type}.  That is, the Auslander-Reiten quiver has some $\tau$-orbits that terminate quickly.  These orbits correspond to the boxed vertices in our diagrams.  After removing these vertices and the vertex corresponding to $P_{\alpha}$ the remaining $\tau$-orbits, given as filled circles, form a Dynkin diagram.  

We illustrate our notation on Example~\ref{AR-quiver-example}. There the slice for the Auslander-Reiten quiver of $k \mathcal P$-mod would be the tree  

\begin{center}
 \begin{tikzpicture}[scale=1]
\phantom{\draw (8,0);}
\draw[black] (1,0)-- (2,0)--(3,0)--(4,0)-- (5,0)--(6,0)--(6.9,0);
\draw[gray] (7.1,0)--(7.9,0);
\draw(3,0)--(3,0.925);
\draw(7,.1)--(7,0.925);
\foreach \y in {1,2,3,4,5,6}
\draw[fill, black] (\y,0) circle [radius=0.08];
\draw (6.9,-.1) rectangle (7.1,.1);
\draw (7.9,-.1) rectangle (8.1,.1);
\draw[fill, black] (3,1) circle [radius=0.08];
\draw[black] (7,1) circle [radius=0.08];
\end{tikzpicture}
\end{center}



We now give the promised list of  $\mathcal{IC}$ posets of finite representation type.  

\begin{center}
\begin{tabular}{ |c|c| c| }
\hline
    Poset & Slice of Auslander-Reiten Quiver & Rys Label   \\ \hline
%
%
\begin{tikzpicture}[xscale=.5,yscale=.5]
\phantom{\draw (1,6);}
\draw (1,0)--(0,2.5)--(1,5)--(1,0)--(2,1)--(2,2) (2,3)--(2,4)--(1,5);
\draw[dotted] (2,2)--(2,3);
\foreach \x in {0,1}
\draw[fill] (\x,2.5) circle [radius=0.08];
\foreach \y in {1,2,3,4}
\draw[fill] (2,\y) circle [radius=0.08];
\draw[fill] (1,0) circle [radius=0.08];
\draw[fill] (1,5) circle [radius=0.08];
\draw[decorate, decoration = {brace, amplitude = 5pt, mirror}, xshift = 5pt, yshift = 0pt] (2,1)--(2,4) node[black, midway, xshift = 10pt]{\footnotesize $p$};
\end{tikzpicture}
& 
\begin{tikzpicture}[scale=.75]
\draw (0,0)--(1,0)--(2,0)--(3,0);
\draw[gray] (3,0) (4,0)--(5,0);
\draw[thick, dotted] (3.2,0)--(3.8,0);
\draw (1,0)--(1,0.925); \draw (1,0)--(1,-1);
\foreach \y in {0,1}
\draw[fill] (\y,0) circle [radius=0.08];
\foreach \x in {2,3,4,5}
\draw[gray, fill] (\x,0) circle [radius=0.08];
\draw[black] (1,1) circle [radius=0.08];
\draw[fill] (1,-1) circle [radius=0.08];
\draw[decorate, decoration = {brace, amplitude = 5pt, mirror}, xshift = 0pt, yshift = -4pt] (2,0)--(5,0) node[black, midway, yshift = -10pt, rotate = 0]{\footnotesize $p$};
\end{tikzpicture}
& 
\begin{tabular}{c} $28$ \\  \small{$p\geq 1 $} \end{tabular}\\ \hline
%
%
%
\begin{tikzpicture}[xscale=.5,yscale=.5]
\phantom{\draw (1,5);}
\draw (1,0)--(0,1.3)--(0,2.7)--(1,4)--(1,0)--(2,1) (2,2)--(2,3)--(1,4);
\draw[dotted] (2,1)--(2,2);
\draw[fill] (1,2) circle [radius=0.08];
\foreach \y in {1.3,2.7}
\draw[fill] (0,\y) circle [radius=0.08];
\foreach \y in {1,2,3}
\draw[fill] (2,\y) circle [radius=0.08];
\draw[fill] (1,0) circle [radius=0.08];
\draw[fill] (1,4) circle [radius=0.08];
\draw[decorate, decoration = {brace, amplitude = 5pt, mirror}, xshift = 5pt, yshift = 0pt] (2,1)--(2,3) node[black, midway, xshift = 10pt]{\footnotesize $p$};
\end{tikzpicture} & 
%
\begin{tikzpicture}[scale=.75]
\draw (-1,0)--(0,0)--(1,0)--(2,0)--(3,0);
\draw[gray] (2,0)--(3,0) (4,0);
\draw[thick, dotted] (3.2,0)--(3.8,0);
\draw (1,0)--(1,0.925); \draw (1,0)--(1,-1);
\foreach \y in {-1,0,1}
\draw[fill] (\y,0) circle [radius=0.08];
\foreach \x in {2,3,4}
\draw[gray, fill] (\x,0) circle [radius=0.08];
\draw[black] (1,1) circle [radius=0.08];
\draw[fill] (1,-1) circle [radius=0.08];
\draw[decorate, decoration = {brace, amplitude = 5pt, mirror}, xshift = 0pt, yshift = -4pt] (2,0)--(4,0) node[black, midway, yshift = -10pt, rotate = 0]{\footnotesize $p$};
\end{tikzpicture} & 
\begin{tabular}{c} 28 \\ \small{$2\leq p \leq 4$ } \end{tabular} \\ \hline
%
%
%
\begin{tikzpicture}[xscale=.5,yscale=.5]
\phantom{\draw (1,7);}
\draw (0,5)--(1,6)--(2,4)--(2,3) (2,2)--(2,1)--(1,0);
\draw (1,0)--(0,1)--(0,2) (0,3)--(0,5);
\draw[dotted] (0,2)--(0,3) (2,2)--(2,3);
\foreach \y in {1,2,3,4,5}
\draw[fill] (0,\y) circle [radius=0.08];
\foreach \y in {1,2,3,4}
\draw[fill] (2,\y) circle [radius=0.08];
\draw[fill] (1,0) circle [radius=0.08];
\draw[fill] (1,6) circle [radius=0.08];
\draw[decorate, decoration = {brace, amplitude = 5pt}, xshift=-5pt, yshift=-0pt] (0,1)--(0,5) node[black, midway, xshift = -10pt]{\footnotesize $n$};
\draw[decorate, decoration = {brace, amplitude = 5pt, mirror}, xshift = 5pt, yshift = 0pt] (2,1)--(2,4) node[black, midway, xshift = 10pt]{\footnotesize $p$};
\end{tikzpicture} & 
\begin{tikzpicture}[scale=.75]
\draw[gray] (0,0)--(1,0) (2,0)--(3,0);
\draw[thick, dotted] (1.2,0)--(1.8,0);
\draw[gray] (3,0)--(4,0) (5,0)--(6,0);
\draw[thick, dotted] (4.2,0)--(4.8,0);
\draw(3,0)--(3,0.925);
\draw[black, fill] (3,0) circle [radius=0.08];
\foreach \y in {0,1,2}
\draw[fill, gray] (\y,0) circle [radius=0.08];
\foreach \x in {4,5,6}
\draw[fill,gray] (\x,0) circle [radius=0.08];
\draw[black] (3,1) circle [radius=0.08];
\draw[decorate, decoration = {brace, amplitude = 5pt, mirror}, xshift=0pt, yshift=-4pt] (0,0)--(2,0) node[black, midway, yshift = -10pt, rotate =0]{\footnotesize $n$};
\draw[decorate, decoration = {brace, amplitude = 5pt, mirror}, xshift = 0pt, yshift = -4pt] (4,0)--(6,0) node[black, midway, yshift = -10pt, rotate = 0]{\footnotesize $p$};
\end{tikzpicture} 
& 
\begin{tabular}{c} $29$  \\ \small{$n \geq 0$}  \\ \small{$p\geq 0$} \end{tabular}
\\ \hline
%
%

%
\begin{tikzpicture}[xscale=.5,yscale=.5]
\phantom{\draw (1,8);}
\draw (2,0)--(1,1)--(1,1.75) (1,2.5)--(0,3.5)--(1,4.5) (1,5.25)--(1,6)--(2,7)--(4,5)--(4,4) (4,3)--(4,2)--(2,0);
\draw[dotted] (1,1.75)--(1,2.5) (1,4.5)--(1,5.25) (4,4)--(4,3);
\draw (1,2.5)--(2,3.5)--(1,4.5);
\draw[fill] (2,0) circle [radius=0.08];
\draw[fill] (1,1) circle [radius=0.08];
\draw[fill] (1,1.75) circle [radius=0.08];
\draw[fill] (1,2.5) circle [radius=0.08];
\draw[fill] (0,3.5) circle [radius=0.08];
\draw[fill] (1,4.5) circle [radius=0.08];
\draw[fill] (1,5.25) circle [radius=0.08];
\draw[fill] (1,6) circle [radius=0.08];
\draw[fill] (2,7) circle [radius=0.08];
\foreach \y in {2,3,4,5}
\draw[fill] (4,\y) circle [radius=0.08];
\draw[fill] (2,3.5) circle [radius=0.08];
\draw[decorate, decoration = {brace, amplitude = 5pt, mirror}, xshift = 5pt, yshift = 0pt] (4,2)--(4,5) node[black, midway, xshift = 10pt]{\footnotesize $p$};
\draw[decorate, decoration = {brace, amplitude = 5pt}, xshift = -5pt, yshift = 0pt] (1,1)--(1,2.5) node[black, midway, xshift = -10pt]{\footnotesize $r$};
\draw[decorate, decoration = {brace, amplitude = 5pt}, xshift = -5pt, yshift = 0pt] (1,4.5)--(1,6) node[black, midway, xshift = -10pt]{\footnotesize $n$};
\end{tikzpicture} & 
%
\begin{tikzpicture}[scale=.75]
\draw (0,0)--(1,0)--(2,0)--(3,0);
\draw (6,0)--(7,0);
\draw[gray] (3,0)--(4,0) (5,0)--(6,0);
\draw[thick, dotted] (4.2,0)--(4.8,0);
\draw[gray] (7,0)--(8,0) (9,0)--(10,0);
\draw[thick, dotted] (8.2,0)--(8.8,0);
\draw (1,0)--(1,1); \draw (7,0)--(7,0.925);
\foreach \y in {0,1,2,7}
\draw[fill] (\y,0) circle [radius=0.08];
\foreach \x in {3,4,5,6}
\draw[gray, fill] (\x,0) circle [radius=0.08];
\foreach \z in {8,9,10}
\draw[gray, fill] (\z,0) circle [radius=0.08];
\draw[fill] (1,1) circle [radius=0.08];
\draw (7,1) circle [radius=0.08];
\draw[decorate, decoration = {brace, amplitude = 5pt, mirror}, xshift = 0pt, yshift = -4pt] (8,0)--(10,0) node[black, midway, yshift = -10pt, rotate = 0]{\footnotesize $p$};
\draw[decorate, decoration = {brace, amplitude = 5pt, mirror}, xshift = 0pt, yshift = -4pt] (3,0)--(6,0) node[black, midway, yshift = -10pt, rotate = 0]{\footnotesize $n+r$};
\end{tikzpicture} & 
\begin{tabular}{c} $30$ \\ \small{$ n\geq 1$} \\ \small{$r \geq 1$}  \\ \small{$p \geq 1$}  \end{tabular}\\ \hline
%
%
\begin{tikzpicture}[scale=.5]
\phantom{\draw (1,6);}
\draw(1,0)--(0,1)--(1,2.5)--(0,4)  (0,1)--(-1,2) (-1,3)--(0,4)--(1,5)--(3,2.5)--(1,0);
\draw[dotted] (-1,2)--(-1,3);
\foreach \y in {0,2.5,5}
\draw[fill] (1,\y) circle [radius=0.08];
\foreach \y in {1,4}
\draw[fill] (0,\y) circle [radius=0.08];
\foreach \y in {2,3}
\draw[fill] (-1,\y) circle [radius=0.08];
\draw[fill] (3,2.5) circle [radius=0.08];
\draw[decorate, decoration = {brace, amplitude = 5pt}, xshift = -5pt, yshift = 0pt] (-1,2)--(-1,3) node[black, midway, xshift = -10pt]{\footnotesize $q$};
\end{tikzpicture} & 
\begin{tikzpicture}[scale=1]
\draw (0.1,0)--(1,0)--(2,0)--(3,0)--(4,0);
\draw[gray] (4,0)--(5,0) (6,0)--(7,0);
\draw[thick, dotted] (5.2,0) -- (5.8,0);
\draw (1,0)--(1,0.925); \draw (3,0)--(3,1);
\foreach \y in {1,2,3}
\draw[fill] (\y,0) circle [radius=0.08];
\foreach \x in {4,5,6,7}
\draw[gray, fill] (\x,0) circle [radius=0.08];
\draw (-.1,-.1) rectangle (.1,.1);
\draw (1,1) circle [radius=0.08];
\draw[fill] (3,1) circle [radius=0.08];
\draw[decorate, decoration = {brace, amplitude = 5pt, mirror}, xshift = 0pt, yshift = -4pt] (4,0)--(7,0) node[black, midway, yshift = -10pt, rotate = 0]{\footnotesize $q$};
\end{tikzpicture} & 
\begin{tabular}{c} 30 \\  \small{$1\leq q \leq 4 $} \\ \end{tabular} 
\\ \hline
\end{tabular}
\end{center} 
%
%
%
\begin{center}
\begin{tabular}{ |c|c| c| }
\hline
Poset & Slice of Auslander-Reiten Quiver & Rys Label   \\ \hline
%
\begin{tikzpicture}[scale=.5]
\phantom{\draw (1,6);}
\draw(1,0)--(0,1)--(1,2.5)--(0,4)  (0,1)--(-1,2)--(-1,3)--(0,4)--(1,5)--(2.5,3.5)  (2.5,1.5)--(1,0);
\draw[dotted] (2.5,3.5)--(2.5,1.5);
\foreach \y in {0,2.5,5}
\draw[fill] (1,\y) circle [radius=0.08];
\foreach \y in {1,4}
\draw[fill] (0,\y) circle [radius=0.08];
\foreach \y in {2,3}
\draw[fill] (-1,\y) circle [radius=0.08];
\foreach \y in {1.5,3.5}
\draw[fill] (2.5,\y) circle [radius=0.08];
\draw[decorate, decoration = {brace, amplitude = 5pt, mirror}, xshift = 5pt, yshift = 0pt] (2.5,1.5)--(2.5,3.5) node[black, midway, xshift = 10pt]{\footnotesize $p$};
\end{tikzpicture} & 
\begin{tikzpicture}[scale=1]
\draw[gray] (0.1,0)--(1,0) (2,0)--(3,0);
\draw[thick, dotted] (1.2,0)--(1.8,0);
\draw (3,0)--(4,0)--(5,0)--(6,0)--(7,0);
\draw (3,0)--(3,.925); \draw (5,0)--(5,1);
\foreach \y in {1,2}
\draw[fill, gray] (\y,0) circle [radius=0.08];
\foreach \x in {3,4,5,6,7}
\draw[fill] (\x,0) circle [radius=0.08];
\draw[gray] (-.1,-.1) rectangle (.1,.1);
\draw (3,1) circle [radius=0.08];
\draw[fill] (5,1) circle [radius=0.08];
\draw[decorate, decoration = {brace, amplitude = 5pt, mirror}, xshift = 0pt, yshift = -4pt] (0,0)--(2,0) node[black, midway, yshift = -10pt, rotate = 0]{\footnotesize $p$};
\end{tikzpicture} & 
\begin{tabular}{c} 30\\  \small{$1 \leq p\leq 3$} \end{tabular}
\\ \hline
%
%
%
%
\begin{tikzpicture}[xscale=.5,yscale=.5]
\phantom{\draw (1,8);}
\draw (2,0)--(1,1)--(1,2)--(0,3)--(0,4)--(1,5)--(1,6)--(2,7)--(4,3.5)--(2,0);
\draw (1,2)--(2,3.5)--(1,5);
\draw[fill] (2,0) circle [radius=0.08];
\draw[fill] (1,1) circle [radius=0.08];
\draw[fill] (1,2) circle [radius=0.08];
\draw[fill] (0,3) circle [radius=0.08];
\draw[fill] (0,4) circle [radius=0.08];
\draw[fill] (1,5) circle [radius=0.08];
\draw[fill] (1,6) circle [radius=0.08];
\draw[fill] (2,7) circle [radius=0.08];
\draw[fill] (4,3.5) circle [radius=0.08];
\draw[fill] (2,3.5) circle [radius=0.08];
\end{tikzpicture}  &
\begin{tikzpicture}[scale=1]
\draw (0.1,0)--(.9,0);
\draw (1.1,0)--(2,0)--(3,0)--(4,0)--(5,0)--(6,0)--(7,0);
\draw (1,0.1)--(1,.925); \draw (5,0)--(5,1);
\foreach \y in {2,3,4,5,6,7}
\draw[fill] (\y,0) circle [radius=0.08];
\draw (-.1,-.1) rectangle (.1,.1);
\draw (.9,-.1) rectangle (1.1,.1);
\draw[fill] (5,1) circle [radius=0.08];
\draw (1,1) circle [radius=0.08];
\end{tikzpicture} &
\begin{tabular}{c} 30\\  \end{tabular}
\\  \hline
%
%
%
\begin{tikzpicture}[xscale=.5,yscale=.5]
\phantom{\draw (1,9);}
\draw (2,-1)--(1,0)--(1,2)--(0,3)--(0,4)--(1,5)--(1,7)--(2,8)--(4,3.5)--(2,-1);
\draw (1,2)--(2,3.5)--(1,5);
\draw[fill] (2,-1) circle [radius=0.08];
\draw[fill] (1,0) circle [radius=0.08];
\draw[fill] (1,1) circle [radius=0.08];
\draw[fill] (1,2) circle [radius=0.08];
\draw[fill] (0,3) circle [radius=0.08];
\draw[fill] (0,4) circle [radius=0.08];
\draw[fill] (1,5) circle [radius=0.08];
\draw[fill] (1,6) circle [radius=0.08];
\draw[fill] (1,7) circle [radius=0.08];
\draw[fill] (2,8) circle [radius=0.08];
\draw[fill] (4,3.5) circle [radius=0.08];
\draw[fill] (2,3.5) circle [radius=0.08];
\end{tikzpicture}&
\begin{tikzpicture}[scale=1]
\draw (0.1,0)--(.9,0);
\draw (1.1,0)--(1.9,0);
\draw (2.1,0)--(3,0)--(4,0)--(4,0)--(5,0)--(6,0)--(7,0)--(8,0)--(9,0);
\draw (1,0.1)--(1,.925); \draw (7,0)--(7,1);
\foreach \y in {3,4,5,6,7,8,9}
\draw[fill] (\y,0) circle [radius=0.08];
\draw[fill] (7,1) circle [radius=0.08];
\draw (1,1) circle [radius=0.08];
\draw (-.1,-.1) rectangle (.1,.1);
\draw (.9,-.1) rectangle (1.1,.1);
\draw (1.9,-.1) rectangle (2.1,.1);
\end{tikzpicture}&
\begin{tabular}{c} 30\\   \end{tabular} \\
\hline
%
%
\begin{tikzpicture}[xscale=.5,yscale=.5]
\phantom{\draw (1,8);}
\draw (2,0)--(1,1)--(1,2)--(0,3)--(0,4)--(1,5)--(1,6)--(2,7)--(4,5)--(4,2)--(2,0);
\draw (1,2)--(2,3.5)--(1,5);
\draw[fill] (2,0) circle [radius=0.08];
\draw[fill] (1,1) circle [radius=0.08];
\draw[fill] (1,2) circle [radius=0.08];
\draw[fill] (0,3) circle [radius=0.08];
\draw[fill] (0,4) circle [radius=0.08];
\draw[fill] (1,5) circle [radius=0.08];
\draw[fill] (1,6) circle [radius=0.08];
\draw[fill] (2,7) circle [radius=0.08];
\draw[fill] (4,5) circle [radius=0.08];
\draw[fill] (4,2) circle [radius=0.08];
\draw[fill] (2,3.5) circle [radius=0.08];
\end{tikzpicture}&
\begin{tikzpicture}[scale=1]
\draw (-.9,0)--(-.1,0) (.1,0)--(2,0)--(3,0)--(4,0)--(5,0)--(6,0)--(7,0);
\draw (1,0)--(1,0.925); \draw (5,0)--(5,1);
\foreach \y in {1,2,3,4,5,6,7}
\draw[fill] (\y,0) circle [radius=0.08];
\foreach \x in {-1,0}
\draw[fill] (5,1) circle [radius=0.08];
\draw (1,1) circle [radius=0.08];
\draw (-1.1,-.1) rectangle (-.9,.1);
\draw (-.1,-.1) rectangle (.1,.1);
\end{tikzpicture}&
\begin{tabular}{c} 30\\  \end{tabular} \\
\hline
\end{tabular}
\end{center}



\begin{thebibliography}{99}

\bibitem{ASS} I. Assem, D. Simson and A. Skowronski, \textit{Elements of the representation theory of associative algebras Vol. 1: Techniques of representation theory,} London Mathematical Society Student Texts \textbf{65}, Cambridge University Press, Cambridge, 2006.

\bibitem{ARS} M. Auslander, I. Reiten and S.O. Smal\o, \textit{Representation theory of Artin algebras,} Cambridge studies in advanced mathematics \textbf{36}, Cambridge University Press, Cambridge, 1995.

\bibitem{Cha} F. Chapoton, \textit{On the Categories of Modules Over the Tamari Posets} pp. 269--280 in: F. M{\"u}ller-Hoissen,
J. M.  Pallo, and J. Stasheff, Associahedra, Tamari Lattices and Related Structures: Tamari Memorial Festschrift, Springer Basel, (2012). 


\bibitem{DS} G. Drozdowski and D. Simson,  \textit{Remarks on posets of finite representation type.} (unpublished?)  Accessed at: http://www-users.mat.umk.pl/~simson/DrozdowskiSimson1978.pdf

\bibitem{Hap} D. Happel, \textit{Triangulated Categories in the Representation Theory of Finite-Dimensional Algebras,} London
Math. Soc. Lecture Note Ser. \textbf{119}, Cambridge University Press, Cambridge, 1988.

\bibitem{HV} D. Happel, D. Vossieck, \textit{Minimal algebras of infinite representation type with preprojective component}, Manuscripta Math. \textbf{42}, (1983), 221--243.

\bibitem{HI} M. Herschend and O. Iyama, \textit{$n$-representation-finite algebras and twisted fractionally Calabi-Yau algebras,}
Bull. Lond. Math. Soc. \textbf{43} (2011), 449--466. 

\bibitem{KLM} D. Kussin, H. Lenzing, and H. Meltzer, \textit{Triangle singularities, ADE-chains, and weighted projective lines}, Advances in Mathematics, \textbf{237}, (2013), 194--251.

\bibitem{Lad} S. Ladkani, \textit{Which canonical algebras are derived equivalent to incidence algebras of posets} Comm. Algebra \textbf{36} (2008), 4599--4606.

\bibitem{Lou} M. Loupias, \textit{Indecomposable representations of finite ordered sets}. Representations of algebras (Proc. Internat. Conf., Carleton Univ., Ottawa, Ont., 1974), pp. 201-209. Lecture Notes in Math., Vol. 488, Springer, Berlin, (1975).

\bibitem{Mit1} B. Mitchell, \textit{Theory of categories}. Pure and Applied Mathematics, Vol. XVII Academic Press, New York-London, (1965).

\bibitem{Mit2} B. Mitchell, \textit{On the dimension of objects and categories. II. Finite ordered sets}, J. Algebra \textbf{9} (1968), 341--368.

\bibitem{Sch} S. Scherotzke, \textit{Finite and bounded Auslander-Reiten components in the derived category,} J. Pure Appl. Algebra \textbf{215} (2011), 232--241.

\bibitem{SS} D. Simson, A. Skowro\'nski, \textit{Elements of the Representation Theory of Associative Algebras 2: Tubes and Concealed Algebras of Euclidean Type}, Cambridge University Press, (2007).

\bibitem{Web1} P.J. Webb, \textit{An introduction to the representations and cohomology of categories,}
pp. 149-173 in: M. Geck, D. Testerman and J. Th\'evenaz (eds.), Group Representation Theory, EPFL Press (Lausanne) (2007). 

\bibitem{Web2} P.J. Webb, \textit{Bilinear forms on Grothendieck groups of triangulated categories,} preprint.

\end{thebibliography}
\end{document}